\newtheorem{fed}{Definition}[section]
\newtheorem{teo}[fed]{Theorem}
\newtheorem{cor}[fed]{Corollary}
\newtheorem*{teo*}{Theorem}
\newtheorem*{proof*}{Proof of Proposition 4.2}
\newtheorem{lem}[fed]{Lemma}
\newtheorem{prop}[fed]{Proposition}
\newtheorem{defi}[fed]{Definition}
\newtheorem{rem}[fed]{Remark}
\newcommand*{\Nwarrow}{\rotatebox[origin=c]{135}{\(\Longleftarrow\)}}
\newcommand*{\Swarrow}{\rotatebox[origin=c]{225}{\(\Longleftarrow\)}}
\def\bdem{\begin{proof}}
\def\edem{\renewcommand{\qed}{\hfill $\blacksquare$}
\end{proof}}
\def\cA{\mathcal{A}}
\def\cB{\mathcal{B}}
\def\cH{\mathcal{H}}
\def\cK{\mathcal{K}}
\def\QQ{[\cQ\cQ]_T}
\def\C{\mathbb{C}}
\def\cW{\mathcal{W}}
\def\ese{\mathcal{S}}
\def\ete{\mathcal{T}}
\def\cD{\mathcal{D}}
\def\cP{\mathcal{P}}
\def\cQ{\mathcal{Q}}
\newcommand{\pint}[1]{\displaystyle \left \langle #1 \right\rangle}
\begin{document}

\title{On partial orders of operators}

\author{M. Laura Arias$^{1,2}$ $^{a}$ and Alejandra Maestripieri$^{1,2}$ $^{b}$  \footnote{M. Laura Arias was partially supported by FONCYT (PICT 2017-
0883) and UBACyT (20020190100330BA). Alejandra Maestripieri was partially supported by CONICET PIP 0168}}
\date{}
\maketitle

\

{\sl {AMS Classification:}} { 06A06, 47A05}

{\fontsize {10}{10} \selectfont {{\sl {Keywords:}} {operator orders, generalized inverses, projections, Schur's complement}}}

\

\begin{abstract} 
	Characterizations of the star, minus and diamond orders of operators are given in various contexts and the relationship between these orders is made more transparent. Moreover, we introduce a new partial order of operators which provides a unified scenario for studying the other three orders. 
\end{abstract}

\section{Introduction}

This article presents a study of partial orders on the set $L(\cH,\cK)$ of bounded linear operators between Hilbert spaces $\cH$ and $\cK$. It focuses on three partial orders: the star order ($\overset{*}{\leq}$), the minus order ($\overset{-}{\leq}$) and the diamond order ($\overset{\diamond}{\leq}$). All these orders emerged as generalizations, in different senses, of the well-known L\"owner partial order. Recall that, given $A,B\in L(\cH),$ $A\leq B$ with respect to the L\"owner partial order if and only if $B-A$ is a positive semi-definite operator. The star order, defined by Drazin in \cite{MR486234}, then coincides with the L$\ddot{\text o}$wner order when restricted to the set of orthogonal projections. It emerged as a natural generalization of this fact for $L(\cH).$  On the other side, the minus order was introduced independently by Hartwig \cite{MR571255} and  by Nambooripad \cite{MR620922}. It replaces the adjoints operations  involved in the star order by inner inverses. Finally, another natural generalization of the star order is the diamond partial order, introduced on $\C^{n\times n}$ by Baksalary and Hauke \cite{MR1048800} when studying a new version of Cochran's theorem.  The diamond order was later extended in a natural way to operators acting on Hilbert spaces.
  
Several characterizations of the previous orders are scattered in the literature. However, a unified overview of this topic affording a clearer understanding of the behaviour of these orders and their relationship is missing. Our purpose here is to, in a clear and orderly way, characterize these orders in the contexts of matrix representations, inner inverses, operators ranges and operator equations. Moreover, we introduce a new partial order for operators, the plus order ($\overset{+}{\leq}$), which connects the three orders:

\begin{center}
$ \begin{matrix} & & A\overset{-}{\leq} B\\
& \Swarrow & &\Nwarrow \\
 A\overset{*}{\leq} B& & & & A\overset{+}{\leq} B  \\ 
&\Nwarrow & & \Swarrow\\
& & A\overset{\diamond}{\leq} B & 
\end{matrix}$
\end{center}

The plus order then provides a unified scenario for the study of the three orders. Characterizations of this order are given in terms of matrix representations and inner inverses.  Moreover, we observe that the previous orders can also be related to the notion of the bilateral shorted operator, a concept introduced in \cite{MR2214409} with the aim of extending to arbitrary operators in Hilbert spaces and pair of subspaces,  the Schur complement which was initially defined for positive operators and fixed subspace by Krein in \cite{MR0024575}. The bilateral shorted operator plays a role with operators related by the minus and diamond orders. This overview is concluded by studying the relation of the diamond and plus orders to certain factorizations of operators, including the polar decomposition and products of projections. If  $\cA,\cB$ are subsets of $L(\cH, \cK)$, $\cA\cB=\{AB: A\in\cA \; \text{and} \; B\in \cB\}$, then partial orderings on $\cA$ and $\cB$  induce a partial ordering on the whole set. With this approach we observe, among other results, that the  plus order on products of projections is equivalent to the minus order on the factors. 

The paper is ordered as follows. Section \ref{Preli} is devoted to recalling the definitions of the star, minus and diamond orders of operators, as well as some other concepts used throughout the paper. In Section \ref{matrix} we present an exhaustive description of these orders by means of matrix representations. Here, we explore the relationship between the minus and diamond orders with the bilateral shorted operator. Then, we introduce the plus order and describe it through matrix representations. Section \ref{inverses} is entirely devoted to studying the relationship between all of these orders and inner inverses, while Section \ref{rangos} studies their relationship with ranges and operator equations. Finally, in Section \ref{facto} we investigate the behaviour of the diamond and plus orders on certain factorizations involving projections and the polar decomposition.

\section{Preliminaries}\label{Preli}
In this article $\cH,\cK$ denote complex Hilbert spaces and $L(\cH, \cK)$ the set of bounded linear operators from $\cH$ to $\cK.$ When $\cH = \cK$ we write, for short,
$L(\cH)$. By $L(\cH)^+$ we  mean the set of positive semi-definite operators; i.e., $T\in L(\cH)^+$ if $\left\langle Tx,x\right\rangle\geq 0$ for all $x\in\cH.$ By $|T|$ we denote the modulus of $T$; i.e., $|T|=(T^*T)^{1/2}.$ The range of $T\in L(\cH,\cK)$ is written $R(T)$ and its nullspace $N(T)$. In addition,  $T^\dagger$ is the (possibly unbounded) Moore-Penrose inverse of $T\in L(\cH, \cK)$. The direct sum of two subspaces $\ese$ and $\ete$ of $\cH$ is expressed by $\ese\overset{.}{+}\ete$, and $\ese\oplus\ete$ means that the sum is orthogonal. We write $\cQ=\{Q\in L(\cH): Q^2=Q\}$ for the set of projections and $\cP=\{P\in L(\cH): P^2=P^*=P\}$ for the set of orthogonal projections. Given $T\in L(\cH,\cK)$, $P_T$ stands for the orthogonal projection onto $\overline{R(T)}.$ Throughout, we write $A\overset{s}{\leq} B$ whenever $R(A)\subseteq R(B)$ and $R(A^*)\subseteq R(B^*).$  The relationship $\overset{s}{\leq}$ defines a pre-order on $L(\cH, \cK)$, known as the \textit{space pre-order} on $L(\cH, \cK).$

Let us define the partial orders on $L(\cH,\cK)$ that we shall study in the article.

\begin{defi}
Let $A,B\in L(\cH, \cK)$. Write
\begin{enumerate}
\item $A$ $^*\hspace{-5pt}\leq B$ if $A^*A=A^*B$ and $R(A)\subseteq R(B)$; the left star order. A similar definition for the right star order ($\leq^*$). 
\item $A\overset{*}{\leq} B$ if $A^*A=A^*B$ and $AA^*=BA^*$; the star order.  
\item $A\overset{\diamond}{\leq} B$ if $A\overset{s}{\leq} B$ and $AA^*A=AB^*A$; the diamond order.
\item $A\overset{-}{\leq}B$  if there are projections $\tilde{Q}$ and $Q$ such that $A = \tilde{Q} B=BQ;$ the minus order. 
\end{enumerate}
\end{defi}
The following lemma collects some basic well-known properties of these orders. The proof is straightforward. 

\begin{lem} Let $A,B\in L(\cH, \cK)$. The following statements hold:

\begin{enumerate}
\item  $A^*\hspace{-5pt}\leq B$ if and only if $A=P_A B$ and $R(A)\subseteq R(B)$. Similarly, $A \leq^* B$ if and only if $A=BP_{A^*}$ and $R(A^*)\subseteq R(B^*)$.
\item  $A\overset{*}{\leq} B$ if and only $A=P_A B=BP_{A^*}.$
\item $AA^*A=AB^*A$ if and only if  and $P_A BP_{A^*}=A.$
\item If $A\overset{-}{\leq}B$ then the ranges of the projections $\tilde{Q}$ and $Q^*$ such that $A = \tilde{Q} B=BQ$ can be fixed so that $R(\tilde{Q})=\overline{R(A)}$ and $R(Q^*)=\overline{R(A^*)}.$
\end{enumerate}
\end{lem}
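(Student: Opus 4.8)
The plan is to obtain all three items by elementary manipulations with orthogonal projections, relying throughout on the identities $P_AA=A$, $A^*P_A=A^*$, $AP_{A^*}=A$, $P_{A^*}A^*=A^*$, on $N(A^*)=R(A)^\perp$ and $N(A)=R(A^*)^\perp$, and on the trivial equivalences: for $C\in L(\cH,\cK)$ and a subspace $\cS$, $C$ vanishes on $\cS$ iff $\cS\subseteq N(C)$; and $R(D)\subseteq R(A)^\perp$ iff $P_AD=0$ (since $N(P_A)=R(A)^\perp$), and symmetrically on the other side with $P_{A^*}$.

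For item (1) I would read $A^*A=A^*B$ as $A^*(B-A)=0$, hence as $R(B-A)\subseteq N(A^*)=R(A)^\perp$, hence as $P_A(B-A)=0$, i.e. $P_AB=P_AA=A$; every step is an equivalence. Passing to adjoints, $AA^*=BA^*$ becomes $A(A^*-B^*)=0$, i.e. $R(A^*-B^*)\subseteq N(A)=R(A^*)^\perp$, i.e. $P_{A^*}(A^*-B^*)=0$, i.e. $A=BP_{A^*}$; conjoining the two equivalences proves (1). Item (2) is the same device used twice: $AA^*A=AB^*A$ says $A(A^*-B^*)A=0$, equivalently $R((A^*-B^*)A)\subseteq R(A^*)^\perp$, equivalently $P_{A^*}(A^*-B^*)A=0$; taking adjoints this reads $A^*(A-B)P_{A^*}=0$, equivalently $R((A-B)P_{A^*})\subseteq R(A)^\perp$, equivalently $P_A(A-B)P_{A^*}=0$, i.e. $P_ABP_{A^*}=P_AAP_{A^*}=A$. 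I expect no obstacle in (1)--(2): it is pure bookkeeping about which projection absorbs which operator.

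For item (3), start from oblique projections $\tilde Q,Q$ with $A=\tilde QB=BQ$. Idempotency of $\tilde Q$ gives $\tilde QA=\tilde Q(\tilde QB)=\tilde QB=A$, whence (by continuity, since $R(\tilde Q)$ is closed) $\tilde Q$ is the identity on $\overline{R(A)}$, that is $\tilde QP_A=P_A$. I would then set $\tilde Q_1:=P_A\tilde Q$ and verify: $\tilde Q_1B=P_A\tilde QB=P_AA=A$; $R(\tilde Q_1)\subseteq R(P_A)=\overline{R(A)}$; and $\tilde Q_1P_A=P_A\tilde QP_A=P_A$, so $\tilde Q_1$ fixes $\overline{R(A)}$ and hence $R(\tilde Q_1)=\overline{R(A)}$. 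Running the identical argument on the adjoint relation $A^*=Q^*B^*$ (so $Q^*$ fixes $\overline{R(A^*)}$, i.e. $Q^*P_{A^*}=P_{A^*}$, equivalently $P_{A^*}Q=P_{A^*}$) and putting $Q_1:=QP_{A^*}$ yields $BQ_1=BQP_{A^*}=AP_{A^*}=A$ and, likewise, $R(Q_1^*)=\overline{R(A^*)}$. Since $\tilde Q_1B=A=BQ_1$, the pair $\tilde Q_1,Q_1$ witnesses $A\overset{-}{\leq}B$ with the prescribed ranges. The one genuine subtlety, and the step I would be most careful about, is that $\tilde Q_1$ is again idempotent, which a priori a product of two projections need not be: here $\tilde Q_1^2=P_A\tilde QP_A\tilde Q=P_AP_A\tilde Q=\tilde Q_1$ goes through precisely because $\tilde QP_A=P_A$, i.e. because $\tilde Q$ restricts to the identity on the range of $P_A$.
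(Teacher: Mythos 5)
Your proof is correct in all three items. Note that the paper offers no proof of this lemma at all: it is stated as a collection of ``basic well-known properties,'' so there is no argument to compare yours against. Your verification is the standard one and is complete: items (1) and (2) are exactly the right chain of equivalences ($A^*C=0 \Leftrightarrow R(C)\subseteq N(A^*)=R(A)^\perp \Leftrightarrow P_AC=0$, applied on each side), and in item (3) you correctly identify and resolve the only delicate point, namely that $\tilde Q_1=P_A\tilde Q$ is again idempotent because $\tilde Q$ acts as the identity on $\overline{R(A)}$ (which follows from $\tilde QA=A$ together with boundedness of $\tilde Q$), and symmetrically for $Q_1=QP_{A^*}$ via the adjoint relation $Q^*P_{A^*}=P_{A^*}$.
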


We also consider the notion of bilateral shorted operator which relies on the condition of weak complementability introduced in \cite{MR2214409}: 
Given $A\in L(\cH, \cK)$ and  $\ese$ and $\ete$ two closed subspaces of $\cH$ and $\cK$, respectively, consider the matrix representation of $A$  with respect to the decompositions $\cH=\ese\oplus\ese^\bot$ and $\cK=\ete\oplus\ete^\bot$:
\begin{equation}\label{AM}
A=\left(\begin{matrix} b & c \\ d & e\end{matrix}\right)
\end{equation}

\begin{defi}
An operator $A\in L(\cH, \cK)$ with the matrix decomposition as in (\ref{AM}) is called $(\ese,\ete)-$weakly complementable if $R(c^*)\subseteq R(|e|^{1/2})$ and $R(d)\subseteq R(|e^*|^{1/2}).$  In this case, the bilateral shorted operator of $A$ to the subspaces $\ese,\ete$ is defined as:
\begin{equation}
A_{/\ese,\ete}=\left(\begin{matrix} b-g^*f & 0 \\ 0 & 0\end{matrix}\right),
\end{equation}
 where $f=(|e^*|^{1/2}u)^\dagger d$,  $g=(|e|^{1/2})^\dagger c^*,$  and $e=|e^*|u$ is the polar decomposition of $e$.

\end{defi}

The weak complementability condition emerges as a generalization for arbitrary operators in Hilbert spaces of the  fact that if $A$ is a positive operator and $\ese=\ete$ then $R(d)\subseteq R(e^{1/2})$. If the stronger conditions $R(c^*)\subseteq R(e^*)$ and $R(d)\subseteq R(e)$ hold, then $A$ is  $(\ese,\ete)-$complementable. If $A$ is $(\ese,\ete)-$complementable then the matrix representation of $A$ with respect to the decompositions $\cH=\ese\oplus\ese^\bot$ and $\cK=\ete\oplus\ete^\bot$ is:
\begin{equation}\label{Amtxcomp}
A=\left(\begin{matrix} b & ye \\ ex & e\end{matrix}\right),
\end{equation}
for some $x\in L(\ese, \ese^\bot)$ and $y\in L(\ete^\bot, \ete).$ For details we refer the reader to \cite{MR2214409}.

\begin{lem}\label{shorted} Let $A\in L(\cH, \cK)$ and $\ese$ and $\ete$ be two closed subspaces  of $\cH$ and $\cK$, respectively. If $A$ is $(\ese,\ete)-$complementable and the matrix representation of $A$ with respect to the decompositions $\cH=\ese\oplus\ese^\bot$ and $\cK=\ete\oplus\ete^\bot$ is as in (\ref{Amtxcomp}), then
\begin{equation}
A_{/\ese,\ete}=\left(\begin{matrix} b-yex & 0 \\ 0 & 0\end{matrix}\right).
\end{equation}
\end{lem}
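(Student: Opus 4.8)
The plan is to show that the general definition of the bilateral shorted operator specializes, under the stronger complementability hypothesis, to the simpler formula claimed. We are given that $A$ is $(\ese,\ete)$-complementable, so in particular $A$ is $(\ese,\ete)$-weakly complementable and the general definition applies, giving $A_{/\ese,\ete} = \left(\begin{smallmatrix} b - g^*f & 0 \\ 0 & 0\end{smallmatrix}\right)$ with $f = (|e^*|^{1/2}u)^\dagger d$, $g = (|e|^{1/2})^\dagger c^*$, and $e = |e^*|u$ the polar decomposition. By (\ref{Amtxcomp}), complementability lets us write $c = ye$ and $d = ex$ for suitable $x \in L(\ese,\ese^\bot)$, $y \in L(\ete^\bot,\ete)$. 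So the whole task reduces to verifying the identity $g^*f = yex$; equivalently, since $f$ and $g$ are built from $d = ex$ and $c^* = e^*y^*$, I must show $g^* f = y e x$.

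First I would compute $f$. Writing $e = |e^*| u$ for the polar decomposition (so $u$ is a partial isometry with $N(u) = N(e)$, $R(u) = \overline{R(e^*)}$, and $|e^*| = (ee^*)^{1/2}$), note that $|e^*|^{1/2} u$ has the same range as $|e^*|^{1/2}$ on $\overline{R(e)}$ and adjoint $u^*|e^*|^{1/2}$. Since $d = ex = |e^*| u x$, and the complementability hypothesis $R(d) \subseteq R(|e^*|^{1/2})$ is automatically satisfied here (it follows from $R(d) = R(ex) \subseteq R(e) \subseteq R(|e^*|^{1/2})$), we get $f = (|e^*|^{1/2}u)^\dagger |e^*| u x$. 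Using $|e^*| = |e^*|^{1/2}\cdot|e^*|^{1/2}$ and the fact that $(|e^*|^{1/2}u)^\dagger |e^*|^{1/2}$ acts as $u^*$ (more precisely, as the projection-adjusted inverse), one obtains $f = u^* |e^*|^{1/2} u x$, up to care about ranges and nullspaces. Similarly, $g = (|e|^{1/2})^\dagger c^* = (|e|^{1/2})^\dagger e^* y^* = (|e|^{1/2})^\dagger |e| u^* y^*$ — here using $e^* = u^* |e^*| = |e| u^*$ from the polar decomposition — which simplifies to $g = |e|^{1/2} u^* y^*$, so $g^* = y u |e|^{1/2}$.

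Then I would multiply: $g^* f = y u |e|^{1/2} \cdot u^* |e^*|^{1/2} u x$. The key intertwining relations are $u|e| = |e^*|u$ and $u|e|^{1/2} = |e^*|^{1/2}u$ (functional calculus applied to the polar decomposition: $u f(|e|) = f(|e^*|) u$ for continuous $f$ vanishing at $0$), together with $u u^* = P_{\overline{R(e^*)}}$ acting as the identity on $\overline{R(|e|^{1/2})} = \overline{R(e^*)}$. Thus $u|e|^{1/2} u^* = |e^*|^{1/2} u u^* = |e^*|^{1/2}$ on the relevant subspace, giving $g^* f = y |e^*|^{1/2} |e^*|^{1/2} u x = y |e^*| u x = y e x$, which is exactly what is needed. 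Hence $A_{/\ese,\ete} = \left(\begin{smallmatrix} b - yex & 0 \\ 0 & 0\end{smallmatrix}\right)$.

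The main obstacle is the careful bookkeeping with domains, ranges, and the partial-isometry structure of $u$: the Moore--Penrose inverses $(|e^*|^{1/2}u)^\dagger$ and $(|e|^{1/2})^\dagger$ may be unbounded, so the manipulations "$(|e^*|^{1/2}u)^\dagger |e^*|^{1/2} = u^*$" and "$(|e|^{1/2})^\dagger |e| = |e|^{1/2}$" hold only after restricting to appropriate dense subspaces or modulo projections onto closures of ranges, and one must check that the vectors produced (e.g. $ux$ applied to an element of $\ese$, or the image of $|e^*|^{1/2}$) land where these identities are valid. The cleanest way to handle this is to observe that $R(d) = R(ex)\subseteq R(e)$, so $d$ factors through $R(e)$ where $u^*u$ acts as the identity, and likewise for $c^*$; this makes all the cancellations legitimate. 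Once the range considerations are dispatched, the computation is the routine chain above, and it matches Lemma \ref{shorted}.
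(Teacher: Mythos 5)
Your proposal is correct and follows essentially the same route as the paper: both reduce the claim to the identity $g^*f=yex$ and verify it through the polar decomposition $e=|e^*|u=u|e|$ and the intertwining relation $u|e|^{1/2}=|e^*|^{1/2}u$, the only difference being that the paper factors $c^*=|e|^{1/2}\bigl(|e|^{1/2}u^*y^*\bigr)$ and $d=\bigl(|e^*|^{1/2}u\bigr)\bigl(|e|^{1/2}x\bigr)$ directly to read off $g$ and $f$, rather than manipulating the Moore--Penrose inverses as you do. One small slip: $uu^*$ equals $P_{\overline{R(e)}}$, not $P_{\overline{R(e^*)}}$; the identity you actually need, $u|e|^{1/2}u^*=|e^*|^{1/2}$, still holds because $N(|e^*|^{1/2})=R(e)^{\perp}$, so your computation goes through.
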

\begin{proof}
Assume that $A$ is $(\ese,\ete)-$complementable and let $A=\left(\begin{matrix} b & ye \\ ex & e\end{matrix}\right)$ be the matrix representation of $A$ with respect to the decompositions $\cH=\ese\oplus\ese^\bot$ and $\cK=\ete\oplus\ete^\bot$. Then:
  \begin{center}
 $c^*=e^*y^*=|e|u^*y^*=|e|^{1/2} |e|^{1/2}u^*y^*.$ 
 \end{center}
That is,  $g:=|e|^{1/2}u^*y^*=(|e|^{1/2})^\dagger c^*,$ and $d=ex=|e^*|ux=|e^*|^{1/2}|e^*|^{1/2}ux=|e^*|^{1/2}u|e|^{1/2}x.$ 

That is $f:=|e|^{1/2}x=(|e^*|^{1/2}u)^\dagger d$. Therefore, $b-g^*f=b-yu|e|^{1/2}|e|^{1/2}x=b-yex$ and so $A_{/\ese,\ete}=\left(\begin{matrix} b-yex & 0 \\ 0 & 0\end{matrix}\right).$

\end{proof}

The following concept also plays a role in what follows. 

\begin{defi} Let $B,C\in L(\cH)^+$.  The geometric mean of $B$ and $C$ is the operator:
\begin{equation}
B\# C:=\underset{\leq}{\max}\left\{X\in L(\cH)^+: \left(\begin{matrix} B & X \\ X & C\end{matrix}\right) \in L(\cH\oplus\cH)^+\right\}.\nonumber
\end{equation}
\end{defi}

For $B$ invertible and $T\in L(\cH)$, the algebraic Riccati equation in $X$ is given by:
\begin{equation}\label{Riccati}
 X^*  B^{-1} X -T X - X  T = C.
\end{equation} 
The next lemma gathers some useful properties of $B\# C$ . See  \cite{MR0482378} and \cite{MR2601957} for their proofs.
\begin{lem}\label{propmean} Let $B,C\in L(\cH)^+$. Then:
\begin{enumerate}
\item $B\# C=C\# B.$
\item If $B$ is invertible, then $B\# C=B^{1/2}(B^{-1/2}CB^{-1/2})^{1/2}B^{1/2}.$ 
\item If $B$ is invertible and $BT$ is selfadjoint, then the selfadjoint solution of the algebraic Riccati equation (\ref{Riccati}) is 
\begin{equation}
X = (T^*BT+ C)\# B +BT.\nonumber
\end{equation}
\end{enumerate}

\end{lem}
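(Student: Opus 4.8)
The plan is to derive (1) and (2) directly from the definition of the geometric mean and then reduce (3) to (2) by completing the square. For (1), the point is that every $X\in L(\cH)^{+}$ is selfadjoint, so conjugating by the swap unitary $J=\left(\begin{smallmatrix}0&I\\I&0\end{smallmatrix}\right)$ carries $\left(\begin{smallmatrix}B&X\\X&C\end{smallmatrix}\right)$ to $\left(\begin{smallmatrix}C&X\\X&B\end{smallmatrix}\right)$ and preserves positivity; hence the admissible sets in the definitions of $B\#C$ and $C\#B$ coincide, and therefore so do their $\le$-maxima.

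For (2), assume $B$ invertible. Since $B\ge0$, the Schur complement test gives $\left(\begin{smallmatrix}B&X\\X&C\end{smallmatrix}\right)\ge0\iff C-XB^{-1}X\ge0$. Setting $Y=B^{-1/2}XB^{-1/2}$, which is an order isomorphism of $L(\cH)^{+}$ onto itself, this becomes $Y^{2}\le Z$ with $Z:=B^{-1/2}CB^{-1/2}$. Operator monotonicity of $t\mapsto t^{1/2}$ upgrades $Y^{2}\le Z$ to $Y\le Z^{1/2}$, and $Y=Z^{1/2}$ is itself admissible, so $Z^{1/2}$ is the largest admissible $Y$; conjugating back, $B^{1/2}(B^{-1/2}CB^{-1/2})^{1/2}B^{1/2}$ is the largest admissible $X$, i.e.\ $B\#C$. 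The same computation, inserting $B^{1/2}B^{-1}B^{1/2}=I$, records the identity $(B\#C)B^{-1}(B\#C)=C$ and, more generally, shows that $Y=B\#D$ is the positive solution of $YB^{-1}Y=D$ for $D\in L(\cH)^{+}$.

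For (3), assume $B$ invertible and $BT$ selfadjoint, so that $T^{*}B=BT$ and $BTB^{-1}=T^{*}$. I would substitute $Y=X-BT$; using $B^{-1}(BT)=T$, $BTB^{-1}=T^{*}$ and $(BT)^{*}=BT$, a short expansion rewrites (\ref{Riccati}) in the form $Y^{*}B^{-1}Y=T^{*}BT+C$. For a selfadjoint $X$ the operator $Y$ is selfadjoint, so by (1) and (2) the relevant solution is $Y=B\#(T^{*}BT+C)=(T^{*}BT+C)\#B$; adding back $BT$ gives $X=(T^{*}BT+C)\#B+BT$, which is selfadjoint as a sum of a positive and a selfadjoint operator, and reversing the steps confirms that this $X$ solves (\ref{Riccati}).

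I expect the only genuinely delicate step to be the attainment claim in (2): one must know both that $\{Y\ge0:Y^{2}\le Z\}$ has a $\le$-largest element and that this element is exactly $Z^{1/2}$, and it is precisely here that operator monotonicity of the square root is used; everything else is formal algebra. In (3) the one thing to check carefully is that the hypothesis ``$BT$ selfadjoint'' is exactly what makes the cross terms produced by the substitution $Y=X-BT$ collapse, so I would run that expansion slowly.
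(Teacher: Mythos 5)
The paper itself does not prove this lemma; it refers the reader to \cite{MR0482378} and \cite{MR2601957}. Your arguments for items 1 and 2 are the standard ones from those sources and are correct: conjugation by the swap unitary identifies the two admissible sets, and for invertible $B$ the Schur-complement criterion $C-XB^{-1}X\geq 0$, the congruence $Y=B^{-1/2}XB^{-1/2}$, and operator monotonicity of $t\mapsto t^{1/2}$ identify the maximum. The identity $(B\#C)B^{-1}(B\#C)=C$ you extract along the way is also correct and is what item 3 really needs.

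The step you explicitly deferred in item 3 is where the argument breaks for the equation \emph{as printed}. Write $X=G+BT$ with $G:=(T^*BT+C)\#B$, so $GB^{-1}G=T^*BT+C$, and use $BTB^{-1}=T^*$ and $BT\cdot T=T^*BT$ to get
\begin{equation}
XB^{-1}X=GB^{-1}G+GT+T^*G+T^*BT=C+2T^*BT+GT+T^*G.\nonumber
\end{equation}
The cross terms produced are $T^*G+GT$, so the substitution cancels $T^*X+XT$, not $TX+XT$; one finds $XB^{-1}X-TX-XT=C+(T^*-T)X$. Hence the proposed $X$ solves (\ref{Riccati}) as displayed only when $(T^*-T)X=0$, and your claim that the expansion yields $Y^*B^{-1}Y=T^*BT+C$ is what holds for the Riccati equation in the form $X^*B^{-1}X-T^*X-X^*T=C$ --- the form treated in \cite{MR2601957} and the one the paper actually uses later (there $T=-\tfrac{1}{2}I$ is scalar, so the distinction is invisible). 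So your method is the right one and proves the intended statement, but the hypothesis ``$BT$ selfadjoint'' does not by itself collapse the cross terms for the displayed equation: either the equation should read $X^*B^{-1}X-T^*X-X^*T=C$, or one must additionally assume $T=T^*$.
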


\section{Operator orders and matrix representations}\label{matrix}

Consider the following operator block matrices of $A$ and $B$ with respect to the decompositions $\cH=\overline{R(A^*)}\oplus N(A)$ and $\cK=\overline{R(A)}\oplus N(A^*):$ 
\begin{equation}\label{Amatrix}
A=\left(\begin{matrix} a & 0 \\ 0 & 0\end{matrix}\right) \; \textrm{and} \; B=\left(\begin{matrix} b_{11} & b_{12} \\ b_{21} & b_{22}\end{matrix}\right).
\end{equation}
In \cite{MR2737252} and \cite{MRD}, the matrix representation of $B$ is described when   $A\overset{*}{\leq} B$ or $A\overset{-}{\leq}B.$ However, the matrix representation of $B$ when $A\overset{\diamond}{\leq}B$ is missing. Here, we study this problem and we relate it with the notion of bilateral shorted operator.

For completeness of this overview, we include the results for the star and minus orders:

\begin{prop}\label{left*}\label{order**}
Let $A,B\in L(\cH,\cK)$. 
\begin{enumerate}
\item $A$ $^*\hspace{-5pt}\leq B$ if and only if $B=\left(\begin{matrix} a & 0 \\ b_{22}x & b_{22}\end{matrix}\right)$ for some $x\in L(\overline{R(A^*)}, N(A))$.
\item $A \leq^* B$ if and only  if there exists $y\in L(N(A^*), \overline{R(A)})$ such that $B=\left(\begin{matrix} a & yb_{22} \\ 0 & b_{22}\end{matrix}\right)$.
\item $A\overset{*}{\leq} B$ if and only if $B=\left(\begin{matrix} a & 0 \\ 0 & b_{22}\end{matrix}\right).$
\end{enumerate}
\end{prop}
\begin{proof} See \cite{MR2737252} and \cite{MRD}.

\end{proof}

\begin{prop} \label{minusmatrix}
Let $A,B\in L(\cH,\cK)$. The following conditions are equivalent:
\begin{enumerate}
\item $A\overset{-}{\leq}B;$ 
\item $B=\left(\begin{matrix} a+yb_{22}x & yb_{22} \\ b_{22}x & b_{22}\end{matrix}\right),$ for some $y\in L(N(A^*), \overline{R(A)})$ and $x\in L(\overline{R(A^*)}, N(A)).$
\end{enumerate}
\end{prop}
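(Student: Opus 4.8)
The plan is to prove both implications by translating the operator-equation definition of the minus order into conditions on the block entries $b_{11},b_{12},b_{21},b_{22}$ of $B$, using the two-sided factorization $A=\tilde Q B=BQ$ with the ranges of the projections fixed as in Lemma 1.2(3).

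First I would prove $(1)\Rightarrow(2)$. Assume $A\overset{-}{\leq}B$. By Lemma 1.2(3) we may choose projections $\tilde Q,Q$ with $A=\tilde Q B=BQ$, $R(\tilde Q)=\overline{R(A)}$ and $R(Q^*)=\overline{R(A^*)}$. Since $R(\tilde Q)=\overline{R(A)}$, in the decomposition $\cK=\overline{R(A)}\oplus N(A^*)$ the projection $\tilde Q$ has the block form $\tilde Q=\left(\begin{smallmatrix}1 & y\\ 0 & 0\end{smallmatrix}\right)$ for some $y\in L(N(A^*),\overline{R(A)})$, because a projection onto a subspace $\mathcal M$ is the identity on $\mathcal M$ and maps everything into $\mathcal M$. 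Dually, $Q^*$ is a projection onto $\overline{R(A^*)}$, so in the decomposition $\cH=\overline{R(A^*)}\oplus N(A)$ one gets $Q^*=\left(\begin{smallmatrix}1 & x^*\\ 0 & 0\end{smallmatrix}\right)$ for some $x\in L(\overline{R(A^*)},N(A))$, hence $Q=\left(\begin{smallmatrix}1 & 0\\ x & 0\end{smallmatrix}\right)$. Now I would write $B=\left(\begin{smallmatrix}b_{11}&b_{12}\\ b_{21}&b_{22}\end{smallmatrix}\right)$ and compute the two products. From $\tilde Q B=A=\left(\begin{smallmatrix}a&0\\0&0\end{smallmatrix}\right)$ we get $b_{11}+yb_{21}=a$ and $b_{12}+yb_{22}=0$, i.e. $b_{12}=-yb_{22}$; from $BQ=A$ we get $b_{11}+b_{12}x=a$ and $b_{21}+b_{22}x=0$, i.e. $b_{21}=-b_{22}x$. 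Substituting $b_{12}=-yb_{22}$, $b_{21}=-b_{22}x$ into $b_{11}+yb_{21}=a$ (or into $b_{11}+b_{12}x=a$) gives $b_{11}=a+yb_{22}x$. Replacing $y\mapsto -y$, $x\mapsto -x$ (harmless relabelling) yields exactly the matrix in (2).

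For $(2)\Rightarrow(1)$ the plan is simply to exhibit the witnessing projections. Given $B=\left(\begin{smallmatrix}a+yb_{22}x & yb_{22}\\ b_{22}x & b_{22}\end{smallmatrix}\right)$, set $\tilde Q=\left(\begin{smallmatrix}1 & -y\\0 & 0\end{smallmatrix}\right)$ and $Q=\left(\begin{smallmatrix}1 & 0\\ -x & 0\end{smallmatrix}\right)$. One checks directly that $\tilde Q^2=\tilde Q$ and $Q^2=Q$, so both lie in $\cQ$, and a one-line block multiplication gives $\tilde Q B=\left(\begin{smallmatrix}a+yb_{22}x-yb_{22}x & yb_{22}-yb_{22}\\0&0\end{smallmatrix}\right)=\left(\begin{smallmatrix}a&0\\0&0\end{smallmatrix}\right)=A$ and likewise $BQ=A$. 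Hence $A\overset{-}{\leq}B$.

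The only real subtlety — and the step I would treat most carefully — is the normalization of the two projections in the forward direction: the definition of the minus order only asserts the existence of \emph{some} oblique projections, and an arbitrary such pair need not have the block form above. This is precisely why Lemma 1.2(3) is invoked, allowing us to fix $R(\tilde Q)=\overline{R(A)}$ and $R(Q^*)=\overline{R(A^*)}$; once the ranges are pinned down, the block structure $\left(\begin{smallmatrix}1&*\\0&0\end{smallmatrix}\right)$ is forced and the rest is bookkeeping. A secondary point worth a remark is that boundedness of $x$ and $y$ is automatic here since they are blocks of the bounded operators $Q$ and $\tilde Q$ (equivalently $B$), so the statement stays within $L(\cH,\cK)$ with no closure or domain issues.
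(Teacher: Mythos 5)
Your proposal is correct and follows essentially the same route as the paper: normalize the projections via Lemma 1.2(3), read off their block forms $\left(\begin{smallmatrix}1&*\\0&0\end{smallmatrix}\right)$ and $\left(\begin{smallmatrix}1&0\\ *&0\end{smallmatrix}\right)$, and extract the relations $b_{12}=yb_{22}$, $b_{21}=b_{22}x$, $b_{11}=a+yb_{22}x$ from $\tilde Q B=A=BQ$, with the converse done by exhibiting the same projections. The only difference is cosmetic: the paper reaches these relations by computing $P_ABP_{A^*}$ and $P_AB(I-P_{A^*})$ and dualizing, while you multiply the blocks directly, which is if anything cleaner.
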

\begin{proof} See \cite{MRD}.

\end{proof}

\begin{prop}\label{diamond3}
Let $A,B\in  L(\cH, \cK)$. The following conditions are equivalent:
\begin{enumerate}
\item $A\overset{\diamond}{\leq}B.$
\item $b_{11}=a$, $R(b_{21})\subseteq R(B)$ and $R(b_{12}^*)\subseteq R(B^*).$
\end{enumerate}
\end{prop}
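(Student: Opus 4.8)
The plan is to unwind the definition of the diamond order directly against the fixed matrix representations in \eqref{Amatrix}. Recall $A\overset{\diamond}{\leq}B$ means $A\overset{s}{\leq}B$ (that is, $R(A)\subseteq R(B)$ and $R(A^*)\subseteq R(B^*)$) together with $AA^*A=AB^*A$. By Lemma (part 2) the equation $AA^*A=AB^*A$ is equivalent to $P_ABP_{A^*}=A$. So the entire content of $A\overset{\diamond}{\leq}B$ is: $P_ABP_{A^*}=A$, $R(A)\subseteq R(B)$, and $R(A^*)\subseteq R(B^*)$. The first task is to translate $P_ABP_{A^*}=A$ into a condition on the block entries. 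With respect to $\cH=\overline{R(A^*)}\oplus N(A)$ and $\cK=\overline{R(A)}\oplus N(A^*)$, the projection $P_A$ is $\left(\begin{smallmatrix}1&0\\0&0\end{smallmatrix}\right)$ on $\cK$ and $P_{A^*}$ is $\left(\begin{smallmatrix}1&0\\0&0\end{smallmatrix}\right)$ on $\cH$; hence $P_ABP_{A^*}=\left(\begin{smallmatrix}b_{11}&0\\0&0\end{smallmatrix}\right)$, and equating with $A=\left(\begin{smallmatrix}a&0\\0&0\end{smallmatrix}\right)$ gives precisely $b_{11}=a$ and nothing else. So $P_ABP_{A^*}=A \iff b_{11}=a$.

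Next I would handle the two range inclusions. The claim is that, \emph{given} $b_{11}=a$, the condition $R(A)\subseteq R(B)$ is equivalent to $R(b_{21})\subseteq R(B)$, and symmetrically $R(A^*)\subseteq R(B^*)$ is equivalent to $R(b_{12}^*)\subseteq R(B^*)$. For the first: $R(A)=R\!\left(\begin{smallmatrix}a&0\\0&0\end{smallmatrix}\right)=\overline{R(a)}$ sits inside the first summand $\overline{R(A)}$, while the first column of $B$ is $\left(\begin{smallmatrix}a\\b_{21}\end{smallmatrix}\right)=\left(\begin{smallmatrix}b_{11}\\b_{21}\end{smallmatrix}\right)$, whose range lies in $R(B)$. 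I would argue that $R(A)\subseteq R(B)$ holds iff the range of the first column of $B$ is contained in $R(B)$ together with $R(A)$, which is automatic for the column, and then split off the $b_{21}$ part: since $\left(\begin{smallmatrix}a\\b_{21}\end{smallmatrix}\right)\in R(B)$ always, $R(A)\subseteq R(B)$ (i.e. $\left(\begin{smallmatrix}a\\0\end{smallmatrix}\right)$-directions in $R(B)$) is equivalent to $\left(\begin{smallmatrix}0\\b_{21}\end{smallmatrix}\right)\in R(B)$, i.e. $R(b_{21})\subseteq R(B)$. Concretely: $\left(\begin{smallmatrix}ax\\0\end{smallmatrix}\right)=\left(\begin{smallmatrix}b_{11}x\\b_{21}x\end{smallmatrix}\right)-\left(\begin{smallmatrix}0\\b_{21}x\end{smallmatrix}\right)$, and the first term is in $R(B)$; so $R(A)\subseteq R(B)$ iff every $\left(\begin{smallmatrix}0\\b_{21}x\end{smallmatrix}\right)\in R(B)$, which since $R(a)$ is dense in $\overline{R(A^*)}$ (the first summand of $\cH$) and $b_{21}$ acts on that summand, is exactly $R(b_{21})\subseteq R(B)$. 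Care is needed with closures here; I would phrase the argument at the level of exact ranges $R(\cdot)$ (not closures), noting $R\!\left(\begin{smallmatrix}a&0\\0&0\end{smallmatrix}\right)$ equals $R(a)\oplus 0$ and that $R(a)$ may be a proper dense subspace of $\overline{R(A^*)}$ — but $b_{21}$ restricted to $R(a)$ has the same range closure issues as $b_{21}$ on all of $\overline{R(A^*)}$ only if one is slightly careful. The cleanest route is: $R(A)\subseteq R(B)$ $\iff$ for all $\xi\in\overline{R(A^*)}$, $\left(\begin{smallmatrix}a\xi\\0\end{smallmatrix}\right)\in R(B)$; but $\left(\begin{smallmatrix}a\xi\\b_{21}\xi\end{smallmatrix}\right)\in R(B)$ trivially, so this is $\iff$ $\left(\begin{smallmatrix}0\\b_{21}\xi\end{smallmatrix}\right)\in R(B)$ for all $\xi$, i.e. $R(b_{21})\subseteq R(B)$, where I use $b_{11}=a$ to identify the first component.

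The symmetric statement $R(A^*)\subseteq R(B^*)\iff R(b_{12}^*)\subseteq R(B^*)$ follows by applying the same argument to $A^*$ and $B^*$, whose block representations with respect to $\cH=\overline{R(A)}\oplus N(A^*)$... wait, more precisely to the decompositions obtained by swapping the roles, are $A^*=\left(\begin{smallmatrix}a^*&0\\0&0\end{smallmatrix}\right)$ and $B^*=\left(\begin{smallmatrix}b_{11}^*&b_{21}^*\\b_{12}^*&b_{22}^*\end{smallmatrix}\right)$, so the $(2,1)$-block of $B^*$ is $b_{12}^*$, and $b_{11}=a$ gives $b_{11}^*=a^*$. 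Putting the three equivalences together yields: $A\overset{\diamond}{\leq}B$ $\iff$ $b_{11}=a$, $R(b_{21})\subseteq R(B)$, and $R(b_{12}^*)\subseteq R(B^*)$, which is exactly statement (2). The main obstacle I anticipate is the bookkeeping with non-closed ranges in the column-splitting step — one must be scrupulous that "$R(A)\subseteq R(B)$" is a statement about genuine (not closed) ranges and that subtracting the in-$R(B)$ column vector is legitimate, which it is since $R(B)$ is a linear subspace; and one must correctly match which off-diagonal block becomes $b_{21}$ versus $b_{12}^*$ after passing to adjoints. Everything else is a direct computation once Lemma (part 2) is invoked for the $AA^*A=AB^*A$ reformulation.
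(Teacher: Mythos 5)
Your proposal is correct and follows essentially the same route as the paper: Lemma~2.2(2) reduces $AA^*A=AB^*A$ to $P_ABP_{A^*}=A$, i.e.\ $b_{11}=a$, and then the pointwise identity $Bx=ax+b_{21}x$ for $x\in\overline{R(A^*)}$ (your column-splitting step) gives $R(A)\subseteq R(B)\iff R(b_{21})\subseteq R(B)$, with the adjoint case handled symmetrically. The only quibble is the passing assertion $R(A)=\overline{R(a)}$ (it is $R(a)$, without closure), but your final argument works with exact ranges element by element, exactly as the paper does, so nothing is affected.
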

\begin{proof}
Assume that $b_{11}=a$. Hence, for any $x\in \overline{R(A^*)}$, $Bx=ax+b_{21}x,$ and so $R(A)\subseteq R(B)$ if and only if $R(b_{21})\subseteq R(B).$
In the same way, using that $B^*x=a^*x+b_{12}^*x$ for any $x\in \overline{R(A)},$ we get that $R(A^*)\subseteq R(B^*)$ if and only if $R(b_{12}^*)\subseteq R(B^*).$
\end{proof}

The matrix representations for the minus and diamond orders are connected to the notion of bilateral shorted operator.

\begin{teo}\label{minuscomplementable}
Let $A, B\in L(\cH, \cK)$. The following conditions are equivalent:
\begin{enumerate}   
\item $A\overset{-}{\leq} B;$ 
\item $B$ is $(\overline{R(A^*)}, \overline{R(A)})-$complementable and  
$$B_{/\overline{R(A^*)}, \overline{R(A)}}=A.$$
\end{enumerate}
\end{teo}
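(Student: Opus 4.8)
The plan is to prove the equivalence by translating the minus order into the matrix description already obtained in Proposition \ref{minusmatrix} and then matching that description against the definition of weak complementability and the formula for the bilateral shorted operator. Throughout we work with the block decomposition of $B$ with respect to $\cH=\overline{R(A^*)}\oplus N(A)$ and $\cK=\overline{R(A)}\oplus N(A^*)$, so that in the notation of the definition of weak complementability (with $\ese=\overline{R(A^*)}$, $\ete=\overline{R(A)}$) we have $b=b_{11}$, $c=b_{12}$, $d=b_{21}$ and $e=b_{22}$.

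For the implication $(1)\Rightarrow(2)$, assume $A\overset{-}{\leq}B$. By Proposition \ref{minusmatrix} there are $y\in L(N(A^*),\overline{R(A)})$ and $x\in L(\overline{R(A^*)},N(A))$ with $b_{11}=a+yb_{22}x$, $b_{12}=yb_{22}$ and $b_{21}=b_{22}x$. Writing the polar decomposition $b_{22}=|b_{22}^*|u$ and using $|b_{22}|=u^*|b_{22}^*|u$ as in the proof of Lemma \ref{shorted}, I would show that $b_{21}=b_{22}x$ forces $R(b_{21})\subseteq R(|b_{22}^*|^{1/2})$ and, symmetrically, $b_{12}=yb_{22}$ (i.e.\ $b_{12}^*=b_{22}^*y^*$) forces $R(b_{12}^*)\subseteq R(|b_{22}|^{1/2})$; this is exactly the $(\overline{R(A^*)},\overline{R(A)})$-weak complementability of $B$. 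In fact this is the situation of a complementable operator: the factorizations $b_{21}=b_{22}x=(|b_{22}^*| u)x = |b_{22}^*|^{1/2}(|b_{22}^*|^{1/2}ux)$ and $b_{12}^*=b_{22}^*y^* = (|b_{22}| u^*)y^* = |b_{22}|^{1/2}(|b_{22}|^{1/2}u^*y^*)$ identify, by the uniqueness of the reduced solution in Douglas' lemma, the operators $f=(|b_{22}^*|^{1/2}u)^\dagger b_{21}=|b_{22}|^{1/2}x$ and $g=(|b_{22}|^{1/2})^\dagger b_{12}^* = |b_{22}|^{1/2}u^*y^*$, exactly as in Lemma \ref{shorted}. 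Hence $g^*f = yu|b_{22}|^{1/2}|b_{22}|^{1/2}x = y b_{22} x$, and therefore the $(1,1)$ block of $B_{/\overline{R(A^*)},\overline{R(A)}}$ equals $b_{11}-g^*f = (a+yb_{22}x)-yb_{22}x = a$, while the other blocks are $0$; that is, $B_{/\overline{R(A^*)},\overline{R(A)}}=A$.

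For the converse $(2)\Rightarrow(1)$, assume $B$ is $(\overline{R(A^*)},\overline{R(A)})$-complementable (the weak complementability condition plus the assumption that the shorted operator equals $A$ should, after the computation below, force the stronger complementable form) and $B_{/\overline{R(A^*)},\overline{R(A)}}=A$. From the definition, $f=(|b_{22}^*|^{1/2}u)^\dagger b_{21}$ and $g=(|b_{22}|^{1/2})^\dagger b_{12}^*$ satisfy $|b_{22}^*|^{1/2}u f = b_{21}$ and $|b_{22}|^{1/2} g = b_{12}^*$; setting $x:=u^* |b_{22}^*|^{-1/2}\!\restriction\! b_{21}$-type expressions, more precisely taking $x$ and $y$ so that $b_{21}=b_{22}x$ and $b_{12}=yb_{22}$ (which is where I must argue that weak complementability together with the shorted-operator identity yields genuine complementability, so that the representation \eqref{Amtxcomp} applies and $x,y$ exist), Lemma \ref{shorted} gives $B_{/\overline{R(A^*)},\overline{R(A)}}=\left(\begin{smallmatrix} b_{11}-yb_{22}x & 0\\ 0 & 0\end{smallmatrix}\right)$. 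The hypothesis then yields $b_{11}-yb_{22}x=a$, i.e.\ $b_{11}=a+yb_{22}x$. Thus $B$ has exactly the block form in Proposition \ref{minusmatrix}(2), whence $A\overset{-}{\leq}B$.

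The main obstacle I expect is the passage, in $(2)\Rightarrow(1)$, from mere weak complementability of $B$ to the existence of bona fide factorizations $b_{21}=b_{22}x$ and $b_{12}=yb_{22}$ with bounded $x,y$ — in general weak complementability only gives $R(b_{21})\subseteq R(|b_{22}^*|^{1/2})$, which is weaker than $R(b_{21})\subseteq R(b_{22})$. The resolution should come from the extra hypothesis $B_{/\overline{R(A^*)},\overline{R(A)}}=A$: since $A$ has the form $\left(\begin{smallmatrix} a & 0\\ 0 & 0\end{smallmatrix}\right)$ with $a$ invertible on $\overline{R(A^*)}$, the shorted operator being precisely this rank-type object constrains $b_{11}-g^*f$ and, combined with the structure of $f,g$, forces $R(b_{21})\subseteq R(b_{22})$ and $R(b_{12}^*)\subseteq R(b_{22}^*)$, i.e.\ complementability; alternatively one can reduce to $N(b_{22})$ and $N(b_{22}^*)$ directly and check the inclusions there. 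I would handle this point carefully, and otherwise the argument is the routine block-matrix bookkeeping mirrored on the proof of Lemma \ref{shorted}.
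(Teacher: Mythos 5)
Your argument is correct and follows essentially the same route as the paper: Proposition \ref{minusmatrix} for the block form of $B$, Lemma \ref{shorted} to compute the shorted operator, and a comparison of the $(1,1)$ entries. The one thing to fix is the closing paragraph: the ``main obstacle'' you describe does not arise, because item 2 of the theorem hypothesizes that $B$ is \emph{complementable} (the strong condition $R(b_{21})\subseteq R(b_{22})$ and $R(b_{12}^*)\subseteq R(b_{22}^*)$), not merely weakly complementable, so Douglas' lemma and the representation (\ref{Amtxcomp}) hand you the bounded factors $x,y$ immediately. It is just as well that the extra step is not needed: weak complementability together with $B_{/\overline{R(A^*)},\overline{R(A)}}=A$ does \emph{not} in general force complementability --- the shorted operator only involves $b_{11}-g^*f$ and places no further constraint on the off-diagonal blocks --- so the fallback you sketch there would not go through, and the hypothesis of the theorem is genuinely the stronger notion.
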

\begin{proof}
By Proposition \ref{minusmatrix},  $A\overset{-}{\leq} B$ if and only if there exist $x\in L(\overline{R(A^*)}, N(A))$  and $y\in L(N(A^*), \overline{R(A)})$ such that $B=\left(\begin{matrix} a+yb_{22}x & yb_{22} \\ b_{22}x & b_{22}\end{matrix}\right).$  Then, $B$ is $(\overline{R(A^*)}, \overline{R(A)})-$complementable, and Lemma \ref{shorted} leads to $B_{/\overline{R(A^*)}, \overline{R(A)}}=\left(\begin{matrix} a & 0 \\ 0 & 0\end{matrix}\right)$.

Conversely, if $B$ is $(\overline{R(A^*)}, \overline{R(A)})-$complementable then we can write $B=\left(\begin{matrix} b_{11} & yb_{22} \\ b_{22}x & b_{22}\end{matrix}\right)$ and, by Lemma \ref{shorted}, $B_{/\overline{R(A^*)}, \overline{R(A)}}=\left(\begin{matrix} b_{11}-yb_{22}x & 0 \\ 0 & 0\end{matrix}\right)$. But using that $B_{/\overline{R(A^*)}, \overline{R(A)}}=A,$ we get that $b_{11}-yb_{22}x=a$ or, $b_{11}=a+yb_{22}x.$ Then, $B=\left(\begin{matrix} a+yb_{22}x & yb_{22} \\ b_{22}x & b_{22}\end{matrix}\right)$ and, by Proposition \ref{minusmatrix}, $A\overset{-}{\leq} B$.
\end{proof}

 When $A\overset{\diamond}{\leq}B$, the next corollary gives a more explicit formula for $B$ than in Proposition \ref{diamond3} under an additional compatibility condition which, in particular, holds when $A$ has closed range.

\begin{prop}\label{diamcomp}
Let $A,B\in  L(\cH, \cK)$ be such that $B$ is $(N(A), N(A^*))-$complementable, and let $S:=B_{/N(A),N(A^*)}.$ The following conditions are equivalent:
\begin{enumerate}
\item $A\overset{\diamond}{\leq}B.$
\item $B=\left(\begin{matrix} a & xS \\ Sy & b_{22}\end{matrix}\right)$ for some $y\in L(\overline{R(A^*)}, N(A))$ and $x\in L(N(A^*), \overline{R(A)})$.  
\end{enumerate}
\end{prop}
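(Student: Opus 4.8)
The plan is to leverage Proposition \ref{diamond3}, which already characterizes $A\overset{\diamond}{\leq}B$ by the three conditions $b_{11}=a$, $R(b_{21})\subseteq R(B)$ and $R(b_{12}^*)\subseteq R(B^*)$, and to translate the range inclusions into the stated factorizations through the bilateral shorted operator $S=B_{/N(A),N(A^*)}$. The key observation is that $S$ is the Schur complement of $B$ with respect to the \emph{complementary} decomposition $\cH=\overline{R(A^*)}\oplus N(A)$, $\cK=\overline{R(A)}\oplus N(A^*)$ — note the roles of the blocks are swapped relative to (\ref{Amatrix}). So I would first write out, using Lemma \ref{shorted} applied with the subspaces $N(A), N(A^*)$ (after recording the matrix of $B$ with respect to $\cH=N(A)^\perp\oplus N(A)=\overline{R(A^*)}\oplus N(A)$ and similarly for $\cK$), what $(N(A),N(A^*))$-complementability of $B$ means in terms of the blocks $b_{ij}$: it says $R(b_{12}^*)\subseteq R(|b_{22}|^{1/2})$ and $R(b_{21})\subseteq R(|b_{22}^*|^{1/2})$, and then $S = \begin{pmatrix} b_{11}-g^*f & 0\\ 0 & 0\end{pmatrix}$ in the block form with $(1,1)$-entry on $\overline{R(A^*)}$, where $g,f$ absorb $b_{22}$ exactly as in the definition.

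Next, for the implication (1)$\Rightarrow$(2): assume $A\overset{\diamond}{\leq}B$. By Proposition \ref{diamond3}, $b_{11}=a$, so $S=\begin{pmatrix} a-g^*f & 0\\ 0&0\end{pmatrix}$... but I actually want $S$ to reappear as a factor of $b_{21}$ and $b_{12}$, so the cleaner route is: from $R(b_{21})\subseteq R(B)$ and $b_{11}=a$ deduce that the ``first column'' $\begin{pmatrix} a\\ b_{21}\end{pmatrix}$ of $B$ has its range inside $R(B)$ trivially, and then use a Douglas-type factorization together with the complementability hypothesis to peel off the shorted part. Concretely, $(N(A),N(A^*))$-complementability gives that $b_{21}$ factors through $|b_{22}^*|^{1/2}$ and hence — combined with $R(b_{21})\subseteq R(B)$ from the diamond order — through $S$ itself; symmetrically $b_{12}^*$ factors through $S^*$. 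This yields $b_{21}=Sy$ and $b_{12}=xS$ for bounded $y\in L(\overline{R(A^*)},N(A))$ wait — I need to check the domains/codomains: since $S$ lives on $\overline{R(A^*)}$ in its non-trivial corner, $b_{21}:\overline{R(A^*)}\to N(A^*)$ must be written as (something $N(A^*)\leftarrow \overline{R(A^*)}$) composed appropriately, giving $b_{21}=Sy$ with $y\in L(\overline{R(A^*)},N(A))$ and $b_{12}=xS$ with $x\in L(N(A^*),\overline{R(A)})$, matching the statement. For the converse (2)$\Rightarrow$(1), I would simply substitute the block form $B=\begin{pmatrix} a & xS\\ Sy & b_{22}\end{pmatrix}$ into the three conditions of Proposition \ref{diamond3}: $b_{11}=a$ is immediate, and $R(Sy)\subseteq R(S)\subseteq R(B)$ (the last inclusion because the shorted operator's range is contained in the original operator's range — which follows from the complementability structure, or can be checked directly from $S=\begin{pmatrix} b_{11}-g^*f & 0\\0&0\end{pmatrix}$ and the factorization of the off-diagonal blocks), and symmetrically $R((xS)^*)\subseteq R(S^*)\subseteq R(B^*)$, so $A\overset{\diamond}{\leq}B$.

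The main obstacle I anticipate is the bookkeeping around which decomposition the shorted operator $S$ is taken with respect to, and correctly identifying that the non-degenerate corner of $S$ sits on $\overline{R(A^*)}$ (resp. $\overline{R(A)}$) so that the claimed domains of $x$ and $y$ come out right; a second, more substantive point is justifying that $R(S)\subseteq R(B)$ and $R(S^*)\subseteq R(B^*)$ and, in the hard direction, that the range inclusion $R(b_{21})\subseteq R(B)$ supplied by the diamond order can be \emph{upgraded} to $R(b_{21})\subseteq R(S)$ using complementability — this is where one needs a careful Douglas' lemma argument rather than a formal manipulation, possibly invoking that under $(N(A),N(A^*))$-complementability one has $R(B)\cap \overline{R(A)} \subseteq$ (range of the first block-column of the ``reduced'' operator), or equivalently that $S$ and $B$ have the same range after projecting onto $\overline{R(A)}$. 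I would isolate that as a small internal claim. The compatibility remark — that closed range of $A$ forces $(N(A),N(A^*))$-complementability of $B$ — I would dispatch separately by noting that closed range of $A$ makes $N(A)$ and $N(A^*)$ complemented in a way that makes the relevant range conditions on $|b_{22}|^{1/2}$ automatic, so it does not enter the proof of the equivalence proper.
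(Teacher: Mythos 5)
Your overall strategy (reduce everything to Proposition \ref{diamond3} and convert the two range inclusions into factorizations through $S$ via Douglas' lemma) is the right skeleton and is essentially the paper's, but your execution rests on a misidentification of the shorted operator, and the one step you yourself flag as delicate is exactly the step that goes wrong. The operator $S=B_{/N(A),N(A^*)}$ is the shorted operator \emph{to} the pair $(N(A),N(A^*))$: in the paper's definition the subspaces $\ese=N(A)$, $\ete=N(A^*)$ are the ones that are \emph{kept}, so the nontrivial corner of $S$ maps $N(A)\to N(A^*)$ and equals $b_{22}$ minus a Schur-type correction built from $b_{11}$ (compare Lemma \ref{shorted} and the explicit formula $S=b_{22}-b_{12}^*a^{-1}b_{12}$ in the theorem that follows); correspondingly, $(N(A),N(A^*))$-complementability is the condition $R(b_{12})\subseteq R(b_{11})$ and $R(b_{21}^*)\subseteq R(b_{11}^*)$, not a condition on $b_{22}$. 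What you describe --- a corner $b_{11}-g^*f$ sitting on $\overline{R(A^*)}\to\overline{R(A)}$ with complementability phrased through $|b_{22}|^{1/2}$ --- is the \emph{other} shorted operator, $B_{/\overline{R(A^*)},\overline{R(A)}}$, the one used in Theorem \ref{minuscomplementable}. Your own type-checking hesitation (``wait --- I need to check the domains/codomains'') is the symptom: $b_{21}=Sy$ with $y\in L(\overline{R(A^*)},N(A))$ only parses if $S$ acts $N(A)\to N(A^*)$.

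Because of this, the ``small internal claim'' you isolate is both misstated and left unproved, and it is the heart of the proof. The fact that makes the hard direction work is the identity $R(S)=R(B)\cap N(A^*)$, valid under $(N(A),N(A^*))$-complementability; the paper imports it from Corollary 4.9 of the bilateral shorted operator paper. With it the ``upgrade'' is immediate: $R(b_{21})\subseteq N(A^*)$ holds trivially and $R(b_{21})\subseteq R(B)$ by Proposition \ref{diamond3}, hence $R(b_{21})\subseteq R(S)$ and Douglas gives $b_{21}=Sy$; the adjoint identity $B^*_{/N(A^*),N(A)}=S^*$ handles $b_{12}=xS$; and the same identity yields the converse, since then $R(Sy)\subseteq R(S)\subseteq R(B)$ and $R((xS)^*)\subseteq R(S^*)\subseteq R(B^*)$. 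Your version of the claim (involving $R(B)\cap\overline{R(A)}$) names the wrong subspace, and your assertion that complementability lets $b_{21}$ ``factor through $S$ itself'' is a formal gesture rather than an argument. Once you swap the corners and quote (or prove) $R(S)=R(B)\cap N(A^*)$, your outline collapses onto the paper's proof.
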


\begin{proof} Since $B$ is $(N(A), N(A^*))-$complementable, by \cite[Corollary 4.9]{MR2214409}, $R(S)=R(B)\cap N(A^*).$ 
Assume that $A\overset{\diamond}{\leq}B.$ Then, $b_{11}=a.$ Furthermore, by Proposition \ref{diamond3}, $R(b_{21})\subseteq R(S).$ By Douglas' lemma there exists $y\in L(\overline{R(A^*)}, N(A))$ such that $b_{21}=Sy.$ In a similar way, $b_{12}^*=B^*_{/N(A^*),N(A)}x^*$ or, $b_{12}=xS$ for some $x\in L(\overline{R(A)}, N(A^*))$, where we used that $B^*_{/N(A^*),N(A)}=(B_{/N(A),N(A^*)})^*=S^*.$

The converse follows using again that $R(S)=R(B)\cap N(A^*)$ and Proposition \ref{diamond3}.
 
\end{proof}

For fixed operators $A,B$, the previous result provides a criterion  for $A\overset{\diamond}{\leq}B$ in terms of $S:=B_{/N(A),N(A^*)}.$ We next describe all the positive operators $B$ such that  $A\overset{\diamond}{\leq}B,$ for a fixed $A\in L(\cH)^+.$

\begin{teo}
Let $A,B\in  L(\cH)^+$, such that $A$ has closed range. Then, the following conditions are equivalent:
\begin{enumerate}
\item $A\overset{\diamond}{\leq}B;$ 
\item $B=\left(\begin{matrix} a & G^{-1}y^*b_{22} \\ b_{22}yG^{-1} & b_{22}\end{matrix}\right)$, 

for $y\in L(\overline{R(A)}, N(A))$, $b_{22}\in L(N(A))^+$ and $G=\frac{1}{2}+[(y^*b_{22}y+\frac{a}{4})\# a]a^{-1}.$
\end{enumerate}
\end{teo}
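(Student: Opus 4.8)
The plan is to specialize the general diamond-order description to the positive, closed-range case and then recognize the resulting equations as an algebraic Riccati equation to which Lemma \ref{propmean}(3) applies. First I would note that since $A\in L(\cH)^+$ has closed range, $a\in L(\overline{R(A)})$ is positive and invertible, $N(A^*)=N(A)$, and the decompositions $\cH=\overline{R(A^*)}\oplus N(A)$ and $\cH=\overline{R(A)}\oplus N(A^*)$ coincide. Because $a$ is invertible, $B$ is automatically $(N(A),N(A^*))$-complementable (the conditions $R(c^*)\subseteq R(|e|^{1/2})$, $R(d)\subseteq R(|e^*|^{1/2})$ degenerate, as the relevant ``$e$'' block, namely $a$, is invertible), so Proposition \ref{diamcomp} applies with the roles of the subspaces swapped. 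Moreover $B\in L(\cH)^+$ forces $b_{22}\in L(N(A))^+$ and forces the $(2,1)$ block to be the adjoint of the $(1,2)$ block.

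Next I would extract the concrete shape of $S:=B_{/N(A),N(A^*)}$. Writing $B=\left(\begin{matrix} b_{11} & b_{12} \\ b_{12}^* & b_{22}\end{matrix}\right)$ in the common decomposition $\overline{R(A)}\oplus N(A)$, Lemma \ref{shorted} (or the defining formula via the polar decomposition of $b_{11}$, which is just $b_{11}$ itself when $b_{11}$ is positive invertible) gives $S = b_{11}-b_{12}b_{11}^{-1}b_{12}^*$ on $\overline{R(A)}$, i.e.\ the ordinary Schur complement. By Proposition \ref{diamcomp}, $A\overset{\diamond}{\leq}B$ is equivalent to $b_{11}=a$ together with $b_{12}^* = S\,y$ for some $y\in L(\overline{R(A)},N(A))$, hence $b_{12}=y^*S$ and, by positivity, the $(2,1)$ block equals $S y = b_{12}^*$ automatically. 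So the content is: $b_{11}=a$ and $b_{12}^*=Sy$ with $S=a-b_{12}b_{11}^{-1}b_{12}^* = a - y^*S a^{-1} S y$. The remaining task is to solve this fixed-point relation for $S$ in terms of $y$, $b_{22}$, and $a$, and to re-express $b_{12}=y^*S$ in the announced form $b_{12}=G^{-1}y^*b_{22}$.

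The main obstacle — and the heart of the proof — is recognizing that the equation governing $S$ is exactly the algebraic Riccati equation \eqref{Riccati}. One must also use the other constraint coming from complementability, namely that $b_{22}$ is related to $S$ through the off-diagonal block: positivity of $B$ together with $b_{12}^*=Sy$ yields (after writing $b_{22}$ via the Schur complement of $B$ with respect to $N(A)$, or directly comparing blocks) a relation of the form $b_{22}=y^*Sy + (\text{something})$, which lets one eliminate $S$ in favor of $b_{22}$. Carrying this through, with $B=a$ playing the role of the invertible operator and $T$ chosen so that $BT$ is selfadjoint, Lemma \ref{propmean}(3) identifies the selfadjoint solution as $X=(T^*BT+C)\# B + BT$; matching this against $G=\tfrac12 + [(y^*b_{22}y+\tfrac a4)\# a]a^{-1}$ is then a matter of substituting $C$, $T$ and using Lemma \ref{propmean}(1)–(2) (commutativity and the formula $a\#c = a^{1/2}(a^{-1/2}ca^{-1/2})^{1/2}a^{1/2}$) to absorb the scalar $\tfrac14$ and $\tfrac12$ factors. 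I would finish by checking that $G$ so defined is positive and invertible (so that $G^{-1}$ makes sense), which follows because $G = \tfrac12 + (\text{positive operator})$, and then verifying that $b_{12}=y^*S$ rearranges to $b_{12}=G^{-1}y^*b_{22}$; the converse direction is the same computation read backwards, plus Proposition \ref{diamcomp}'s converse.
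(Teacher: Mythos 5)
Your overall strategy is the paper's: reduce via Proposition \ref{diamcomp} to the relation $b_{12}^*=Sy$ with $S:=B_{/N(A),N(A^*)}$, recognize a Riccati equation, and solve it with Lemma \ref{propmean}(3). However, there is a genuine error at the heart of the computation: the identification of $S$. Since $\ese=\ete=N(A)$, the block playing the role of ``$e$'' in the definition of the shorted operator is $b_{11}=a$ (as you yourself note when checking complementability), so $S$ is the Schur complement of the \emph{invertible} block $a$ and acts on $N(A)$: $S=b_{22}-b_{12}^*a^{-1}b_{12}$. Your formula ``$S=b_{11}-b_{12}b_{11}^{-1}b_{12}^*$ on $\overline{R(A)}$'' is not a Schur complement of any block, lives on the wrong subspace, and leads to the fixed-point relation $S=a-y^*Sa^{-1}Sy$, which is not the equation that gets solved. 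The correct derivation substitutes $S=b_{22}-b_{12}^*a^{-1}b_{12}$ into $b_{12}^*=Sy$ to obtain $b_{22}y=b_{12}^*+b_{12}^*a^{-1}b_{12}y$, and then \emph{left-multiplies by $y^*$}: with $z:=b_{12}y=y^*Sy$ (selfadjoint precisely because $S\geq 0$ -- a fact your setup obscures but which is needed to invoke the selfadjoint solution in Lemma \ref{propmean}(3)), one gets the Riccati equation $y^*b_{22}y=z^*a^{-1}z+\frac{z+z^*}{2}$, i.e.\ (\ref{Riccati}) with unknown $z$, $T=-\tfrac12$ and $C=y^*b_{22}y$. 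This yields $b_{12}y=(y^*b_{22}y+\tfrac a4)\#a-\tfrac a2$ and hence $Gb_{12}=y^*b_{22}$. The compression by $y^*$ is the step that produces the term $y^*b_{22}y$ appearing in $G$; without it your ``eliminate $S$ in favor of $b_{22}$'' step has no concrete content.

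Two smaller gaps: in the converse direction, ``the same computation read backwards'' does not produce the range inclusions $R(A)\subseteq R(B)$ and $R(A^*)\subseteq R(B^*)$ required by the diamond order; the paper first verifies the identity $(y^*b_{22}y+\tfrac a4)\#a-\tfrac a2=b_{12}y$ from $Gb_{12}=y^*b_{22}$ (by cancelling the invertible factor $\tfrac12+(D\#a)a^{-1}$) and then exhibits an explicit $X=\left(\begin{matrix} 1+a^{-1}b_{12}y & 0\\ -y & 0\end{matrix}\right)$ with $BX=A$. Also, $G$ is invertible not because it is ``$\tfrac12$ plus a positive operator'' ($(D\#a)a^{-1}$ is not selfadjoint in general) but because it is $\tfrac12$ plus an operator similar to a positive one, so its spectrum lies in $[\tfrac12,\infty)$.
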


\begin{proof}

Assume that $A\overset{\diamond}{\leq}B$. Then, $b_{11}=a$ and the operator $S:=B_{/N(A),N(A)}$ is well-defined. In fact, in this case, $a$ is invertible and $B$ is trivially $(N(A), N(A))-$complementable.  Moreover, $S=b_{22}-b_{12}^*a^{-1} b_{12}$ and, by Proposition \ref{diamcomp}, $b_{12}^*=Sy$ for some $y\in L(\overline{R(A)}, N(A))$. Hence,  
\begin{equation}\label{ec1}
b_{22}y=b_{12}^*+b_{12}^*a^{-1} b_{12}y,
\end{equation}
 and multiplying both sides by $y^*$ we get
 $$y^*b_{22}y=y^*b_{12}^*+y^*b_{12}^*a^{-1} b_{12}y.$$
 Renaming $z:=b_{12}y$ and $c':=y^*b_{22}y$ and noting that $z=z^*$, the last equality turns into the Riccati equation: $$c'=z^*a^{-1} z+\frac{z+z^*}{2}.$$ Therefore, applying Lemma \ref{propmean} , we obtain that $b_{12}y=(\frac{a}{4}+y^*b_{22}y)\# a-\frac{a}{2}.$ Simple calculations show that $b_{12}=(\frac{1}{2}+[(y^*b_{22}y+\frac{a}{4})\# a]a^{-1})^{-1}y^*b_{22}.$

Conversely, suppose that item 2 holds. Then, from $G b_{12}=y^*b_{22},$ it can be easily deduced that 
\begin{equation}\label{6}
b_{12}^*+b_{12}^*a^{-1}((y^*b_{22}y+\frac{a}{4})\# a-\frac{a}{2})=b_{22}y,
\end{equation}
 and so $y^*b_{12}^*+y^*b_{12}^*a^{-1}((y^*b_{22}y+\frac{a}{4})\# a-\frac{a}{2})=y^*b_{22}y.$ If $D:=(y^*b_{22}y+\frac{a}{4})$,  we claim that $D\# a-\frac{a}{2}=b_{12}y.$ In fact,  
 \begin{eqnarray}
(\frac{1}{2}+(D\#a) a^{-1})b_{12}y&=&y^*b_{22}y=-\frac{a}{4}+D\nonumber\\
&=&-\frac{a}{4}+a^{1/2}(a^{-1/2}Da^{-1/2})^{1/2}a^{1/2}a^{-1}a^{1/2}(a^{-1/2}Da^{-1/2})^{1/2}a^{1/2}\nonumber\\
&=& -\frac{a}{4}+(D\#a) a^{-1} (D\#a)\nonumber\\
&=& (\frac{1}{2}+(D\#a) a^{-1})(D\#a -\frac{a}{2}),\nonumber
\end{eqnarray}
and so $D\# a-\frac{a}{2}=b_{12}y$ as desired. Hence, from (\ref{6}) we get that $b_{12}^*=b_{22}y-b_{12}^*a^{-1} b_{12}y$. Now, if  $X:=\left(\begin{matrix} 1+a^{-1} b_{12}y & 0 \\ -y & 0\end{matrix}\right),$ then $BX=A.$ Hence, $R(A)\subseteq R(B)$ and so $A\overset{\diamond}{\leq}B$.

\end{proof}

We end this section by introducing a new order relation in $L(\cH, \cK),$ called the {\it plus order}, which emerges as a natural generalization of both, the diamond order and the minus order. 

\begin{defi} Let $A,B\in L(\cH, \cK)$. Write $A\overset{+}{\leq}B$ if  $A\overset{s}{\leq}B$ and there are projections $\tilde{Q}$ and $Q$ such that $A = \tilde{Q}BQ.$ In such case, without loss of generality we can assume that $R(\tilde{Q})=\overline{R(A)}$ and $R(Q^*)=\overline{R(A^*)}.$
\end{defi}

\begin{lem}
The relationship $\overset{+}{\leq}$ defines a partial order on $L(\cH,\cK).$
\end{lem}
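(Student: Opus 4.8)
The plan is to verify the three defining properties of a partial order: reflexivity, antisymmetry, and transitivity, for the relation $\overset{+}{\leq}$ on $L(\cH,\cK)$.

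\emph{Reflexivity.} For any $A\in L(\cH,\cK)$ we trivially have $A\overset{s}{\leq}A$, and taking $\tilde{Q}=P_A$ and $Q=P_{A^*}$ gives $\tilde{Q}AQ=P_AAP_{A^*}=A$, since $R(A)\subseteq\overline{R(A)}$ and $R(A^*)\subseteq\overline{R(A^*)}$. Hence $A\overset{+}{\leq}A$.

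\emph{Antisymmetry.} Suppose $A\overset{+}{\leq}B$ and $B\overset{+}{\leq}A$. From $A\overset{s}{\leq}B$ and $B\overset{s}{\leq}A$ we get $R(A)=R(B)$ and $R(A^*)=R(B^*)$, so $P_A=P_B$ and $P_{A^*}=P_{B^*}$. Writing $A=\tilde Q B Q$ with $R(\tilde Q)=\overline{R(A)}$ and $R(Q^*)=\overline{R(A^*)}$, note that $\tilde Q$ acts as the identity on $R(B)=R(A)$ (so $\tilde Q B=B$... careful: $\tilde Q$ is a projection with range $\overline{R(A)}$ but not necessarily $P_A$). The cleaner route is to compare ranges: from $A=\tilde Q BQ$ we have $R(A)\subseteq R(\tilde Q)=\overline{R(A)}$ and $R(A^*)\subseteq R(Q^*)=\overline{R(A^*)}$, so there is no contradiction yet; one must use the adjoint relation $B=\hat Q A\hat Q'$ as well. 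I would instead use the matrix-representation machinery developed in this section: with respect to $\cH=\overline{R(A^*)}\oplus N(A)$ and $\cK=\overline{R(A)}\oplus N(A^*)$, write $A=\left(\begin{smallmatrix}a&0\\0&0\end{smallmatrix}\right)$ and $B=\left(\begin{smallmatrix}b_{11}&b_{12}\\b_{21}&b_{22}\end{smallmatrix}\right)$. Since $R(A)=R(B)$ and $R(A^*)=R(B^*)$ one checks $b_{12}=0$, $b_{21}=0$ and $b_{22}=0$, so $B=\left(\begin{smallmatrix}b_{11}&0\\0&0\end{smallmatrix}\right)$; then $A=\tilde Q B Q$ with $R(\tilde Q)=\overline{R(A)}$, $R(Q^*)=\overline{R(A^*)}$ forces $\tilde Q=\left(\begin{smallmatrix}1&*\\0&0\end{smallmatrix}\right)$, $Q=\left(\begin{smallmatrix}1&0\\ *&0\end{smallmatrix}\right)$, and the block computation gives $a=b_{11}$, i.e. $A=B$.

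\emph{Transitivity.} Suppose $A\overset{+}{\leq}B$ and $B\overset{+}{\leq}C$. The space pre-order $\overset{s}{\leq}$ is transitive, so $A\overset{s}{\leq}C$. Choose projections with $A=\tilde Q_1 B Q_1$ (with $R(\tilde Q_1)=\overline{R(A)}$, $R(Q_1^*)=\overline{R(A^*)}$) and $B=\tilde Q_2 C Q_2$. Then $A=\tilde Q_1\tilde Q_2 C Q_2 Q_1$. The issue is that $\tilde Q_1\tilde Q_2$ and $Q_2 Q_1$ need not be projections. However, $\tilde Q_1$ is a projection onto $\overline{R(A)}\subseteq\overline{R(B)}$; using that $R(A)\subseteq R(B)$ one can replace $\tilde Q_2$ by a projection $\tilde Q_2'$ with $\tilde Q_1\tilde Q_2'=\tilde Q_1$ (since any projection equal to the identity on a subspace containing $R(\tilde Q_1)$ composes correctly), giving $\tilde Q_1 B=\tilde Q_1\tilde Q_2 C Q_2$. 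The key algebraic fact is: if $\tilde Q_1$ is a projection with $R(\tilde Q_1)\subseteq R(\tilde Q_2)$ then $\tilde Q_2\tilde Q_1=\tilde Q_1$, but we have the wrong composition order, so one instead observes $\tilde Q_1\tilde Q_2=\tilde Q_1$ holds precisely when $N(\tilde Q_2)\subseteq N(\tilde Q_1)$, which we can arrange by choosing $\tilde Q_2$ appropriately (its nullspace complementing $\overline{R(B)}$ can be taken inside the fixed complement of $\overline{R(A)}$ used for $\tilde Q_1$, since $\overline{R(A)}\subseteq\overline{R(B)}$). Similarly on the right for $Q_2Q_1$. With these adjusted choices, $\tilde Q_1\tilde Q_2 C Q_2 Q_1=\tilde Q_1 C Q_1$ ... this still is not literally $A$. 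The robust way is: set $\tilde Q:=$ the projection $\tilde Q_1$ and $Q:=Q_1$; since $A=\tilde Q_1 B Q_1$ and $B=P_B(B)P_{B^*}+\cdots$, use that $A\overset{s}{\leq}B$ together with $A=\tilde Q_1BQ_1$ already gives $A=\tilde Q_1 C' Q_1$ for any $C'$ agreeing with $B$ on the relevant blocks — and $\tilde Q_2 C Q_2=B$ means $C$ agrees with $B$ after compression. Concretely, $A=\tilde Q_1(\tilde Q_2 C Q_2)Q_1$, and since $R(\tilde Q_1)\subseteq\overline{R(B)}=R(\tilde Q_2)$ can be arranged and $R(Q_1^*)\subseteq\overline{R(B^*)}=R(Q_2^*)$, one gets $\tilde Q_1\tilde Q_2=\tilde Q_1$ and $Q_2Q_1=Q_1$ with suitable fixed complements, yielding $A=\tilde Q_1 C Q_1$ with $R(\tilde Q_1)=\overline{R(A)}$, $R(Q_1^*)=\overline{R(A^*)}$, hence $A\overset{+}{\leq}C$.

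\emph{Main obstacle.} The only delicate point is transitivity: products of oblique projections are not projections in general, so one must exploit the freedom (granted by the last line of the definition of $\overset{+}{\leq}$) to fix the ranges of $\tilde Q$ and $Q$, and additionally choose their nullspaces compatibly using the nesting $\overline{R(A)}\subseteq\overline{R(B)}$ and $\overline{R(A^*)}\subseteq\overline{R(B^*)}$ coming from $A\overset{s}{\leq}B$, so that the composite $\tilde Q_1\tilde Q_2$ collapses to $\tilde Q_1$ and $Q_2Q_1$ collapses to $Q_1$. I would write this step carefully, perhaps isolating the elementary observation that if $R_1\subseteq R_2$ are closed subspaces and $Z$ is any closed complement of $R_2$, then extending $Z$ to a complement $Z_1\supseteq Z$ of $R_1$ and letting $\tilde Q_i$ be the projection onto $R_i$ along $Z_i$ (with $Z_2=Z$) gives $\tilde Q_1\tilde Q_2=\tilde Q_1$.
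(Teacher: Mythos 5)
Your reflexivity and antisymmetry arguments are fine (the paper dismisses both as clear; your matrix computation for antisymmetry is a valid, if slightly roundabout, way to see it). The genuine gap is in transitivity, and it is the step you yourself flag as delicate. You try to force the collapses $\tilde Q_1\tilde Q_2=\tilde Q_1$ and $Q_2Q_1=Q_1$ by re-choosing the nullspaces of the projections so that $N(\tilde Q_2)\subseteq N(\tilde Q_1)$. But the projections are not free parameters: $\tilde Q_1$ is pinned down (beyond its range) by the identity $A=\tilde Q_1BQ_1$, and $\tilde Q_2$ by $B=\tilde Q_2CQ_2$. Replacing $\tilde Q_1$ by a projection $\tilde Q_1'$ with the same range but a different nullspace does not preserve $\tilde Q_1'BQ_1=A$ (the difference $(\tilde Q_1-\tilde Q_1')BQ_1$ has no reason to vanish, since $R(BQ_1)\not\subseteq\overline{R(A)}$ in general), and likewise altering $\tilde Q_2$ destroys the factorization of $B$. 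Moreover the identity you are aiming for, $A=\tilde Q_1CQ_1$ with the \emph{original} projections, is simply false in general, so no amount of bookkeeping will rescue that route.

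The fix is the observation you state and then discard as being "the wrong composition order": from $R(\tilde Q_1)=\overline{R(A)}\subseteq\overline{R(B)}=R(\tilde Q_2)$ you get $\tilde Q_2\tilde Q_1=\tilde Q_1$ for free, and this reverse-order identity is exactly what makes the forward-order product idempotent:
\[
(\tilde Q_1\tilde Q_2)^2=\tilde Q_1(\tilde Q_2\tilde Q_1)\tilde Q_2=\tilde Q_1\tilde Q_1\tilde Q_2=\tilde Q_1\tilde Q_2,
\]
and dually $Q_1Q_2=Q_1$ (from $R(Q_1^*)\subseteq R(Q_2^*)$) gives $(Q_2Q_1)^2=Q_2Q_1$. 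You do not need $\tilde Q_1\tilde Q_2$ to equal $\tilde Q_1$; the definition of $\overset{+}{\leq}$ only asks for \emph{some} pair of oblique projections, so $A=(\tilde Q_1\tilde Q_2)C(Q_2Q_1)$ together with $A\overset{s}{\leq}C$ already yields $A\overset{+}{\leq}C$. This is precisely the paper's "easy computation," and it requires no adjustment of nullspaces at all.
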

\begin{proof} Clearly, $\overset{+}{\leq}$ is a reflexive and antisymmetric relation. Let us prove that it is also transitive. If $A\overset{+}{\leq}B$ and $B\overset{+}{\leq}C$ then
 $R(A)\subseteq R(B)\subseteq R(C)$ and $R(A^*)\subseteq R(B^*)\subseteq R(C^*).$ Moreover, if $A = \tilde{Q}BQ$ and $B = \tilde{Q}_1CQ_1$ then $A = \tilde{Q}\tilde{Q}_1CQ_1Q$ and an easy computation shows that $\tilde{Q}\tilde{Q}_1$ and $Q_1Q$ are  projections. Hence, $A\overset{+}{\leq}C$.
\end{proof}

\begin{lem}\label{relation}  Let $A,B\in L(\cH,\cK).$ The next implications hold:
\begin{enumerate}
\item $A\overset{*}{\leq} B\Rightarrow A\overset{-}{\leq}B \Rightarrow A\overset{+}{\leq}B \Rightarrow A\overset{s}{\leq}B$.
\item $A\overset{*}{\leq} B\Rightarrow A\overset{\diamond}{\leq} B \Rightarrow A\overset{+}{\leq}B\Rightarrow A\overset{s}{\leq}B$.
\end{enumerate}
\end{lem}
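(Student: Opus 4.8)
The plan is to establish Lemma \ref{relation} by handling each implication separately, using the characterizations already proved in this section. Most of the implications are immediate from the definitions, so the real content lies in the two ``middle'' implications $A\overset{-}{\leq}B\Rightarrow A\overset{+}{\leq}B$ and $A\overset{\diamond}{\leq}B\Rightarrow A\overset{+}{\leq}B$, together with $A\overset{*}{\leq}B\Rightarrow A\overset{\diamond}{\leq}B$.

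First I would dispose of the easy links. The implication $A\overset{+}{\leq}B\Rightarrow A\overset{s}{\leq}B$ is built into the definition of $\overset{+}{\leq}$. For $A\overset{*}{\leq}B\Rightarrow A\overset{-}{\leq}B$ I would invoke the remark made right after Corollary \ref{order**}: if $A\overset{*}{\leq}B$ then by Lemma 2.2(1) $A=P_AB=BP_{A^*}$, which exhibits $A$ as $\tilde QB=BQ$ with $\tilde Q=P_A$ and $Q=P_{A^*}$ oblique (indeed orthogonal) projections. Then $A\overset{-}{\leq}B\Rightarrow A\overset{+}{\leq}B$ follows because $A=\tilde QB=BQ$ gives $A=\tilde Q A=\tilde Q BQ$ (using $A=BQ$), and by Lemma 2.2(3) the projections can be chosen with $R(\tilde Q)=\overline{R(A)}$, $R(Q^*)=\overline{R(A^*)}$; moreover $A\overset{-}{\leq}B$ forces $R(A)\subseteq R(B)$ and $R(A^*)\subseteq R(B^*)$ by writing $A^*=Q^*B^*$, so $A\overset{s}{\leq}B$ holds and hence $A\overset{+}{\leq}B$. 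Alternatively, one can read both facts off Proposition \ref{minusmatrix}: the matrix form $B=\left(\begin{smallmatrix} a+yb_{22}x & yb_{22} \\ b_{22}x & b_{22}\end{smallmatrix}\right)$ makes $\tilde Q=\left(\begin{smallmatrix} 1 & -y \\ 0 & 0\end{smallmatrix}\right)$, $Q=\left(\begin{smallmatrix} 1 & 0 \\ -x & 0\end{smallmatrix}\right)$ satisfy $\tilde QBQ = A$ directly, and the inclusions $R(A)\subseteq R(B)$, $R(A^*)\subseteq R(B^*)$ are visible from the same representation.

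Next, for $A\overset{*}{\leq}B\Rightarrow A\overset{\diamond}{\leq}B$, I would note that $A\overset{*}{\leq}B$ is equivalent by Corollary \ref{order**} to $B=\left(\begin{smallmatrix} a & 0 \\ 0 & b_{22}\end{smallmatrix}\right)$; then $b_{11}=a$, $b_{21}=0$, $b_{12}=0$, so trivially $R(b_{21})\subseteq R(B)$ and $R(b_{12}^*)\subseteq R(B^*)$, and Proposition \ref{diamond3} yields $A\overset{\diamond}{\leq}B$. (Equivalently, $A\overset{*}{\leq}B$ gives $AA^*=BA^*$ hence $AA^*A=BA^*A$ and $AA^*A=AA^*B^*{}^*\!$... more cleanly: $A^*A=A^*B$ and $AA^*=BA^*$ give $AB^*A=A(B A^*)^* $... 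I would just use the matrix argument to avoid juggling adjoints.) Then $A\overset{\diamond}{\leq}B\Rightarrow A\overset{+}{\leq}B$: by Proposition \ref{diamond3}, $b_{11}=a$ and $R(b_{21})\subseteq R(B)$, $R(b_{12}^*)\subseteq R(B^*)$, the latter two being exactly $R(A)\subseteq R(B)$ and $R(A^*)\subseteq R(B^*)$, i.e.\ $A\overset{s}{\leq}B$. It remains to produce oblique projections $\tilde Q,Q$ with $\tilde QBQ=A$; here I would use Douglas' lemma (as in the proof of Proposition \ref{left*}) to get $X\in L(\cH,\cK)$ with $A=BX$, $N(X)=N(A)$, $R(X)\subseteq \overline{R(B^*)}=\overline{R(A^*)}$ (the last inclusion using $R(A^*)\subseteq R(B^*)$ — actually $R(X^*)\subseteq\overline{R(B^*)}$), shape $X$ into $Q:=\left(\begin{smallmatrix} x & 0 \\ z & 0\end{smallmatrix}\right)$ and check $x=1$ on $\overline{R(A^*)}$ using $b_{11}=a$, so $Q$ is idempotent with $R(Q^*)=\overline{R(A^*)}$; symmetrically produce $\tilde Q$ from $A^*=B^*Y$, and verify $A=\tilde QBQ = \tilde Q(BQ)=\tilde Q A = A$ (since $R(A)=R(\tilde Q)$).

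The main obstacle I anticipate is the verification in the $\overset{\diamond}{\leq}\Rightarrow\overset{+}{\leq}$ step that the idempotents obtained from Douglas' lemma genuinely satisfy $\tilde QBQ=A$ simultaneously and can be normalized to have the prescribed ranges; one must be careful that choosing $Q$ to reconstruct $A=BQ$ and $\tilde Q$ to reconstruct $A=\tilde QB$ is compatible — the safe route is to note that from $b_{11}=a$ the block $Q=\left(\begin{smallmatrix} 1 & 0 \\ -z & 0\end{smallmatrix}\right)$ works iff $b_{21}=b_{22}z$ for some $z$, which is precisely $R(b_{21})\subseteq R(b_{22})$; but $R(b_{21})\subseteq R(B)$ is weaker, so one may need $z$ acting through the full $B$ rather than $b_{22}$, meaning $Q$ need not be block-lower-triangular. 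This is exactly the subtlety the authors flagged in Proposition \ref{diamcomp} (the compatibility/complementability hypothesis). For the lemma, though, we only need \emph{existence} of some oblique $\tilde Q,Q$, which follows from $A\overset{s}{\leq}B$ via Douglas: $A=BX$ with $R(X^*)\subseteq\overline{R(A^*)}$ and $A=YB$ with $R(Y)\subseteq\overline{R(A)}$; then setting $Q:=X(A^\dagger A)$ or more simply using that $BX=A$ and $A$ restricted to $\overline{R(A^*)}$ is injective forces $X|_{\overline{R(A^*)}}$ to be a right inverse of that restriction, one builds the required idempotents. I would present this carefully but compactly, leaning on the already-established Propositions \ref{minusmatrix} and \ref{diamond3} to keep the argument short.
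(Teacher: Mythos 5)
Most of your chain is sound: $\overset{*}{\leq}\Rightarrow\overset{-}{\leq}$, $\overset{-}{\leq}\Rightarrow\overset{+}{\leq}$, $\overset{+}{\leq}\Rightarrow\overset{s}{\leq}$ and $\overset{*}{\leq}\Rightarrow\overset{\diamond}{\leq}$ are all handled correctly (one cosmetic slip: in $\overset{-}{\leq}\Rightarrow\overset{+}{\leq}$ the inclusion $R(A^*)\subseteq R(B^*)$ comes from $A=\tilde QB$, i.e. $A^*=B^*\tilde Q^*$, not from $A^*=Q^*B^*$). The genuine gap is in $A\overset{\diamond}{\leq}B\Rightarrow A\overset{+}{\leq}B$. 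Your first route --- producing an idempotent $Q$ with $BQ=A$ from Douglas' lemma --- cannot work: writing $A=BX$ with $X=\left(\begin{smallmatrix}x&0\\z&0\end{smallmatrix}\right)$ and using $b_{11}=a$ only yields $a(1-x)=b_{12}z$, which forces $x=1$ only when $b_{12}=0$ (the extra information available in Proposition \ref{left*} but not here); demanding $BQ=A$ for a projection $Q$ is essentially half of the minus order, which the diamond order does not imply. You sense this, but your fallback --- that oblique projections with $\tilde QBQ=A$ exist whenever $A\overset{s}{\leq}B$ --- is false: take $A=I$, $B=2I$; then $A\overset{s}{\leq}B$, yet $2\tilde QQ=I$ would make $\tilde QQ$ invertible, forcing $\tilde Q=Q=I$ and $\tilde QQ=I\neq \tfrac12 I$. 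Indeed, if $\overset{s}{\leq}$ implied $\overset{+}{\leq}$, the plus order could not be antisymmetric.

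The missing step is elementary and already sits in the preliminaries: item 2 of the lemma in Section \ref{Preli} states that $AA^*A=AB^*A$ is equivalent to $P_ABP_{A^*}=A$. So $A\overset{\diamond}{\leq}B$ hands you the required factorization directly, with the \emph{orthogonal} projections $\tilde Q=P_A$ and $Q=P_{A^*}$ (which even satisfy the normalizations $R(\tilde Q)=\overline{R(A)}$ and $R(Q^*)=\overline{R(A^*)}$), while $A\overset{s}{\leq}B$ is part of the definition of $\overset{\diamond}{\leq}$. No appeal to Douglas' lemma or to the matrix representation is needed for this implication.
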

\begin{proof}
Straightforward.
\end{proof}

\begin{teo}
Let $A,B\in L(\cH,\cK).$ The following conditions are equivalent:
\begin{enumerate}
\item $A\overset{+}{\leq}B$.
\item $B=\left(\begin{matrix} a+yb_{22}x &  yb_{22}\\ b_{22}x  & b_{22}\end{matrix}\right)+\left(\begin{matrix} yw &  0\\ w  & 0\end{matrix}\right)+\left(\begin{matrix} zx &  z\\ 0  & 0\end{matrix}\right),$

 where $y\in L(N(A^*), \overline{R(A)})$, $x\in L(\overline{R(A^*)}, N(A)),$ $w\in L(\overline{R(A^*)}, N(A))$, and $z\in L(N(A), \overline{R(A)})$ are such that $R\left(\left(\begin{matrix} yw &  0\\ w  & 0\end{matrix}\right)\right)\subseteq R(B)$ and $R\left(\left(\begin{matrix} x^*z^* &  0\\ x^*  & 0\end{matrix}\right)\right)\subseteq R(B^*).$
\item There exist two projections $\tilde{Q},Q$ such that  $A=\tilde{Q}BQ$, $R((I-\tilde{Q})BQ)\subseteq R(B)$ and $R((\tilde{Q}B(I-Q))^*)\subseteq R(B^*)$. 
\end{enumerate}
\end{teo}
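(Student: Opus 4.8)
The plan is to prove the cycle of implications $(1)\Rightarrow(3)\Rightarrow(2)\Rightarrow(1)$, which mirrors the structure of the proofs of Propositions \ref{minusmatrix} and \ref{diamond3}.

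\textbf{Step 1: $(1)\Rightarrow(3)$.} By definition of the plus order, $A\overset{+}{\leq}B$ means $A\overset{s}{\leq}B$ together with the existence of projections $\tilde Q,Q$ with $A=\tilde QBQ$; moreover we may assume $R(\tilde Q)=\overline{R(A)}$ and $R(Q^*)=\overline{R(A^*)}$. Then $(I-\tilde Q)BQ=BQ-\tilde QBQ=BQ-A$, so $R((I-\tilde Q)BQ)\subseteq R(BQ)+R(A)\subseteq R(B)$ using $R(A)\subseteq R(B)$ from $A\overset{s}{\leq}B$. Symmetrically, $(\tilde QB(I-Q))^*=(I-Q^*)B^*\tilde Q^*=B^*\tilde Q^*-Q^*B^*\tilde Q^*$ and $(\tilde QBQ)^*=A^*$, so $R((\tilde QB(I-Q))^*)\subseteq R(B^*)+R(A^*)\subseteq R(B^*)$. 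This gives $(3)$.

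\textbf{Step 2: $(3)\Rightarrow(2)$.} Given projections $\tilde Q,Q$ with $A=\tilde QBQ$: since $R(A)\subseteq R(B)$ and $R(A^*)\subseteq R(B^*)$ (this follows since $A=\tilde QBQ$ already forces $R(A)\subseteq R(\tilde Q)$ and $R(A^*)\subseteq R(Q^*)$, and the range inclusions into $R(B),R(B^*)$ come from analysing the matrix form — or one argues directly that after adjusting $\tilde Q,Q$ by Lemma 2.3-type normalization one may take $R(\tilde Q)=\overline{R(A)}$, $R(Q^*)=\overline{R(A^*)}$), we write $\tilde Q=\left(\begin{matrix}1&-y\\0&0\end{matrix}\right)$ and $Q=\left(\begin{matrix}1&0\\-x&0\end{matrix}\right)$ with respect to the decompositions of (\ref{Amatrix}). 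A direct block computation of $\tilde QBQ$ with $B=\left(\begin{matrix}b_{11}&b_{12}\\b_{21}&b_{22}\end{matrix}\right)$ gives $\tilde QBQ=\left(\begin{matrix}b_{11}-yb_{21}-b_{12}x+yb_{22}x&0\\0&0\end{matrix}\right)$, so $A=\tilde QBQ$ yields $a=b_{11}-yb_{21}-b_{12}x+yb_{22}x$. Now set $w:=b_{21}-b_{22}x$ and $z:=b_{12}-yb_{22}$; then $b_{21}=b_{22}x+w$, $b_{12}=yb_{22}+z$, and substituting into the equation for $a$ gives $b_{11}=a+yb_{22}x+yw+zx$. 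Assembling these four entries reproduces exactly the claimed decomposition of $B$ in $(2)$. The range conditions translate directly: $(I-\tilde Q)BQ=\left(\begin{matrix}0&y\\0&1\end{matrix}\right)B\left(\begin{matrix}1&0\\-x&0\end{matrix}\right)=\left(\begin{matrix}y(b_{21}-b_{22}x)&0\\b_{21}-b_{22}x&0\end{matrix}\right)=\left(\begin{matrix}yw&0\\w&0\end{matrix}\right)$, so $R((I-\tilde Q)BQ)\subseteq R(B)$ is precisely the first range hypothesis in $(2)$; likewise $(\tilde QB(I-Q))^*=\left(\begin{matrix}x^*z^*&0\\x^*&0\end{matrix}\right)$ after taking adjoints, giving the second. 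Finally $A\overset{s}{\leq}B$: from $b_{11}=a$ being false in general, one instead notes $A=\tilde QBQ$ combined with the two range conditions forces $R(A)\subseteq R(B)$ and $R(A^*)\subseteq R(B^*)$ — indeed $A=\tilde QBQ=BQ-(I-\tilde Q)BQ$ and both terms have range in $R(B)$; symmetrically for $A^*$.

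\textbf{Step 3: $(2)\Rightarrow(1)$.} Given $B$ of the stated form, define $\tilde Q:=\left(\begin{matrix}1&-y\\0&0\end{matrix}\right)$ and $Q:=\left(\begin{matrix}1&0\\-x&0\end{matrix}\right)$, which are projections onto $\overline{R(A)}$ and along-kernel $N(A)$ respectively. A block multiplication, using $b_{21}=b_{22}x+w$ and $b_{12}=yb_{22}+z$ and $b_{11}=a+yb_{22}x+yw+zx$ (read off from the decomposition), shows $\tilde QBQ=\left(\begin{matrix}a&0\\0&0\end{matrix}\right)=A$. For $A\overset{s}{\leq}B$: $R(A)\subseteq R(B)$ because $BQ=B\left(\begin{matrix}1&0\\-x&0\end{matrix}\right)=A+\left(\begin{matrix}yw&0\\w&0\end{matrix}\right)$ and the second summand lies in $R(B)$ by hypothesis, whence $A=BQ-\left(\begin{matrix}yw&0\\w&0\end{matrix}\right)\in R(B)$; symmetrically $R(A^*)\subseteq R(B^*)$ using the transpose computation and the second range hypothesis. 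Therefore $A\overset{+}{\leq}B$.

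\textbf{Main obstacle.} The delicate point is the normalization of the projections $\tilde Q,Q$ and the verification that the range conditions in $(3)$ really are equivalent to the range inclusions $A\overset{s}{\leq}B$ in the definition of $\overset{+}{\leq}$ rather than merely sufficient for them; one must be careful that replacing $\tilde Q,Q$ by versions with $R(\tilde Q)=\overline{R(A)}$, $R(Q^*)=\overline{R(A^*)}$ (as is always possible, cf. the argument in Lemma 2.3 and the definition of $\overset{+}{\leq}$) does not disturb the two range conditions — but since in that normalized form $(I-\tilde Q)BQ$ and $\tilde QB(I-Q)$ are computed explicitly as above, this is routine. The bookkeeping of the four parameters $x,y,z,w$ and matching them to the three matrix summands in $(2)$ is the only other place where care is needed, but it is purely computational.
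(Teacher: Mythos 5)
Your proposal is correct and follows essentially the same route as the paper: the same normalized block forms $\tilde{Q}=\left(\begin{smallmatrix}1&-y\\0&0\end{smallmatrix}\right)$, $Q=\left(\begin{smallmatrix}1&0\\-x&0\end{smallmatrix}\right)$, the identification $w=b_{21}-b_{22}x$, $z=b_{12}-yb_{22}$ coming from the splitting $B=\tilde{Q}BQ+(I-\tilde{Q})BQ+\tilde{Q}B(I-Q)+(I-\tilde{Q})B(I-Q)$, and the observation that $(I-\tilde{Q})BQ=BQ-A$ makes the range conditions equivalent to $A\overset{s}{\leq}B$; the only difference is that you arrange the implications as a cycle $(1)\Rightarrow(3)\Rightarrow(2)\Rightarrow(1)$ while the paper proves $1\Leftrightarrow2$ and $1\Leftrightarrow3$ separately, which is immaterial.
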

\begin{proof} 
Let $\tilde{Q}=\left(\begin{matrix} 1 &  -y\\ 0  & 0\end{matrix}\right)$ and $Q=\left(\begin{matrix} 1 &  0\\ -x  & 0\end{matrix}\right)$ in the decompositions $\cK=\overline{R(A)}\oplus N(A^*)$ and $\cH=\overline{R(A^*)}\oplus N(A),$ respectively. Then,  $A=\tilde{Q}BQ$ if and only if 
\begin{eqnarray}
B&=&A+(I-\tilde{Q})B(I-Q)+(I-\tilde{Q})BQ+\tilde{Q}B(I-Q)\nonumber\\
&=& \left(\begin{matrix} a+yb_{22}x &  yb_{22}\\ b_{22}x  & b_{22}\end{matrix}\right)+\left(\begin{matrix} y(b_{21}-b_{22}x) &  0\\ b_{21}-b_{22}x  & 0\end{matrix}\right)+\left(\begin{matrix} (b_{12}-yb_{22})x & b_{12}-yb_{22} \\ 0  & 0\end{matrix}\right).\nonumber
\end{eqnarray}
In addition, $R\left(\left(\begin{matrix} y(b_{21}-b_{22}x) &  0\\ b_{21}-b_{22}x  & 0\end{matrix}\right)\right)=R((I-\tilde{Q})BQ))=R(BQ-\tilde{Q}BQ)=R(BQ-A)\subseteq R(B),$ if and only if $R(A)\subseteq R(B).$ Similarly, $R\left(\left(\begin{matrix} (b_{12}-yb_{22}x) & b_{12}-yb_{22}x \\ 0  & 0\end{matrix}\right)^*\right)\subseteq R(B^*)$ if and only if $R(A^*)\subseteq R(B^*).$ Therefore, $1\Leftrightarrow 2$.  

To prove $1\Leftrightarrow 3$, assume that there exist two projections $\tilde{Q},Q$ such that  $A=\tilde{Q}BQ$. Then $BQ=A+(I-\tilde{Q})BQ,$ and so $R((I-\tilde{Q})BQ)\subseteq R(B)$ if and only if $R(A)\subseteq R(B).$ In the same way, $B^*\tilde{Q}^*=A^*+(I-Q^*)B^*\tilde{Q}^*$, and so $R((\tilde{Q}B(I-Q))^*)\subseteq R(B^*)$ if and only if $R(A^*)\subseteq R(B^*).$

\end{proof}


\section{Operator orders and inner inverses}\label{inverses}

 Throughout, given $T\in L(\cH, \cK)$, $T[1]$ denotes the set of densely defined operators $T^-: \cD(T^-)\subseteq \cK\rightarrow \cH$  satisfying $R(T)\subseteq \cD(T^-)$ and $TT^-T = T$. The operator $T^-$ is called an inner inverse of $T$. The subset of $T[1]$ of those operators satisfying $T^-=T^-TT^-$ is denoted by $T[1,2].$ Observe that $T^\dagger$ is the unique element in $T[1,2]$ with $\cD(T^\dagger)=R(T)\oplus N(T^*),$  $N(T^\dagger)=N(T^*)$ and $R(T^\dagger)=\overline{R(T^*)}.$ If $T$ admits bounded inner inverses, then we say that $T$ is {\it relatively regular}. An operator $T$ is relatively regular if and only if $T$ has closed range. The aim of this section is to study operator orders by means of inner inverses. The first result we present in this direction follows from Proposition \ref{order**}:

\begin{cor}
	Let $A,B\in L(\cH, \cK)$ with closed ranges. The following conditions are equivalent:
	\begin{enumerate}
	\item $A\overset{*}{\leq} B$;
	\item $A^\dagger \overset{*}{\leq} B^\dagger$.
	\end{enumerate}
\end{cor}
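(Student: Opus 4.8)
The plan is to reduce the statement to Corollary \ref{order**}, using the fact that for closed-range operators the Moore-Penrose inverse behaves well with respect to the relevant orthogonal decompositions. First I would record the block structure of $A^\dagger$ and $B^\dagger$ with respect to the decompositions $\cH=\overline{R(A^*)}\oplus N(A)$ and $\cK=\overline{R(A)}\oplus N(A^*)$. Since $A$ has closed range and $A=\left(\begin{smallmatrix} a & 0 \\ 0 & 0\end{smallmatrix}\right)$ with $a\in L(\overline{R(A^*)},\overline{R(A)})$ invertible, we have $A^\dagger=\left(\begin{smallmatrix} a^{-1} & 0 \\ 0 & 0\end{smallmatrix}\right)$ as an operator from $\cK=\overline{R(A)}\oplus N(A^*)$ to $\cH=\overline{R(A^*)}\oplus N(A)$. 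Crucially, $\overline{R((A^\dagger)^*)}=\overline{R(A)}$ and $N(A^\dagger)=N(A^*)$, so the natural decompositions for $A^\dagger$ coincide (after swapping the roles of $\cH$ and $\cK$) with those for $A$, and the corner block of $A^\dagger$ in this representation is exactly $a^{-1}$.

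Next I would apply Corollary \ref{order**} in the forward direction: $A\overset{*}{\leq}B$ means $B=\left(\begin{smallmatrix} a & 0 \\ 0 & b_{22}\end{smallmatrix}\right)$ in the above decompositions. Since $B$ has closed range, both $a$ and $b_{22}$ have closed range, and $a$ is in fact invertible; a direct computation (or the standard formula for the Moore-Penrose inverse of a block-diagonal operator with orthogonal blocks) gives $B^\dagger=\left(\begin{smallmatrix} a^{-1} & 0 \\ 0 & b_{22}^\dagger\end{smallmatrix}\right)$, now viewed with respect to $\cK=\overline{R(A)}\oplus N(A^*)$ and $\cH=\overline{R(A^*)}\oplus N(A)$ — because $B$ being block-diagonal forces $\overline{R(B)}=\overline{R(A)}\oplus\overline{R(b_{22})}$ and $N(B^*)=N(A^*)\cap N(b_{22}^*)$, so $\overline{R(A)}\subseteq\overline{R(B)}$ and $N(B^*)^\perp\supseteq\overline{R(A)}$ are compatible with the decomposition for $A^\dagger$. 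Thus in the decompositions adapted to $A^\dagger$, namely $\cK=\overline{R((A^\dagger)^*)}\oplus N(A^\dagger)=\overline{R(A)}\oplus N(A^*)$ and $\cH=\overline{R(A^\dagger)}\oplus N((A^\dagger)^*)=\overline{R(A^*)}\oplus N(A)$, the operator $B^\dagger$ has the block-diagonal form $\left(\begin{smallmatrix} a^{-1} & 0 \\ 0 & b_{22}^\dagger\end{smallmatrix}\right)$ whose corner block agrees with that of $A^\dagger$. Applying Corollary \ref{order**} once more (to the pair $A^\dagger, B^\dagger$) yields $A^\dagger\overset{*}{\leq}B^\dagger$.

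For the converse, I would simply invoke the involutive nature of the construction: if $A^\dagger\overset{*}{\leq}B^\dagger$, then since $A,B$ have closed range we have $(A^\dagger)^\dagger=A$ and $(B^\dagger)^\dagger=B$, and $A^\dagger,B^\dagger$ also have closed range, so the forward implication already proved (applied to $A^\dagger$ and $B^\dagger$ in place of $A$ and $B$) gives $A=(A^\dagger)^\dagger\overset{*}{\leq}(B^\dagger)^\dagger=B$. This closes the equivalence.

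The main obstacle I anticipate is bookkeeping rather than anything deep: one must be careful that the orthogonal space decompositions used to read off the matrix form of $B$ in Corollary \ref{order**} are the same ones (up to the $\cH\leftrightarrow\cK$ swap) as those adapted to $A^\dagger$ and $B^\dagger$, i.e., that $\overline{R((A^\dagger)^*)}=\overline{R(A)}$, $N(A^\dagger)=N(A^*)$, and that passing from $B$ to $B^\dagger$ preserves the block-diagonal structure with the corner block inverted. All of these are standard facts about the Moore-Penrose inverse of a closed-range operator, but they should be stated explicitly so that Corollary \ref{order**} can legitimately be applied to $(A^\dagger,B^\dagger)$. No genuinely new estimate or construction is needed.
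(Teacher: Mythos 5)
Your proposal is correct and follows essentially the same route as the paper: apply Corollary \ref{order**} to write $B$ in block-diagonal form, observe that $B^\dagger$ is then block-diagonal with corner block $a^\dagger=a^{-1}$, and apply Corollary \ref{order**} again to the pair $(A^\dagger,B^\dagger)$. Your extra care about matching the decompositions adapted to $A^\dagger$, and your explicit treatment of the converse via $(A^\dagger)^\dagger=A$, only make explicit what the paper leaves implicit.
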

\begin{proof}
	If $A,B$ have closed ranges and $A\overset{*}{\leq} B$ then, by Proposition \ref{order**},  $B=\left(\begin{matrix} a & 0 \\ 0 & b_{22}\end{matrix}\right)$. Hence $B^\dagger= \left(\begin{matrix} a^\dagger & 0 \\ 0 & b^\dagger_{22}\end{matrix}\right) $ and, again applying  Proposition \ref{order**}, we get that $A^\dagger \overset{*}{\leq} B^\dagger$. 
\end{proof}	

The next result can be found in \cite[Theorem 3.7]{MR3063879} for $A,B$ relatively regular bounded linear operators on Banach spaces.

\begin{prop}\label{minusinnerinverse} Let $A,B\in L(\cH, \cK).$ Then, $A\overset{-}{\leq}B$ if and only if $A\overset{s}{\leq}B$ and $B[1]\cap A[1]$ is not empty. 
\end{prop}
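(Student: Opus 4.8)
The plan is to prove both implications using the matrix characterization of the minus order from Proposition~\ref{minusmatrix}, together with Douglas' lemma to supply the inner inverses. For the forward direction, assume $A\overset{-}{\leq}B$. Then certainly $A\overset{s}{\leq}B$, since Proposition~\ref{minusmatrix} (or directly the definition via $A=\tilde QB=BQ$) gives $R(A)\subseteq R(B)$ and $R(A^*)\subseteq R(B^*)$. It remains to produce a common inner inverse. By Proposition~\ref{minusmatrix} we may write, with respect to $\cH=\overline{R(A^*)}\oplus N(A)$ and $\cK=\overline{R(A)}\oplus N(A^*)$,
\[
A=\left(\begin{matrix} a & 0 \\ 0 & 0\end{matrix}\right),\qquad
B=\left(\begin{matrix} a+yb_{22}x & yb_{22} \\ b_{22}x & b_{22}\end{matrix}\right),
\]
and the associated projections $\tilde Q=\left(\begin{smallmatrix} 1 & -y \\ 0 & 0\end{smallmatrix}\right)$, $Q=\left(\begin{smallmatrix} 1 & 0 \\ -x & 0\end{smallmatrix}\right)$ satisfy $A=\tilde QB=BQ$. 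The natural candidate for the common inner inverse is an operator $G$ supported so as to invert the ``$a$-block'': one checks that $G:=Q\,A^\dagger\,\tilde Q$ (suitably interpreted, with $\cD(G)\supseteq R(B)$) satisfies $BGB=B$ and $AGA=A$. Indeed $AGA = A(QA^\dagger\tilde Q)A = (AQ)A^\dagger(\tilde QA) = A A^\dagger A = A$ since $AQ=A$ and $\tilde QA=A$ (the latter because $R(A)=R(\tilde Q)$, so $\tilde Q$ fixes $R(A)$); and $BGB = (B Q)A^\dagger(\tilde Q B) = A A^\dagger A = A$ — wait, this gives $A$, not $B$, so the candidate must be corrected. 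The right choice is rather $G := Q A^\dagger \tilde Q$ as an inner inverse built from the decomposition of $B$ directly: writing $B$ in the block form above one verifies by a direct block computation that $G=\left(\begin{smallmatrix} a^\dagger & 0 \\ 0 & 0\end{smallmatrix}\right)$ composed on left/right with the projection blocks reproduces $B$. I would carry this out by the cleaner route below.

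Here is the cleaner route I would actually follow. Forward direction: since $A\overset{-}{\leq}B$, by Lemma~1(3) choose $\tilde Q,Q\in\cQ$ with $A=\tilde QB=BQ$, $R(\tilde Q)=\overline{R(A)}$, $N(Q)=N(A)$. Take any $A^-\in A[1]$ — for instance $A^\dagger$, which is bounded or densely defined with $R(A)\subseteq\cD(A^\dagger)$. Set $G:=QA^\dagger\tilde Q$. Then $R(B)\subseteq\cD(\tilde Q)=\cK$ handles the domain condition, and
\[
BGB=B(QA^\dagger\tilde Q)B=(BQ)A^\dagger(\tilde QB)=A\,A^\dagger\,A=A\ne B
\]
in general, so $G\in A[1]$ but \emph{not} $B[1]$. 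The fix: replace $A^\dagger$ by a suitable inner inverse of the compression. Concretely, decompose as above; then $B$ has the form $\left(\begin{smallmatrix} a & 0\\0&0\end{smallmatrix}\right)+\left(\begin{smallmatrix} y\\ 1\end{smallmatrix}\right)b_{22}\left(\begin{smallmatrix} x & 1\end{smallmatrix}\right)$ after the change of variables $b_{22}x\mapsto$ etc. The operator $G:=\left(\begin{smallmatrix}1&0\\-x&0\end{smallmatrix}\right)\left(\begin{smallmatrix} a^\dagger & 0\\0&b_{22}^\dagger\end{smallmatrix}\right)\left(\begin{smallmatrix}1&-y\\0&1\end{smallmatrix}\right)$-type expression simultaneously inverts $A$ and $B$; the verification is a finite block computation using $aa^\dagger a=a$, $b_{22}b_{22}^\dagger b_{22}=b_{22}$. (If $b_{22}$ lacks an inner inverse one still proceeds since one only needs $B[1]$ nonempty, which together with $A\overset{s}{\leq}B$ is what the statement asks — but note the statement requires $B[1]\cap A[1]\ne\emptyset$, so one needs $B$ to admit some inner inverse with the extra property; this is available because the block structure reduces the problem to inverting $a$, which is injective with dense range on $\overline{R(A^*)}$.)

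For the converse, suppose $A\overset{s}{\leq}B$ and there is $C\in B[1]\cap A[1]$. From $R(A)\subseteq R(B)$ and Douglas' lemma there is $X$ with $A=BX$; from $R(A^*)\subseteq R(B^*)$ there is $Y$ with $A=YB$. Now set $\tilde Q:=ACB$ compressed appropriately and $Q:=BCA$-type operators; more precisely, define $Q:=CA$ restricted suitably and show $Q^2=Q$ using $ACA=A$: indeed $(CA)(CA)=C(ACA)C\cdots$ needs care, so instead use $Q:=C A$ where $C\in A[1,2]$ may be assumed (pass from $C$ to $CAC\in A[1,2]\cap B[1]$, checking $CAC$ is still an inner inverse of $B$ because $B(CAC)B=(BC)(AC)B$; here one uses $A\overset{s}{\leq}B$ to relate $BC$ and $AC$). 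Then $Q=CA$ is idempotent, $R(1-Q)\subseteq N(A)$... and $BQ = BCA$; using $A=YB$ one gets $BCA=BCYB$, and $BCB=B$ forces — with $R(A^*)\subseteq R(B^*)$ — that $BCA=A$. Symmetrically define $\tilde Q=AC$ and show $\tilde QB=A$. Hence $A=\tilde QB=BQ$ with $\tilde Q,Q$ idempotent, i.e. $A\overset{-}{\leq}B$. The main obstacle, in both directions, is bookkeeping the non-closed-range / unbounded inner-inverse subtleties: one must verify domains ($R(B)\subseteq\cD(\cdot)$) and that the constructed operators are genuinely idempotent and genuinely inner inverses, since $A[1,2]$-normalization ($C\mapsto CAC$) may a priori destroy membership in $B[1]$ — this is exactly where the hypothesis $A\overset{s}{\leq}B$ is essential and must be invoked carefully.
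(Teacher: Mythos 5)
There are genuine gaps in both directions. In the forward direction you never produce a verified common inner inverse, and the explicit candidate you write down fails: taking the left factor to be the projection $\left(\begin{smallmatrix}1&0\\ -x&0\end{smallmatrix}\right)$ annihilates the $b_{22}^\dagger$ block of the middle factor, and a direct block computation then gives $BGB=A\neq B$ — the same defect you already noticed in your first candidate. (A working block inverse would need an invertible left factor such as $\left(\begin{smallmatrix}1&0\\ -x&1\end{smallmatrix}\right)$, plus a domain discussion when $b_{22}$ does not have closed range.) The idea you are missing is that no construction is needed at all: if $A=\tilde QB=BQ$, then for \emph{any} $B^-\in B[1]$ one has $AB^-A=\tilde Q(BB^-B)Q=\tilde QBQ=\tilde QA=A$, so every inner inverse of $B$ is automatically an inner inverse of $A$, and $B[1]$ is nonempty since it contains $B^\dagger$. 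This is exactly the paper's one-line argument for this implication.

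In the converse, your intended endgame ($Q:=CA$, $\tilde Q:=AC$ for a common inner inverse $C$) is the right shape, but two steps are wrong or unaddressed. First, ``$BCA=BCYB$ and $BCB=B$ forces $BCA=A$'' does not follow, because $Y$ sits between $C$ and $B$; you must use the \emph{other} Douglas factor, $A=BX$, to get $BCA=BCBX=BX=A$ (and use $A=YB$ for $ACB=YBCB=YB=A$). Second, elements of $B[1]$ are allowed to be unbounded, densely defined operators, so $CA$ and $AC$ are not automatically bounded projections in $L(\cH)$ and $L(\cK)$. The paper sidesteps both issues by first proving (Lemma~4.3) that $A\overset{s}{\leq}B$ together with $B[1]\cap A[1]\neq\emptyset$ forces $B[1]\subseteq A[1]$, in particular $B^\dagger\in A[1]$, and then taking $Q=B^\dagger A$ and the adjoint of $(B^*)^\dagger A^*$, which are bounded because $R(A)\subseteq R(B)\subseteq\cD(B^\dagger)$. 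Your $A[1,2]$-normalization detour ($C\mapsto CAC$) is unnecessary once the argument is organized this way.
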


To prove Proposition \ref{minusinnerinverse}, we need the following lemma:

\begin{lem}\label{innerinv} Let $A,B\in L(\cH, \cK)$ such that $A\overset{s}{\leq}B$. If $B[1]\cap A[1]$ is not empty then $B[1]\subseteq A[1].$ 
\end{lem}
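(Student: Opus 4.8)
The goal is to show that if $A\overset{s}{\leq}B$ and $C\in B[1]\cap A[1]$, then every $D\in B[1]$ also belongs to $A[1]$, i.e.\ $ADA=A$ (with the appropriate domain condition $R(A)\subseteq\cD(D)$, which is automatic since $R(A)\subseteq R(B)\subseteq\cD(D)$). The natural strategy is to exploit the space pre-order: since $R(A)\subseteq R(B)$ and $R(A^*)\subseteq R(B^*)$, Douglas' lemma gives bounded operators $U,V$ with $A=BU$ and $A^*=B^*V$, i.e.\ $A=V^*B$. So I would first record $A=V^*BU$ for suitable bounded $U,V$.

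The core computation is then a sandwiching argument. For any $D\in B[1]$ I want to show $ADA=A$. Write $ADA=V^*BU\,D\,BU$ and try to replace the inner factor $BUDB$ or similar by $B$ using the hypothesis that some $C$ is a common inner inverse. The cleaner route is: since $C\in A[1]$, $ACA=A$; since $C,D\in B[1]$, we have $BCB=B=BDB$. Now compute
\[
ADA = (ACA)D(ACA) = AC(ADA)CA,
\]
which is circular, so instead I would push $A$ through $B$. Using $A=BU$ and $A=V^*B$:
\[
ADA = A D (BU) = A D B U,\qquad \text{and}\qquad ACA = A C B U = A.
\]
So it suffices to show $ADB = ACB$. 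Similarly, using $A=V^*B$ on the left, $ADB=V^*BDB=V^*B=A\cdot(\text{something})$; more precisely $BDB=B=BCB$, hence $ADB = V^*BDB = V^*B = V^*BCB = ACB$. Therefore $ADA = ADBU = ACBU = ACA = A$, which is exactly $D\in A[1]$. The domain requirement $R(A)\subseteq\cD(D)$ holds because $R(A)\subseteq R(B)\subseteq\cD(D)$.

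I expect the main obstacle to be bookkeeping about domains rather than the algebra: the inner inverses here are only densely defined, so I must be careful that every composition written down (e.g.\ $V^*BDB$, $ADBU$) is legitimate — in particular that $R(B)\subseteq\cD(D)$ and $R(BU)=R(A)\subseteq R(B)\subseteq\cD(D)$, so that $BDB$ and $ADB$ make sense as everywhere-defined-on-the-relevant-range operators, and that boundedness of $U$ and $V^*$ is used only to compose on the outside (no domain issue there). Once the identities $BCB=B=BDB$, $A=BU$ and $A=V^*B$ are in place, the substitution $ADA = V^*(BDB)U = V^*BU = A$ is immediate — so the proof reduces to a one-line computation $ADA = V^*(BDB)U = V^*BU = A$, valid for every $D\in B[1]$, giving $B[1]\subseteq A[1]$.
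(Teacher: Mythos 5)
Your proof is correct and is essentially the paper's own argument: Douglas' lemma gives bounded $U,V$ with $A=BU=V^*B$, and then $ADA=V^*(BDB)U=V^*BU=V^*(BCB)U=ACA=A$, with the domain issue handled by $R(A)\subseteq R(B)\subseteq\cD(D)$. One small caution: the equality $V^*BU=A$ in your closing ``one-line computation'' is \emph{not} immediate from $A=BU$ and $A=V^*B$ alone --- it is precisely where the common inner inverse $C$ must be inserted ($V^*BU=V^*BCBU=ACA=A$), as your fuller chain of equalities correctly does.
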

\begin{proof} If $A\overset{s}{\leq}B$ then, by Douglas's theorem, there exist $X\in L(\cK)$ and $Y\in L(\cH)$ such that $A=BX$ and $A^*=B^*Y^*.$ Furthermore, as $B[1]\cap A[1]$ is not empty then there exists $B^-\in B[1]$ such that $AB^-A=A.$  Consider $B^{=}\in B[1]$ then, since $R(A)\subseteq R(B)\subseteq \cD(B^{=}),$ $AB^{=}A=YB B^{=} BX=YB B^{-} BX=AB^{-}A=A,$ i.e., $B^{=}\in A[1].$
\end{proof}

\begin{proof*}
 Let $A\overset{-}{\leq}B$. Hence, there are  projections $\tilde{Q}$ and $Q$ such that $A = \tilde{Q} B=BQ.$ Thus, $A\overset{s}{\leq}B$. Moreover, let $B^-\in B[1]$, i.e., $B B^- B=B.$ Then, $A B^-A=\tilde{Q}B B^-B Q=\tilde{Q} B Q=A,$ and so $B^-\in A[1].$

Conversely, assume that  $A\overset{s}{\leq}B$ and $B[1]\cap A[1]$ is not empty. Define $Q:=B^\dagger A.$ then $Q\in L(\cH)$ since $R(A)\subseteq R(B).$ From Lemma \ref{innerinv}, it follows that $B^\dagger\in A[1]$ so that $AB^\dagger A=A.$ Therefore, $Q^2=B^\dagger AB^\dagger A=B^\dagger A=Q,$ and $Q$ is a projection. Also,
\begin{equation}\label{1}
BQ=BB^\dagger A=P_{\overline{R(B)}}|_{\cD(B^\dagger)}A=A,
\end{equation} 
because $R(A)\subseteq R(B).$

On the other hand, define $\tilde{Q}:=(B^*)^\dagger A^*\in L(\cK)$. Again by Lemma \ref{innerinv}, $\tilde{Q}^2=(B^*)^\dagger A^*(B^*)^\dagger A^*= (B^*)^\dagger A^*=\tilde{Q}.$ In this case,
 \begin{equation}\label{2}
B^* \tilde{Q}=B^*(B^*)^\dagger A^*=P_{\overline{R(B^*)}}|_{\cD((B^*)^\dagger)}A^*=A^*.
\end{equation} 
From (\ref{1}) and (\ref{2}), it follows that $A\overset{-}{\leq}B$. \hspace{250pt} $\square$

\end{proof*}

We recommend \cite[Proposition 3.8 and Corollary 3.19]{MR3682701}, for other characterizations of the minus order in terms of inner inverses.

We next study the equality $AA^*A =AB^*A$ involved in the diamond order in terms of inner inverses.
This result can be found for regular elements in rings in \cite[Lemma 5]{MR3175420}.

\begin{prop}\label{inndiamond}
Let $A,B\in L(\cH, \cK)$ . Then the following conditions are equivalent:
\begin{enumerate}
\item $AA^*A =AB^*A;$
\item $A^\dagger B A^\dagger=A^\dagger$ on $\cD(A^\dagger);$
\item $A^\dagger B A^\dagger \in A[1,2].$
\end{enumerate}
\end{prop}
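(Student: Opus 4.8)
The plan is to prove the chain of equivalences $1 \Leftrightarrow 2 \Leftrightarrow 3$. Recall that $A^\dagger$ is the Moore-Penrose inverse, with $\mathcal{D}(A^\dagger) = R(A) \oplus N(A^*)$, $N(A^\dagger) = N(A^*)$, $R(A^\dagger) = \overline{R(A^*)}$, and satisfying $AA^\dagger A = A$, $A^\dagger A A^\dagger = A^\dagger$, with $AA^\dagger = P_A|_{\mathcal{D}(A^\dagger)}$ and $A^\dagger A = P_{A^*}$ (where $P_A, P_{A^*}$ denote the orthogonal projections onto $\overline{R(A)}$ and $\overline{R(A^*)}$). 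These identities will be used freely.

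\textbf{$1 \Rightarrow 2$.} Starting from $AA^*A = AB^*A$, I would left-multiply by $(A^\dagger)^* A^\dagger$ and right-multiply by $A^\dagger (A^\dagger)^*$, then simplify using $A^\dagger A A^\dagger = A^\dagger$ and its adjoint-type analogue $(A^\dagger)^* A^* (A^\dagger)^* = (A^\dagger)^*$. A cleaner route: from Lemma~1(2), $AA^*A = AB^*A$ is equivalent to $P_A B P_{A^*} = A$, i.e.\ $AA^\dagger \, B \, A^\dagger A = A$ after restricting to the appropriate domain. Left-multiplying this by $A^\dagger$ gives $A^\dagger A A^\dagger B A^\dagger A = A^\dagger A$, and since $A^\dagger A A^\dagger = A^\dagger$ and $A^\dagger A$ acts as the identity on $R(A^\dagger) = \overline{R(A^*)} \supseteq R(A^\dagger B A^\dagger)$, this collapses to $A^\dagger B A^\dagger = A^\dagger$ on $\mathcal{D}(A^\dagger)$. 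Care must be taken with domains, since $A^\dagger$ and $B^\dagger$ may be unbounded; however $R(A) \subseteq \mathcal{D}(A^\dagger)$ ensures all compositions appearing are legitimate on $\mathcal{D}(A^\dagger)$.

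\textbf{$2 \Rightarrow 1$.} Given $A^\dagger B A^\dagger = A^\dagger$ on $\mathcal{D}(A^\dagger)$, I would left-multiply by $A$ and right-multiply by $A$: $A A^\dagger B A^\dagger A = A A^\dagger A = A$, i.e.\ $P_A|_{\mathcal{D}(A^\dagger)} \, B \, P_{A^*} = A$, which is $P_A B P_{A^*} = A$, and by Lemma~1(2) this is exactly $AA^*A = AB^*A$.

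\textbf{$2 \Leftrightarrow 3$.} Set $X := A^\dagger B A^\dagger$. For $3 \Rightarrow 2$: if $X \in A[1,2]$ then in particular $AXA = A$ and $XAX = X$; combined with $R(X) \subseteq R(A^\dagger) = \overline{R(A^*)}$ and $N(X) \supseteq N(A^\dagger) = N(A^*)$ (both automatic from the form $A^\dagger(\cdot)A^\dagger$), one recovers the four Moore-Penrose-type identities that characterize $A^\dagger$ uniquely within $A[1,2]$ under these range/nullspace constraints, forcing $X = A^\dagger$. For $2 \Rightarrow 3$: if $X = A^\dagger$ on $\mathcal{D}(A^\dagger)$ then trivially $X \in A[1,2]$ since $A^\dagger \in A[1,2]$. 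The main subtlety throughout — and the step I expect to require the most care — is the bookkeeping of domains of the unbounded operators $A^\dagger$ and $B^\dagger$: one must check that every product such as $A^\dagger B A^\dagger$ is defined at least on $\mathcal{D}(A^\dagger)$ and that the cancellation identities remain valid there, which hinges on the inclusions $R(A) \subseteq R(B) \cup \mathcal{D}(B^\dagger)$ not being assumed here — so in fact $B^\dagger$ need not enter at all, and the argument is best phrased purely in terms of $A^\dagger$, $AA^\dagger$, and $A^\dagger A$.
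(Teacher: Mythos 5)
Your argument is essentially the paper's: both reduce condition 1 to $P_ABP_{A^*}=A$ (item 2 of Lemma 2.2), rewrite $P_A=AA^\dagger$ on $\cD(A^\dagger)$ and $P_{A^*}=A^\dagger A$, and then cancel using $A^\dagger AA^\dagger=A^\dagger$ together with the fact that $R(A^\dagger BA^\dagger)\subseteq \overline{R(A^*)}$. The only structural difference is that the paper closes the cycle $1\Rightarrow 2\Rightarrow 3\Rightarrow 1$, so that $3\Rightarrow 2$ comes for free, whereas you prove $3\Rightarrow 2$ directly via the uniqueness of $A^\dagger$ among elements of $A[1,2]$ with range in $\overline{R(A^*)}$ and nullspace containing $N(A^*)$; that is correct but does more work than necessary.

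One justification is off, and it sits precisely at the point you flag as the delicate one. For $A^\dagger BA^\dagger$ to be defined on $\cD(A^\dagger)$ in the step $1\Rightarrow 2$ you need $B(\overline{R(A^*)})\subseteq\cD(A^\dagger)=R(A)\oplus N(A^*)$, and the inclusion $R(A)\subseteq\cD(A^\dagger)$ that you invoke does not deliver this: for a general $B$ there is no reason for $B$ to carry $\overline{R(A^*)}=R(A^\dagger)$ into $\cD(A^\dagger)$. The correct source is hypothesis 1 itself: from $P_ABP_{A^*}=A$ one gets $BP_{A^*}x=Ax+(I-P_A)BP_{A^*}x\in R(A)+R(A)^{\perp}=\cD(A^\dagger)$ for every $x\in\cH$, which is exactly how the paper opens its proof. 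With that one substitution your argument is complete.
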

\begin{proof}
$1\Rightarrow 2$. Let $AA^*A =AB^*A,$ or, equivalently, $A=P_A BP_{A^*}.$ Then, $BP_{A^*}x\in R(A)+R(A)^\bot$ for all $x\in\cH.$ Thus, $BP_{A^*}x\in \cD(A^\dagger)$ for all $x\in\cH,$ and so $AA^\dagger B A^\dagger A=A;$ i.e., $A^\dagger BA^\dagger\in A[1].$ Moreover, $A^\dagger BA^\dagger=A^\dagger AA^\dagger BA^\dagger AA^\dagger=A^\dagger AA^\dagger=A^\dagger$, as desired.

$2\Rightarrow 3$. Trivial.

$3\Rightarrow 1$. Suppose that $A^\dagger B A^\dagger \in A[1,2]$ . Then,    $A=AA^\dagger B A^\dagger A=P_{\overline{R(A)}}|_{\cD(A^\dagger)}BP_{A^*}=P_A BP_{A^*}$ .
\end{proof}

The next relationship between the minus order and diamond order comes from \cite{MR1048800}.  We include the proof for completeness.

\begin{cor}\label{diamondminus}
Let $A,B\in L(\cH, \cK)$ with closed ranges. The following conditions are equivalent:
\begin{enumerate}
\item $A\overset{\diamond}{\leq}B$;
\item $A^\dagger\overset{-}{\leq}B^\dagger$.
\end{enumerate}
\end{cor}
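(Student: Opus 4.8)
The plan is to prove the equivalence $A\overset{\diamond}{\leq}B \iff A^\dagger\overset{-}{\leq}B^\dagger$ for $A,B$ with closed range by combining two of the characterizations already established: Proposition \ref{inndiamond} (which reformulates the condition $AA^*A=AB^*A$ through inner inverses of $A$) and Proposition \ref{minusinnerinverse} together with Lemma \ref{innerinv} (which handles the minus order via the space pre-order and shared inner inverses). Since all ranges are closed, every Moore--Penrose inverse in sight is bounded and everywhere defined, which removes the domain bookkeeping that appears in the general statements.

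First I would unwind the diamond order into its two defining pieces: $A\overset{\diamond}{\leq}B$ means $A\overset{s}{\leq}B$ (that is $R(A)\subseteq R(B)$ and $R(A^*)\subseteq R(B^*)$) together with $AA^*A=AB^*A$. The key translation step is to show that, for closed-range operators, $A\overset{s}{\leq}B$ is equivalent to $A^\dagger\overset{s}{\leq}B^\dagger$; this is immediate because $R(A^\dagger)=R(A^*)$, $R((A^\dagger)^*)=R(A)$, and the analogous identities for $B$, so the two range inclusions simply swap roles. Next, by Proposition \ref{inndiamond}, the identity $AA^*A=AB^*A$ is equivalent to $A^\dagger B A^\dagger\in A[1,2]$; taking adjoints this is the same as $(A^*)^\dagger A^* (A^*)^\dagger \in (A^*)[1,2]$, but I will instead read it directly as the statement that $A^\dagger$ has an inner inverse, namely $(A^\dagger)^- := B$, that simultaneously... here I need to be careful: $A^\dagger B A^\dagger \in A[1,2]$ says $B$ composed appropriately is an inner inverse of $A$, which by the standard duality for Moore--Penrose inverses translates to $B^\dagger$ being related to $A^\dagger$. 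The cleanest route is: $A^\dagger B A^\dagger\in A[1]$ means $A A^\dagger B A^\dagger A = A$, i.e. $P_{R(A)} B P_{R(A^*)} = A$; taking Moore--Penrose inverses and using closed ranges, one shows $B^\dagger$ is an inner inverse of $A^\dagger$. Then Proposition \ref{minusinnerinverse} applied to the pair $(A^\dagger, B^\dagger)$ gives $A^\dagger\overset{-}{\leq}B^\dagger$ once we also know $A^\dagger\overset{s}{\leq}B^\dagger$, which we have.

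Concretely, the forward direction runs as follows. Assume $A\overset{\diamond}{\leq}B$. Then $A^\dagger\overset{s}{\leq}B^\dagger$ by the range-swap observation, so in particular $B^\dagger[1]\subseteq A^\dagger[1]$ will follow from Lemma \ref{innerinv} once we exhibit a single common inner inverse; that common inner inverse should be produced from the diamond condition. From Proposition \ref{inndiamond}, $A^\dagger B A^\dagger\in A[1,2]$, and a short computation using $B B^\dagger B = B$ and the range inclusions shows that $B$ itself lies in $A^\dagger[1]$ (one checks $A^\dagger B A^\dagger B A^\dagger = A^\dagger$ reduces to $A^\dagger B A^\dagger = A^\dagger$ on $\mathcal D(A^\dagger)$, which is exactly condition 2 of Proposition \ref{inndiamond}; dualizing, $B^\dagger\in (A^\dagger)[1]$ as well). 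Hence $B^\dagger[1]\cap (A^\dagger)[1]\neq\emptyset$, and Proposition \ref{minusinnerinverse} yields $A^\dagger\overset{-}{\leq}B^\dagger$. For the converse, assume $A^\dagger\overset{-}{\leq}B^\dagger$; by Proposition \ref{minusinnerinverse} this gives $A^\dagger\overset{s}{\leq}B^\dagger$ (hence $A\overset{s}{\leq}B$ by the range swap, now applied in reverse using $A^{\dagger\dagger}=A$) and $(B^\dagger)[1]\cap(A^\dagger)[1]\neq\emptyset$; unpacking a common inner inverse and taking adjoints/Moore--Penrose inverses recovers $A^\dagger B A^\dagger \in A[1,2]$, so Proposition \ref{inndiamond} gives $AA^*A=AB^*A$, and with the space pre-order in hand we conclude $A\overset{\diamond}{\leq}B$.

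The main obstacle I anticipate is the bookkeeping in passing between ``$X$ is an inner inverse of $A$'' and ``$X^\dagger$-type conditions on $A^\dagger$'' — specifically, verifying carefully that $B\in (A^\dagger)[1]$ is genuinely equivalent to $A^\dagger B A^\dagger\in A[1,2]$, and that the common-inner-inverse statement survives the duality. Because the ranges are closed this is purely algebraic (all projections $P_{\overline{R(A)}}=P_A=AA^\dagger$, $P_{\overline{R(A^*)}}=A^\dagger A$ are bounded and everywhere defined, and $(A^\dagger)^\dagger=A$), so the risk is clerical rather than conceptual: one must track which of $A$, $A^*$, $A^\dagger$, $(A^\dagger)^*$ the various range inclusions refer to. I would organize the proof so that the range-swap lemma ($A\overset{s}{\leq}B \iff A^\dagger\overset{s}{\leq}B^\dagger$) is isolated first, then invoke Propositions \ref{inndiamond} and \ref{minusinnerinverse} as black boxes, keeping the remaining work to a couple of lines of identity-chasing in each direction.
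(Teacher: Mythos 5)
Your proposal is correct and takes essentially the same route as the paper: both translate $AA^*A=AB^*A$ into $A^\dagger B A^\dagger=A^\dagger$ via Proposition \ref{inndiamond}, so that $B$ is a common inner inverse of $A^\dagger$ and $B^\dagger$, and then invoke Proposition \ref{minusinnerinverse} (with Lemma \ref{innerinv} for the converse) together with the range swap $R(A^\dagger)=R(A^*)$, $R((A^\dagger)^*)=R(A)$. The only blemish is the parenthetical ``dualizing, $B^\dagger\in A^\dagger[1]$,'' which is not what is needed (and not true in general); the relevant and correctly established fact is $B\in B^\dagger[1]\cap A^\dagger[1]$.
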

\begin{proof}
 Let $A\overset{\diamond}{\leq}B$ then $R(A)\subseteq R(B),$ $R(A^*)\subseteq R(B^*)$. Also, by Proposition \ref{inndiamond}, $A^\dagger=A^\dagger B A^\dagger=A^\dagger (B^\dagger)^\dagger A^\dagger;$ i.e., $B^\dagger\in A^\dagger[1].$ Therefore, by Proposition \ref{minusinnerinverse}, $A^\dagger\overset{-}{\leq}B^\dagger$. Conversely, if $A^\dagger\overset{-}{\leq}B^\dagger$ then $R(A)\subseteq R(B),$ $R(A^*)\subseteq R(B^*)$ and, by the proof of Proposition \ref{minusinnerinverse}, $B=(B^\dagger)^\dagger\in A^\dagger[1]$. Thus,  $A^\dagger B A^\dagger=A^\dagger$ or, equivalently, by Proposition \ref{inndiamond}, $AA^*A =AB^*A.$ Hence, $A\overset{\diamond}{\leq}B$.

\end{proof}

We turn to characterizing the equality $A = \tilde{Q}BQ$ for projections $Q,\tilde{Q}$ involved in the plus order in terms of inner inverses.

\begin{prop}
Let $A,B\in L(\cH, \cK)$ . Then the following conditions are equivalent:
\begin{enumerate}
\item There are two projections $\tilde{Q},Q$ such that $A = \tilde{Q}BQ;$
\item There exists $A^- \in A[1,2]$ such that $A^- B A^-=A^-$ on $\cD(A^-);$
\item There exists $A^- \in A[1,2]$ such that $A^- B A^- \in A[1,2]$.
\end{enumerate}
\end{prop}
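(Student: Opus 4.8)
The plan is to prove the cycle $1\Rightarrow 2\Rightarrow 3\Rightarrow 1$, modeling the argument on the proof of Proposition~\ref{inndiamond} but keeping track of the two (possibly different) projections on the two sides.

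\medskip

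\noindent\textbf{Step $1\Rightarrow 2$.} Suppose $A=\tilde Q B Q$ for oblique projections $\tilde Q,Q$. By Lemma~1.2(3) (the analogue stated in the definition of the plus order) we may assume $R(\tilde Q)=\overline{R(A)}$ and $R(Q^*)=\overline{R(A^*)}$, equivalently $N(Q)\subseteq N(A)$ is replaced by the normalization $R(Q^*)=\overline{R(A^*)}$, i.e.\ $N(Q)=N(A)$ after possibly shrinking. The key observation is then that $A=\tilde Q B Q$ forces $A^\dagger B A^\dagger = A^\dagger$ on $\cD(A^\dagger)$: indeed $A A^\dagger = P_{\overline{R(A)}} = \tilde Q$ is \emph{not} quite true (it is the orthogonal projection, not $\tilde Q$), so instead I argue as in Proposition~\ref{inndiamond}. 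Write $A=\tilde Q B Q$; since $R(\tilde Q)=\overline{R(A)}$ we have $A A^\dagger \tilde Q = \tilde Q$, and since $N(Q)=N(A)$ we have $Q A^\dagger A = Q$. Hence $A A^\dagger B A^\dagger A = A A^\dagger (\tilde Q B Q + \text{corrections}) A^\dagger A$; to make this clean I will instead take $A^-:=A^\dagger B A^\dagger$ and check directly that $A A^- A = A$ and $A^- A A^- = A^-$, using $A A^\dagger A = A$, $A^\dagger A A^\dagger=A^\dagger$, together with $R(A)\subseteq R(B)$, $R(A^*)\subseteq R(B^*)$ (which hold because $A\overset{+}{\leq}B$ includes $A\overset{s}{\leq}B$, so $\cD(A^\dagger)$ is respected by the compositions). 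The computation $A^- B A^- = A^\dagger B A^\dagger B A^\dagger B A^\dagger$ must collapse to $A^\dagger B A^\dagger = A^-$; this uses that $A^\dagger B A^\dagger A A^\dagger B A^\dagger = A^\dagger B (A^\dagger A) A^\dagger B A^\dagger$ and the projection identity $A^\dagger B A^\dagger A = A^\dagger B P_{\overline{R(A^*)}}$. Establishing that the stated candidate $A^-$ lies in $A[1,2]$ is the step requiring the compatibility of domains, and this is where the hypothesis $A\overset{s}{\leq}B$ (equivalently $A\overset{+}{\leq}B$) is essential; note however that the statement as written only assumes $A=\tilde Q B Q$, so one should also verify that this algebraic identity together with the normalization $R(\tilde Q)=\overline{R(A)}$, $R(Q^*)=\overline{R(A^*)}$ already gives $R(A)\subseteq R(B)$ and $R(A^*)\subseteq R(B^*)$ — which it does, since $A=\tilde Q B Q$ directly yields $R(A)\subseteq R(\tilde Q B)\subseteq \overline{R(B)}$ needs care, so in fact $R(A)\subseteq R(B)$ follows from $A=\tilde Q(BQ)$ and $R(A)=R(\tilde Q B Q)$; I will spell this out.

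\medskip

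\noindent\textbf{Step $2\Rightarrow 3$.} This is immediate: if $A^-\in A[1,2]$ satisfies $A^- B A^- = A^-$ on $\cD(A^-)$, then $A^- B A^- = A^- \in A[1,2]$ trivially.

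\medskip

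\noindent\textbf{Step $3\Rightarrow 1$.} Suppose $A^-\in A[1,2]$ with $A^-BA^-\in A[1,2]$. Set $Q:=A^- B A^- A$ and $\tilde Q:= A A^- B A^-$. Using $A^- B A^- \in A[1,2]$, i.e.\ $A (A^-BA^-) A = A$ and $(A^-BA^-) A (A^-BA^-) = A^-BA^-$, one checks $Q^2 = A^-BA^- A A^-BA^- A = A^-BA^- A = Q$ (the middle collapse is the $[1,2]$ identity), so $Q$ is a projection, and similarly $\tilde Q^2=\tilde Q$. Then $\tilde Q B Q = A A^-BA^- B A^-BA^- A$; the outer-reduced-inverse relations for $A^-BA^-$ let this telescope to $A A^-BA^- A = A$. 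Hence $A=\tilde Q B Q$ with $\tilde Q, Q$ oblique projections, giving $1$.

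\medskip

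\noindent\textbf{Main obstacle.} The delicate point throughout is the bookkeeping of domains: $A^\dagger$ (and $A^-$) may be unbounded, so every composition $A^- B A^-$, $A^\dagger B A^\dagger$ etc.\ must be checked to be everywhere defined on the relevant domain. This is exactly where the space-preorder hypothesis enters in Step $1\Rightarrow 2$ (to guarantee $B A^\dagger x \in R(A)\oplus N(A^*) = \cD(A^\dagger)$), paralleling the role it plays in Proposition~\ref{inndiamond} and Proposition~\ref{minusinnerinverse}. In Step $3\Rightarrow 1$ the domain issue is milder because $A^-BA^-\in A[1,2]$ already packages the needed regularity, but one still must confirm that $Q$ and $\tilde Q$ are genuine (bounded, everywhere-defined) operators, which follows since $A^-BA^-\in A[1,2]$ forces $R(B A^- A)\subseteq \cD(A^-)$ and the like. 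I expect no further difficulty beyond this careful domain tracking; the algebraic collapses are all instances of the $\{1,2\}$-inverse identities.
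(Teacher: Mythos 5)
Your step $1\Rightarrow 2$ has a genuine gap: the Moore--Penrose inverse is the wrong inner inverse to use here. Since $AA^\dagger=P_{\overline{R(A)}}|_{\cD(A^\dagger)}$ and $A^\dagger A=P_{A^*}$ are \emph{orthogonal} projections, the identity $A^\dagger BA^\dagger=A^\dagger$ is equivalent (by Proposition 4.4 of the paper) to $A=P_ABP_{A^*}$, i.e.\ to $AA^*A=AB^*A$, which is strictly stronger than the hypothesis $A=\tilde{Q}BQ$ with oblique $\tilde{Q},Q$. The paper's own example of partial isometries $F=QG\tilde{Q}$ with $F\neq FG^*F$ shows that under hypothesis 1 one can have $F^\dagger GF^\dagger\neq F^\dagger$, so no amount of domain bookkeeping will rescue the computation you sketch. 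The idea you are missing is to build the inner inverse \emph{adapted to the given oblique projections}: after normalizing $R(\tilde{Q})=\overline{R(A)}$ and $N(Q)=N(A)$, one observes $BQx=Ax+(I-\tilde{Q})BQx\in R(A)+N(\tilde{Q})$ and takes the $A^-\in A[1,2]$ with $\cD(A^-)=R(A)+N(\tilde{Q})$, $AA^-=\tilde{Q}|_{\cD(A^-)}$ and $A^-A=Q$; then $AA^-BA^-A=\tilde{Q}BQ=A$ and $A^-BA^-=A^-(AA^-BA^-A)A^-=A^-AA^-=A^-$. Note also that your claim that $A=\tilde{Q}BQ$ yields $R(A)\subseteq R(B)$ is false (take $B$ an orthogonal projection and $\tilde{Q}$ a suitable oblique projection with $R(\tilde{Q}B)\not\subseteq R(B)$); it is also irrelevant, since the statement deliberately isolates the factorization condition from the space pre-order.

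Your step $3\Rightarrow 1$ also does not close. With $\tilde{Q}:=AA^-BA^-$ and $Q:=A^-BA^-A$ you get $\tilde{Q}BQ=A(A^-BA^-)B(A^-BA^-)A$, and the collapse you invoke would require $(A^-BA^-)B(A^-BA^-)=A^-BA^-$ --- but the hypothesis $A^-BA^-\in A[1,2]$ only gives $(A^-BA^-)A(A^-BA^-)=A^-BA^-$, with $A$ (not $B$) in the middle, so the telescoping is unjustified (and proving $CBC=C$ for $C=A^-BA^-$ is essentially item 2 again). The direct route is much simpler: since $A^-\in A[1,2]$, the operators $\tilde{Q}:=AA^-$ and $Q:=A^-A$ are already projections, and $A=A(A^-BA^-)A=(AA^-)B(A^-A)=\tilde{Q}BQ$. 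Step $2\Rightarrow 3$ is indeed trivial and fine.
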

\begin{proof}
$1\Rightarrow 2$. Let $\tilde{Q}$  and $Q$ be two  projections such that $A = \tilde{Q}BQ.$ Hence, $BQx\in R(A)+N(\tilde{Q})$ for all $x\in\cH.$ Thus, consider  $A^-\in A[1,2]$ with $\cD(A^-)=R(A)+N(\tilde{Q})$ and $AA^-=\tilde{Q}|_{\cD(A^-)}$, $A^-A=Q.$ Since $BQx\in \cD(A^-)$ for all $x\in\cH,$ we have that $AA^-BA^-A=A;$ i.e., $A^-BA^-\in A[1].$ Moreover, $A^-BA^-=A^-AA^-BA^-AA^-=A^-AA^-=A^-$ as desired.

$2\Rightarrow 3$. Trivial.

$3\Rightarrow 1$. Suppose that there exists $A^-\in A[1,2]$ such that $A^- B A^-\in A[1,2]$. Hence, $\cD(A^-)=R(A)+\ese$ for some closed subspace $\ese$ and $AA^-=Q_{R(A)//\ese}$ and $A^-A=Q_{\ete//N(A)}$ for some closed subspace $\ete.$ Therefore,  $A=AA^- B A^-A=Q_{R(A)//\ese}BQ_{\ete//N(A)}=Q_{\overline{R(A)}//\ese}BQ_{\ete//N(A)}$.
\end{proof}

\begin{rem}\label{adiamond}
Given $A,B\in L(\cH,\cK)$ such that $A \overset{+}{\leq} B$, we can change the inner products of $\cH,\cK$ in a convenient way so that $A\overset{\diamond}{\leq}B$ in the new spaces. In fact, if $A=\tilde{Q}BQ$, where $Q$ and $\tilde{Q}$ are projection in $\cH,\cK,$ respectively, the operator $A_Q:=Q^*Q+(I-Q^*)(I-Q)\in L(\cH)^+$ is invertible. Thus, the sesquilinear form $\pint{x,y}_{Q}:=\pint{A_Qx,y}$ defines an inner product equivalent to the inner product $\pint{ \; , \; }$. In the Hilbert space $\cH_{Q}:=(\cH, \pint{\; , \; }_Q),$ the projection $Q$ is orthogonal. Defining $A_{\tilde{Q}}$ in a similar way we get that $A\overset{\diamond}{\leq}B$ in $L(\cH_Q,\cK_{\tilde{Q}}).$ 
\end{rem}

\begin{prop}
Let $A,B\in L(\cH,\cK)$ with closed ranges. If $A \overset{+}{\leq} B$ then there exist $A^-\in A[1,2]$ and $B^-\in B[1,2]$ such that $A^- \overset{-}{\leq} B^-.$

\end{prop}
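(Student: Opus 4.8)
The goal is, starting from $A\overset{+}{\leq}B$ with $A,B$ of closed range, to produce $A^-\in A[1,2]$ and $B^-\in B[1,2]$ with $A^-\overset{-}{\leq}B^-$. Since $A\overset{+}{\leq}B$, by definition there are projections $\tilde Q,Q$ with $A=\tilde Q B Q$, and by Lemma \ref{relation} we have $A\overset{s}{\leq}B$, so $R(A)\subseteq R(B)$ and $R(A^*)\subseteq R(B^*)$. Because $A$ and $B$ have closed range, $A^\dagger$ and $B^\dagger$ are bounded, and also $R(B)=R(B)\oplus N(B^*)=\cK$-compatible, etc. The natural candidates to try first are $B^-:=B^\dagger$ and $A^-:=A^\dagger B A^\dagger$; by Remark \ref{adiamond}-type reasoning these are the ``right'' objects, but one must check (i) that $A^-$ is actually an element of $A[1,2]$, and (ii) that $A^-\overset{-}{\leq}B^-$ via inner inverses, i.e., using Proposition \ref{minusinnerinverse}, that $A^-\overset{s}{\leq}B^-$ and $B^-[1]\cap A^-[1]\neq\varnothing$.

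First I would use the inner-product change from Remark \ref{adiamond}: pass to the Hilbert spaces $\cH_Q$, $\cK_{\tilde Q}$ in which $Q$ and $\tilde Q$ become orthogonal projections, so that $A\overset{\diamond}{\leq}B$ there. In those spaces, apply Corollary \ref{diamondminus}: since $A,B$ have closed range (closedness of range is unaffected by an equivalent renorming), $A\overset{\diamond}{\leq}B$ in $L(\cH_Q,\cK_{\tilde Q})$ is equivalent to $A^{\ddagger}\overset{-}{\leq}B^{\ddagger}$, where $A^{\ddagger},B^{\ddagger}$ denote the Moore--Penrose inverses computed with respect to the new inner products. The key observation is that the Moore--Penrose inverse in a renormed space is an inner inverse in the $\{1,2\}$ sense with respect to the original space as well: $A^{\ddagger}\in A[1,2]$ and $B^{\ddagger}\in B[1,2]$, because the defining relations $AA^{\ddagger}A=A$ and $A^{\ddagger}AA^{\ddagger}=A^{\ddagger}$ do not mention inner products at all (only the selfadjointness conditions do, and those we discard). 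So set $A^-:=A^{\ddagger}$ and $B^-:=B^{\ddagger}$.

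It remains to see that the relation $A^{\ddagger}\overset{-}{\leq}B^{\ddagger}$, which a priori is the minus order in $L(\cK_{\tilde Q},\cH_Q)$, implies the minus order $A^-\overset{-}{\leq}B^-$ in the original $L(\cK,\cH)$. But the minus order is defined purely algebraically: $A^-\overset{-}{\leq}B^-$ means there are \emph{oblique} projections $\tilde R,R$ (idempotents, no adjoint condition) with $A^-=\tilde R B^-=B^- R$. An orthogonal projection in the renormed space is in particular an idempotent, hence an oblique projection in the original space; so the witnesses provided by $A^{\ddagger}\overset{-}{\leq}B^{\ddagger}$ in the renormed setting serve verbatim as witnesses for $A^-\overset{-}{\leq}B^-$ in the original setting. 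This closes the argument.

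**The main obstacle.** The delicate point is the interplay between the two inner-product structures: one must be careful that ``closed range'' and ``idempotent'' are notions preserved under passage to an equivalent inner product, while ``Moore--Penrose inverse'' and ``$\overset{\diamond}{\leq}$'' are not, and that the conclusion $\overset{-}{\leq}$ is again of the preserved (purely algebraic) type. A cleaner, renorming-free alternative — which I would present if the bookkeeping above gets heavy — is to argue directly: take $A^-:=A^\dagger BA^\dagger$; Proposition \ref{inndiamond}-style computations (using $R(A)\subseteq R(B)$, $R(A^*)\subseteq R(B^*)$) show $A^-\in A[1,2]$ and that $B^\dagger$ is an inner inverse of $A^-$; then $A^-\overset{s}{\leq}B^\dagger$ follows from $R(A)\subseteq R(B)$, $R(A^*)\subseteq R(B^*)$ (which give $R((A^-)^*)\subseteq R((B^\dagger)^*)=R(B)$ and $R(A^-)\subseteq R(B^\dagger)=R(B^*)$), and Proposition \ref{minusinnerinverse} yields $A^-\overset{-}{\leq}B^\dagger$, so $B^-:=B^\dagger$ works. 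I expect verifying $A^\dagger BA^\dagger\in A[1,2]$ and the range inclusions $A^-\overset{s}{\leq}B^-$ to be the only computational steps requiring genuine care; everything else is a direct appeal to the cited results.
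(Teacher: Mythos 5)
Your main argument is correct and is essentially the paper's own proof: pass to the renormed spaces of Remark \ref{adiamond} where $\tilde{Q},Q$ become orthogonal, invoke Corollary \ref{diamondminus} there, and observe that the $\{1,2\}$-inverse conditions and the minus order are purely algebraic and hence survive the return to the original inner products (the paper leaves this last observation implicit; you make it explicit, which is a small improvement). One caution: the ``renorming-free alternative'' you sketch at the end does not work, because $A^\dagger B A^\dagger\in A[1,2]$ is equivalent (Proposition \ref{inndiamond}) to $AA^*A=AB^*A$, i.e.\ to the diamond condition, which the plus order does not imply --- the paper's example of partial isometries with $F\overset{+}{\leq}G$ but $F\neq FG^*F$ shows that $A^\dagger BA^\dagger$ need not even be an inner inverse of $A$ under the hypothesis $A\overset{+}{\leq}B$. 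Stick with the renorming argument.
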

\begin{proof}
Suppose that $A \overset{+}{\leq} B$ and let $Q,\tilde{Q}$ be two projections such that $A=\tilde{Q}BQ.$ Then, by Remark \ref{adiamond}, $A,B\in L(\cH_Q, \cK_{\tilde{Q}})$ satisfy $A\overset{\diamond}{\leq}B$ and so, by Corollary \ref{diamondminus}, the Moore-Penrose inverses of $A$ and $B$ in  $L(\cK_{\tilde{Q}}, \cH_Q)$, denoted here by $A^-$ and $B^-$, respectively;  verify $A^- \overset{-}{\leq} B^-$. 
\end{proof}


\section{Operator orders, ranges and operator equations}\label{rangos}

\subsection{Operator orders and ranges}

Recall that given $A,B\in L(\cH, \cK)$, $A\overset{s}{\leq}B$ if and only if $R(A)\subseteq R(B)$ and $R(A^*)\subseteq R(B^*).$ This condition is in turn equivalent to 
$$R(B)=R(A)+R(B-A) \ \text{and} \ R(B^*)=R(A^*)+R(B^*-A^*).$$
It is easy to check that if $A\overset{*}{\leq}B$ then $A\overset{s}{\leq}B$ and the same holds replacing $\overset{*}{\leq}$ by the rest of the orders we study. In what follows, we gather several characterization of this additivity.

\begin{prop}\label{order*} Let $A,B\in L(\cH, \cK)$. The following conditions are equivalent:
\begin{enumerate}
\item  $A$ $^*\hspace{-5pt}\leq B;$ 
\item $R(B)=R(A)\oplus R(B-A)$.
\end{enumerate}
\end{prop}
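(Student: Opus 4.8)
The plan is to prove the equivalence by unwinding the definition of the left star order via the matrix representation given in Proposition \ref{left*}, and then analyzing when the sum $R(A)+R(B-A)$ is direct. Recall that, with respect to $\cH=\overline{R(A^*)}\oplus N(A)$ and $\cK=\overline{R(A)}\oplus N(A^*)$, we have $A=\left(\begin{smallmatrix} a & 0\\ 0 & 0\end{smallmatrix}\right)$ with $a$ injective and with dense range on $\overline{R(A)}$; in particular $R(A)=R(a)\subseteq \overline{R(A)}$ and the first-coordinate space is exactly $\overline{R(A)}$.

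For the implication $1\Rightarrow 2$: if $A$ $^*\hspace{-5pt}\leq B$, then by Proposition \ref{left*} we may write $B=\left(\begin{smallmatrix} a & 0\\ b_{22}x & b_{22}\end{smallmatrix}\right)$, hence $B-A=\left(\begin{smallmatrix} 0 & 0\\ b_{22}x & b_{22}\end{smallmatrix}\right)$. Therefore $R(B-A)\subseteq N(A^*)$ (the second summand of $\cK$) and $R(A)\subseteq \overline{R(A)}$ (the first summand), so the two ranges sit in complementary closed subspaces and the sum is automatically direct. It remains to check $R(B)=R(A)+R(B-A)$, which is the ``$\overset{s}{\leq}$'' part: this already follows from $A$ $^*\hspace{-5pt}\leq B$ since the left star order implies $R(A)\subseteq R(B)$, and $R(B-A)\subseteq R(B)$ because $R(B)=R(A)\overset{.}{+}R(B-A)$... more carefully, from the matrix form any $Bx$ equals $Ax + (B-A)x$, giving $R(B)\subseteq R(A)+R(B-A)$, and the reverse inclusion is clear since $R(A)\subseteq R(B)$ and $R(B-A)\subseteq R(B)$. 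So $R(B)=R(A)\oplus R(B-A)$.

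For the implication $2\Rightarrow 1$: suppose $R(B)=R(A)\oplus R(B-A)$. In particular $R(A)\subseteq R(B)$, which is one of the two defining conditions of the left star order; it remains to show $A^*A=A^*B$, equivalently $A^*(B-A)=0$, equivalently $R(B-A)\subseteq N(A^*)=R(A)^{\perp}$. This is where the directness of the sum must be exploited, and I expect this to be the main obstacle: directness of $R(A)\overset{.}{+}R(B-A)$ as algebraic subspaces only gives $R(A)\cap R(B-A)=\{0\}$, which is weaker than orthogonality. The key point is that one also knows $R(A)^{\perp}\cap R(B)$ decomposes, or rather: take any $z\in R(B-A)$, write $z=z_1+z_2$ with $z_1\in\overline{R(A)}$, $z_2\in N(A^*)$; one must show $z_1=0$. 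The standard trick is that since $A=P_A B P_{A^*}$... no — instead, use that $z=(B-A)x$ for some $x$, and $P_A z = P_A(B-A)x = P_A B x - A x$ (using $P_A A = A$), so $P_A z\in R(P_A B)\subseteq \overline{R(A)}$; pushing this further, $P_A z + (Ax - Ax)$... Let me restate: we get $P_A(B-A)x = P_A Bx - Ax \in \overline{R(A)}$, and since $A x\in R(A)$ and $P_A B x = A x + P_A(B-A)x$, the element $w:=P_A(B-A)x$ satisfies $Bx = Ax + (B-A)x$, so $P_A Bx = Ax + w$ while $(I-P_A)Bx = (I-P_A)(B-A)x$. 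Now $Bx\in R(B) = R(A)\oplus R(B-A)$: write $Bx = Ax' + (B-A)x''$. Comparing the $P_A$-components and using the direct sum decomposition of $R(B)$ together with $R(A)\subseteq\overline{R(A)}$, one forces $w\in R(A)\cap R(B-A)=\{0\}$, hence $(B-A)x\in N(A^*)$. Since $x$ was arbitrary, $R(B-A)\subseteq N(A^*)$, i.e., $A^*B=A^*A$, and so $A$ $^*\hspace{-5pt}\leq B$.

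\begin{proof}
Suppose first that $A$ $^*\hspace{-5pt}\leq B$. By Proposition \ref{left*}, $B=\left(\begin{matrix} a & 0 \\ b_{22}x & b_{22}\end{matrix}\right)$ for some $x\in L(\overline{R(A^*)}, N(A))$, so that $B-A=\left(\begin{matrix} 0 & 0 \\ b_{22}x & b_{22}\end{matrix}\right)$. Hence $R(B-A)\subseteq N(A^*)$ while $R(A)\subseteq \overline{R(A)}$, and since $\overline{R(A)}$ and $N(A^*)$ are in direct (indeed orthogonal) sum in $\cK$, the sum $R(A)+R(B-A)$ is direct. Moreover, for every $x\in\cH$, $Bx=Ax+(B-A)x$, so $R(B)\subseteq R(A)+R(B-A)$; the reverse inclusion holds because $R(A)\subseteq R(B)$ (as $A$ $^*\hspace{-5pt}\leq B$) and $R(B-A)\subseteq R(B)$. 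Therefore $R(B)=R(A)\oplus R(B-A)$.

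Conversely, assume $R(B)=R(A)\oplus R(B-A)$. In particular $R(A)\subseteq R(B)$, which is one of the defining conditions of the left star order. It remains to prove that $A^*A=A^*B$, i.e., $R(B-A)\subseteq N(A^*)$. Let $x\in\cH$ and put $v=(B-A)x\in R(B-A)$ and $u=Ax\in R(A)$, so that $Bx=u+v\in R(B)$. Write $w:=P_A v\in\overline{R(A)}$. Since $P_A u=u$, we get $P_A(Bx)=u+w$ and $(I-P_A)(Bx)=(I-P_A)v$. Now $u+w\in\overline{R(A)}$ and $(I-P_A)v\in N(A^*)$, so these are the components of $Bx$ in the decomposition $\cK=\overline{R(A)}\oplus N(A^*)$. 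On the other hand, since $Bx\in R(B)=R(A)\oplus R(B-A)$, there exist $u'\in R(A)$ and $v'\in R(B-A)$ with $Bx=u'+v'$. From the first paragraph applied to the identity $B-A$ has matrix $\left(\begin{matrix} 0 & 0 \\ b_{22}x' & b_{22}'\end{matrix}\right)$... rather, note directly: the element $(u+w)-u'=v'-(I-P_A)v$ lies in $\overline{R(A)}$, hence $v'-(I-P_A)v\in\overline{R(A)}$; but $v'\in R(B-A)$ and $(I-P_A)v\in N(A^*)$. Projecting with $P_A$ gives $P_A v'=(u+w)-u'\in R(A)$ modulo $\overline{R(A)}$; combined with $v'\in R(B-A)$ and the directness $R(A)\cap R(B-A)=\{0\}$ forces $P_A v'=0$ and $u'=u+w$, whence $w=u'-u\in R(A)\cap R(B-A)=\{0\}$. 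Thus $w=P_A v=0$, i.e., $v=(B-A)x\in N(A^*)$. Since $x$ was arbitrary, $R(B-A)\subseteq N(A^*)$, that is $A^*B=A^*A$. Together with $R(A)\subseteq R(B)$ this yields $A$ $^*\hspace{-5pt}\leq B$.
\end{proof}
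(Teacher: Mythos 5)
Your forward implication is fine and follows the paper's route (the matrix form from Proposition \ref{left*} makes everything visible at once). The problem is the converse. In this paper $\oplus$ is reserved for \emph{orthogonal} sums (see the Preliminaries: ``$\ese\oplus\ete$ means that the sum is orthogonal''), so condition 2 already asserts $R(B-A)\perp R(A)$, i.e.\ $R(B-A)\subseteq N(A^*)$; the converse is then immediate, exactly as the paper says, since $A^*(B-A)=0$ gives $A^*A=A^*B$ and $R(A)\subseteq R(B)$ is contained in the hypothesis. You instead read $\oplus$ as a merely algebraic direct sum and set out to upgrade $R(A)\cap R(B-A)=\{0\}$ to orthogonality. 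That cannot be done, because the statement you are trying to prove is false: take $A=\left(\begin{smallmatrix}1&0\\0&0\end{smallmatrix}\right)$ and $B=\left(\begin{smallmatrix}1&1\\0&1\end{smallmatrix}\right)$ on $\C^2$. Then $R(B)=\C^2=R(A)\overset{.}{+}R(B-A)$ is a direct (non-orthogonal) decomposition, yet $A^*B=\left(\begin{smallmatrix}1&1\\0&0\end{smallmatrix}\right)\neq A^*A$, so the left star order fails. (Algebraic range additivity in both $R(B)$ and $R(B^*)$ characterizes the \emph{minus} order, Proposition \ref{ordenminus}, not the star order.)

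Concretely, the step that breaks is ``combined with $v'\in R(B-A)$ and the directness $R(A)\cap R(B-A)=\{0\}$ forces $P_Av'=0$.'' By uniqueness of the decomposition of $Bx$ in the direct sum you have $u'=u$ and $v'=v$, so your chain of identities only re-derives $w=w$; and $P_Av'$ lies in $\overline{R(A)}$ but there is no reason for it to lie in $R(B-A)$, so the intersection argument does not apply. In the counterexample above, with $x=e_2$ one gets $w=P_A(B-A)e_2=(1,0)\neq 0$. Once you read $\oplus$ as orthogonal, your whole second paragraph collapses to one line.
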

\begin{proof}
 If $A$ $^*\hspace{-5pt}\leq B$ then, from Proposition \ref{left*}, there exists $x\in L(\overline{R(A^*)}, N(A))$ such that $B=\left(\begin{matrix} a & 0 \\ b_{22}x & b_{22}\end{matrix}\right)$. Thus, the result follows immediately. The converse is also trivial because $R(B-A)\subseteq N(P_A),$ and so $P_A B=A.$

\end{proof}

 As a corollary we retrieve the following from \cite{MR3682701} and \cite{MR2742334}:

\begin{cor}\label{I} Let $A,B\in L(\cH, \cK)$. The following conditions are equivalent: 
\begin{enumerate}
\item $A\overset{*}{\leq} B;$ 
\item $R(B)=R(A)\oplus R(B-A)$ and $R(B^*)=R(A^*)\oplus R(B^*-A^*);$
\item $R(B-A)=R(B)\ominus R(A)$ and  $R((B-A)^\dagger)=R(B^\dagger)\ominus R(A^\dagger).$ 
\end{enumerate}
\end{cor}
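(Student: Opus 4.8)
The plan is to prove the equivalence of the three conditions in Corollary~\ref{I} by combining Proposition~\ref{order*} with its analogue for the right star order and with Corollary~\ref{order**}. The implication $1\Rightarrow 2$ is immediate: $A\overset{*}{\leq}B$ means $A$ $^*\hspace{-5pt}\leq B$ and $A\leq^* B$, so Proposition~\ref{order*} (applied to $A,B$ and, via adjoints, to $A^*,B^*$, using that $A\leq^* B$ iff $A^*$ $^*\hspace{-5pt}\leq B^*$) yields both direct sum decompositions. For $2\Rightarrow 1$, the two direct sum conditions are precisely the characterizations of $A$ $^*\hspace{-5pt}\leq B$ and $A\leq^* B$ given by Proposition~\ref{order*} and its adjoint form, and the conjunction of these two is $A\overset{*}{\leq}B$. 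So the real content is the equivalence with item~3.

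For $1\Rightarrow 3$, I would start from the block form $B=\left(\begin{matrix} a & 0 \\ 0 & b_{22}\end{matrix}\right)$ supplied by Corollary~\ref{order**}, with respect to $\cH=\overline{R(A^*)}\oplus N(A)$ and $\cK=\overline{R(A)}\oplus N(A^*)$. Then $B-A=\left(\begin{matrix} 0 & 0 \\ 0 & b_{22}\end{matrix}\right)$, so $R(B-A)=R(b_{22})\subseteq N(A^*)=R(A)^\bot$, whence $R(B-A)=R(B)\ominus R(A)$ is just the orthogonality of the two summands of $R(B)$. For the second equality, I would compute the Moore--Penrose inverses from the block forms: $B^\dagger=\left(\begin{matrix} a^\dagger & 0 \\ 0 & b_{22}^\dagger\end{matrix}\right)$, $A^\dagger=\left(\begin{matrix} a^\dagger & 0 \\ 0 & 0\end{matrix}\right)$ and $(B-A)^\dagger=\left(\begin{matrix} 0 & 0 \\ 0 & b_{22}^\dagger\end{matrix}\right)$ (these hold at least as possibly-unbounded operators, and the range identities only involve closures of ranges, so boundedness is not an issue). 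Then $R(A^\dagger)=\overline{R(a^*)}\subseteq\overline{R(A^*)}$ and $R((B-A)^\dagger)=\overline{R(b_{22}^*)}\subseteq N(A)$ are orthogonal, and their span is $R(B^\dagger)$; this gives $R((B-A)^\dagger)=R(B^\dagger)\ominus R(A^\dagger)$.

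For $3\Rightarrow 1$, note that $R(B-A)=R(B)\ominus R(A)$ already forces $R(A)\subseteq R(B)$ and $R(B)=R(A)\oplus R(B-A)$ (orthogonal direct sum), which by Proposition~\ref{order*} gives $A$ $^*\hspace{-5pt}\leq B$. Similarly, using that $R(T^\dagger)$ has the same closure as $R(T^*)$ and that $R(B^\dagger)\ominus R(A^\dagger)$ being an orthogonal complement inside $R(B^\dagger)$ translates (after passing to closures) into $\overline{R(B^*)}=\overline{R(A^*)}\oplus\overline{R((B-A)^*)}$ with $R(A^*)\subseteq R(B^*)$; applying Proposition~\ref{order*} to $A^*,B^*$ yields $A\leq^* B$. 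Together, $A\overset{*}{\leq}B$. The main technical care needed is the interplay between ranges and closures of ranges and the exact meaning of $\ominus$ when the ranges involved are not closed; I would handle this by observing that $R(B)=R(A)\overset{.}{+}R(B-A)$ always holds under $\overset{s}{\leq}$, so ``$=R(B)\ominus R(A)$'' is equivalent to the summands being orthogonal, and orthogonality of $R(A),R(B-A)$ (resp. of $R(A^\dagger),R((B-A)^\dagger)$) is exactly what the block diagonalization in Corollary~\ref{order**} encodes. This is the only step with any subtlety; the rest is bookkeeping with $2\times2$ block matrices.
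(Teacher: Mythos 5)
Your handling of $1\Leftrightarrow 2$ coincides with the paper's (Proposition \ref{order*} and its right-star analogue), and your block-matrix derivation of $1\Rightarrow 3$ is sound, if more computational than the paper's one-line appeal to $R(T^\dagger)=\overline{R(T^*)}$. The genuine gap is in $3\Rightarrow 1$, in the adjoint half: you assert that $R((B-A)^\dagger)=R(B^\dagger)\ominus R(A^\dagger)$ translates, after passing to closures, into $\overline{R(B^*)}=\overline{R(A^*)}\oplus\overline{R((B-A)^*)}$ \emph{together with} $R(A^*)\subseteq R(B^*)$, and you then feed that actual-range inclusion into Proposition \ref{order*}. But $R(T^\dagger)=\overline{R(T^*)}$ is already a closed subspace, so condition 3 only carries information about closures of the adjoint ranges; the inclusion $R(A^*)\subseteq R(B^*)$ of the ranges themselves does not follow from $\overline{R(A^*)}\subseteq\overline{R(B^*)}$, and nothing in your argument supplies it. (Contrast the first half, where $R(B-A)=R(B)\ominus R(A)$ genuinely involves unclosured ranges and your deduction $R(A)\subseteq R(B)$ is correct.) Your proposed remedy -- that ``$R(B)=R(A)\overset{.}{+}R(B-A)$ always holds under $\overset{s}{\leq}$'' -- is also false as stated: the space pre-order only gives $R(B)=R(A)+R(B-A)$ as a sum, not a direct sum (take $A=\frac{1}{2}I$, $B=I$).

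There are two ways to close this. (i) Invoke Proposition \ref{ordenminus}: condition 3 does yield both closure decompositions $\overline{R(B)}=\overline{R(A)}\oplus\overline{R(B-A)}$ and $\overline{R(B^*)}=\overline{R(A^*)}\oplus\overline{R(B^*-A^*)}$, and that proposition upgrades the pair of closure decompositions to the pair of actual-range decompositions; this is essentially what the paper's own terse claim that $R(B^*)=R(A^*)\oplus R(B^*-A^*)$ is equivalent to its closured version is leaning on. (ii) Cleaner, and closest to your own closing remark: the two orthogonality relations $R(B-A)\perp R(A)$ and $R(B^*-A^*)\perp R(A^*)$ -- the latter because $R(B^*-A^*)\subseteq\overline{R(B^*-A^*)}=R((B-A)^\dagger)\subseteq R(A^\dagger)^\perp=N(A)$ -- say exactly that $P_A(B-A)=0$ and $(B-A)P_{A^*}=0$, i.e.\ $A=P_AB=BP_{A^*}$, which is $A\overset{*}{\leq}B$ by item 1 of the lemma in the Preliminaries; no range inclusions are needed at all. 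You gesture at precisely this when you say that orthogonality is what the block diagonalization of Corollary \ref{order**} encodes, but the argument you actually wrote routes through the unproven inclusion instead.
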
	
\begin{proof}
The equivalence $1\Leftrightarrow 2$ follows from Proposition \ref{order*}.  On the other hand, $R(B^*)=R(A^*)\oplus R(B^*-A^*)$ is equivalent to $\overline{R(B^*)}=\overline{R(A^*)}\oplus \overline{ R(B^*-A^*)}$ or equivalently, $R(B^\dagger)=R(A^\dagger)\oplus R((B-A)^\dagger)$. Therefore $2\Leftrightarrow 3$ holds.
\end{proof}

A similar characterization can be given for the minus order. See \cite{MR3682701}.
 
\begin{prop}\label{ordenminus}
Let $A,B\in L(\cH,\cK)$ the following conditions are equivalent:
\begin{enumerate}
\item $A\overset{-}{\leq}B;$
\item $R(B)=R(A)\overset{.}{+}R(B-A)$ and $R(B^*)=R(A^*)\overset{.}{+}R(B^*-A^*);$
\item $\overline{R(B)}=\overline{R(A)}\overset{.}{+}\overline{R(B-A)}$ and $\overline{R(B^*)}=\overline{R(A^*)}\overset{.}{+}\overline{R(B^*-A^*)}.$
\end{enumerate}
\end{prop}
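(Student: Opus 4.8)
The plan is to prove the cycle $1 \Rightarrow 2 \Rightarrow 3 \Rightarrow 1$, exploiting the matrix description of the minus order from Proposition \ref{minusmatrix} together with Theorem \ref{minuscomplementable} where convenient. For $1 \Rightarrow 2$, I would start from $A\overset{-}{\leq}B$ and use Proposition \ref{minusmatrix} to write
$$B=\left(\begin{matrix} a+yb_{22}x & yb_{22} \\ b_{22}x & b_{22}\end{matrix}\right)$$
with respect to $\cH=\overline{R(A^*)}\oplus N(A)$ and $\cK=\overline{R(A)}\oplus N(A^*)$. Then $B-A=\left(\begin{matrix} yb_{22}x & yb_{22} \\ b_{22}x & b_{22}\end{matrix}\right)=\left(\begin{matrix} y \\ 1\end{matrix}\right)b_{22}\left(\begin{matrix} x & 1\end{matrix}\right)$, so $R(B-A)=\left(\begin{matrix} y \\ 1\end{matrix}\right)R(b_{22})$. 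Since any vector in $R(B-A)$ has second coordinate in $R(b_{22})\subseteq N(A^*)$ and $R(A)=R(a)\oplus 0 \subseteq \overline{R(A)}\oplus 0$, a vector lies in $R(A)\cap R(B-A)$ only if its first coordinate (which for an element of $R(B-A)$ equals $y$ applied to the second coordinate) forces the second coordinate to $0$; hence $R(A)\cap R(B-A)=\{0\}$, i.e. the sum is direct. That $R(B)=R(A)\overset{.}{+}R(B-A)$ then follows since $A\overset{s}{\leq}B$ already gives $R(B)=R(A)+R(B-A)$. The statement for $B^*$ is symmetric, using $A^*\overset{-}{\leq}B^*$ (which is immediate from the definition, transposing the projections).

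For $2 \Rightarrow 3$, I would simply observe that a direct sum $R(B)=R(A)\overset{.}{+}R(B-A)$ of (not necessarily closed) subspaces does not automatically pass to closures, so this is the step requiring a small argument. The key point is that the sum being \emph{direct} together with $R(A)+R(B-A)=R(B)$ can be leveraged as follows: if $\overline{R(A)}\cap\overline{R(B-A)}$ contained a nonzero vector, one would need to contradict directness — but this needs care. Here I would instead route through operators: from $2$, there is an idempotent-type decomposition. More cleanly, I expect the cleanest path is to prove $3 \Rightarrow 1$ and $1 \Rightarrow 3$ directly and deduce $2$ as an intermediate. For $1 \Rightarrow 3$: with the matrix form above, $\overline{R(B-A)}=\overline{\left(\begin{matrix} y \\ 1\end{matrix}\right)R(b_{22})}$, and since the map $\left(\begin{matrix} y \\ 1\end{matrix}\right)$ is bounded below (its second-coordinate projection is the identity on its domain), it has closed range when restricted appropriately, giving $\overline{R(B-A)}=\left(\begin{matrix} y \\ 1\end{matrix}\right)\overline{R(b_{22})}$; the same boundedness-below shows $\overline{R(A)}\cap\overline{R(B-A)}=\{0\}$, and $\overline{R(B)}=\overline{R(A)}+\overline{R(B-A)}$ because $\overline{R(A)}$ is a direct complement giving a bounded projection. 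The $B^*$ part is symmetric.

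For $3 \Rightarrow 1$: assuming $\overline{R(B)}=\overline{R(A)}\overset{.}{+}\overline{R(B-A)}$, the projection $\tilde{Q}_0$ onto $\overline{R(A)}$ along $\overline{R(B-A)}$ is bounded; extend it by $0$ on $N(B^*)\supseteq$ a complement to get a projection $\tilde{Q}\in L(\cK)$ with $R(\tilde{Q})=\overline{R(A)}$. Then $\tilde{Q}B = \tilde{Q}A + \tilde{Q}(B-A) = A + 0 = A$, using $R(A)\subseteq\overline{R(A)}$ so $\tilde{Q}A=A$, and $R(B-A)\subseteq\overline{R(B-A)}=N(\tilde{Q}_0)$ so $\tilde{Q}(B-A)=0$. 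Symmetrically, from the second hypothesis one builds a projection $Q\in L(\cH)$ with $R(Q^*)=\overline{R(A^*)}$ and $BQ=A$ (working with adjoints and transposing). Hence $A=\tilde{Q}B=BQ$, i.e. $A\overset{-}{\leq}B$. The main obstacle I anticipate is precisely the passage between the algebraic direct-sum condition $2$ and the topological one $3$: one must verify that the operator $\left(\begin{matrix} y \\ 1\end{matrix}\right)$ (equivalently, the graph-type embedding) is bounded below so that closures behave well and the relevant projections stay bounded; once that boundedness-below observation is in place, all three implications are short.
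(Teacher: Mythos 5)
The paper gives no proof of this proposition at all --- it simply defers to the reference \cite{MR3682701} --- so your argument has to stand entirely on its own, and most of it does. The implication $1\Rightarrow 2$ via the factorization $B-A=\left(\begin{smallmatrix} y \\ 1\end{smallmatrix}\right)b_{22}\left(\begin{smallmatrix} x & 1\end{smallmatrix}\right)$ is correct; so is $1\Rightarrow 3$ (the key observation that $w\mapsto (yw,w)$ is bounded below, so that $\overline{R(B-A)}=\{(yw,w): w\in\overline{R(b_{22})}\}$ is closed and meets $\overline{R(A)}=\overline{R(A)}\oplus 0$ trivially, is exactly the right point); and so is $3\Rightarrow 1$ (the idempotent on the closed subspace $\overline{R(B)}$ determined by the direct sum of two closed subspaces is bounded by the closed graph theorem, and extending it by $0$ on $N(B^*)$ gives $\tilde{Q}B=A$, with the adjoint argument producing $Q$).

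The genuine gap is that no implication \emph{out of} item 2 is ever established. You explicitly abandon $2\Rightarrow 3$, and your fallback plan --- prove $1\Leftrightarrow 3$ and ``deduce $2$ as an intermediate'' --- only yields $1\Rightarrow 2$ and $3\Rightarrow 2$; it never shows that $2$ implies $1$ or $3$, so the three conditions are not proved equivalent. The missing step is real but fixable, and the right route is algebraic rather than topological, which confirms your instinct that passing from the algebraic direct sum to closures ``needs care.'' Concretely: since $R(A)\subseteq R(B)$, Douglas' lemma gives the reduced solution $C\in L(\cH)$ of $A=BC$ with $R(C)\subseteq \overline{R(B^*)}$. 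For each $x$, the vector $BCx=Ax$ admits the two decompositions $Ax+0$ and $ACx+(B-A)Cx$ inside $R(A)\overset{.}{+}R(B-A)$, so uniqueness forces $AC=A$ and $(B-A)C=0$; hence $BC^2=AC=A=BC$, so $R(C^2-C)\subseteq N(B)\cap \overline{R(B^*)}=\{0\}$ and $C$ is a bounded projection with $BC=A$. The hypothesis on the adjoints produces in the same way a projection $\tilde{Q}$ with $\tilde{Q}B=A$, giving $2\Rightarrow 1$ and closing the cycle. With that paragraph added, your proof is complete and self-contained, which is more than the paper itself provides.
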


As a corollary  we get the next result concerning the diamond order:

\begin{cor}
Let $A,B\in L(\cH, \cK)$ with closed ranges. The following conditions are equivalent:
\begin{enumerate}
\item $A\overset{\diamond}{\leq}B$;
\item $R(B)=R(A)\overset{.}{+}R((B^\dagger-A^\dagger)^*)$ and $R(B^*)=R(A^*)\overset{.}{+}R(B^\dagger-A^\dagger).$
\end{enumerate}
\end{cor}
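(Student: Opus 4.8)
The plan is to derive this corollary from Proposition \ref{ordenminus} (the range characterization of the minus order) together with Corollary \ref{diamondminus} (which says that for operators with closed ranges, $A\overset{\diamond}{\leq}B$ is equivalent to $A^\dagger\overset{-}{\leq}B^\dagger$). The key observation is that when $A$ and $B$ have closed ranges, so do $A^\dagger$ and $B^\dagger$, and moreover $R(A^\dagger)=R(A^*)$ and $R((A^\dagger)^*)=R(A)$ (similarly for $B$), so all the closures appearing in Proposition \ref{ordenminus} can be dropped.

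First I would note that $A\overset{\diamond}{\leq}B$ iff $A^\dagger\overset{-}{\leq}B^\dagger$ by Corollary \ref{diamondminus}. Then I would apply Proposition \ref{ordenminus} to the pair $A^\dagger, B^\dagger$ in place of $A,B$: this gives that $A^\dagger\overset{-}{\leq}B^\dagger$ is equivalent to
$$R(B^\dagger)=R(A^\dagger)\overset{.}{+}R(B^\dagger-A^\dagger) \quad\text{and}\quad R((B^\dagger)^*)=R((A^\dagger)^*)\overset{.}{+}R((B^\dagger-A^\dagger)^*).$$
Now I would rewrite each range using the standard identities for the Moore--Penrose inverse of a closed-range operator: $R(A^\dagger)=R(A^*)$, $R(B^\dagger)=R(B^*)$, $R((A^\dagger)^*)=\overline{R(A)}=R(A)$, $R((B^\dagger)^*)=R(B)$. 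The first displayed equation becomes $R(B^*)=R(A^*)\overset{.}{+}R(B^\dagger-A^\dagger)$ and the second becomes $R(B)=R(A)\overset{.}{+}R((B^\dagger-A^\dagger)^*)$, which is exactly condition 2 of the corollary. This chain of equivalences is reversible, so the two conditions are equivalent.

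The only point that needs a small amount of care — and the main (mild) obstacle — is justifying that $R(B^\dagger-A^\dagger)$ and $R((B^\dagger-A^\dagger)^*)$ are already closed, or rather that no closures are needed there; but this is automatic since Proposition \ref{ordenminus}(2) is stated with $\overset{.}{+}$ of the (non-closed) ranges, and the equivalence $2\Leftrightarrow 3$ there shows the closed and non-closed versions agree when the relation holds. One should also check that $B^\dagger-A^\dagger$ is a bounded operator so that Proposition \ref{ordenminus} applies — this follows because $A,B$ have closed ranges, hence $A^\dagger,B^\dagger\in L(\cK,\cH)$. With these remarks in place the proof is a direct substitution; no genuinely new computation is required.
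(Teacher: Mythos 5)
Your proposal is correct and is precisely the paper's own argument: the authors prove this corollary by citing Proposition \ref{ordenminus} applied to the pair $A^\dagger, B^\dagger$ together with Corollary \ref{diamondminus}, exactly as you do. Your additional remarks about $R(A^\dagger)=R(A^*)$, $R((A^\dagger)^*)=R(A)$ and the boundedness of $B^\dagger - A^\dagger$ for closed-range operators just make explicit the translation step the paper leaves implicit.
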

\begin{proof}
Apply Proposition \ref{ordenminus} and Corollary \ref{diamondminus}.
\end{proof}


\subsection{Operator orders and operator's equations}

Here, we  describe the operator orders by means of solutions of operator equations.

\begin{prop}
	Let $A,B\in L(\cH, \cK)$. The following conditions are equivalent:
		\begin{enumerate}
		\item $A$ $^*\hspace{-5pt}\leq B$;
		\item $B=A+C$ where $C\in L(\cH,\cK)$ is such that $R(C)\subseteq R(B)$ and $A^*C=0.$ 
		\end{enumerate}
\end{prop}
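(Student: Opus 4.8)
The plan is to prove the two implications separately, relying on the matrix characterization of the left star order in Proposition \ref{left*}.

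\textbf{From (1) to (2).} Suppose $A$ $^*\hspace{-5pt}\leq B$. Set $C:=B-A$. Then $R(C)=R(B-A)$, and since $A$ $^*\hspace{-5pt}\leq B$ gives $R(A)\subseteq R(B)$ together with $A=P_AB$, we get $R(C)=R(B-A)\subseteq R(B)$ directly (indeed $C=B-A=(I-P_A)B$, so $R(C)\subseteq R(B)$). For the second condition, $A^*C=A^*B-A^*A=0$ by the defining equality $A^*A=A^*B$ of the left star order. This direction is essentially immediate.

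\textbf{From (2) to (1).} Suppose $B=A+C$ with $R(C)\subseteq R(B)$ and $A^*C=0$. The condition $A^*C=0$ says $R(C)\subseteq N(A^*)=R(A)^\perp$, hence $P_AC=0$, which gives $A^*B=A^*A+A^*C=A^*A$ — this is the algebraic half of the left star order. It remains to check $R(A)\subseteq R(B)$. From $P_AC=0$ we have $P_AB=P_AA+P_AC=A$, so $A=P_AB$; in particular $R(A)=R(P_AB)\subseteq R(B)$. Thus both defining conditions of $A$ $^*\hspace{-5pt}\leq B$ hold, i.e. $A^*A=A^*B$ and $R(A)\subseteq R(B)$.

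The argument is short and there is no real obstacle; the only point requiring a moment's care is the bookkeeping around $A^*C=0 \iff R(C)\subseteq N(A^*) \iff P_AC=0$, and the observation that $A=P_AB$ both delivers the range inclusion $R(A)\subseteq R(B)$ and is equivalent, via Lemma~1(1) restricted to the left factor (cf. Proposition \ref{left*}), to the left star order itself. One could alternatively phrase the converse using the matrix form of Proposition \ref{left*}: $P_AC=0$ forces $C$ to have the block shape $\left(\begin{matrix} 0 & 0 \\ c_{21} & c_{22}\end{matrix}\right)$, and $R(C)\subseteq R(B)$ then pins down $c_{21}=c_{22}x$, giving $B=\left(\begin{matrix} a & 0 \\ c_{22}x & c_{22}\end{matrix}\right)$, which is exactly condition (2) of Proposition \ref{left*}. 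Either route closes the proof.
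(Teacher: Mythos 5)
Your forward direction is essentially the paper's (take $C=B-A$; the equality $A^*C=0$ is immediate and $R(C)=R(B-A)\subseteq R(B)$ follows from $R(A)\subseteq R(B)$ because $R(B)$ is a subspace). But both your parenthetical there and, more seriously, the key step of your converse rest on a false inference: from $A=P_AB$ you conclude $R(A)=R(P_AB)\subseteq R(B)$. Left multiplication by a projection does not preserve containment in $R(B)$; it only gives $R(P_AB)\subseteq R(P_A)$. Concretely, take $A=\left(\begin{matrix}1&0\\0&0\end{matrix}\right)$ and $C=\left(\begin{matrix}0&0\\1&0\end{matrix}\right)$, so that $B=A+C=\left(\begin{matrix}1&0\\1&0\end{matrix}\right)$. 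Then $A^*C=0$ and $A=P_AB$, yet $R(A)\not\subseteq R(B)$. This shows two things at once: the inference ``$A=P_AB$ hence $R(A)\subseteq R(B)$'' is invalid, and the hypothesis $R(C)\subseteq R(B)$ --- which your primary converse argument never actually invokes --- is indispensable, since without it the implication $2\Rightarrow 1$ is false. (The same criticism applies to the forward parenthetical: $C=(I-P_A)B$ only yields $R(C)\subseteq R(A)^\perp$, not $R(C)\subseteq R(B)$; the correct justification is the one you give first, via $R(A)\subseteq R(B)$.)

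The repair is one line and is what the paper does implicitly: from $A=B-C$ with $R(C)\subseteq R(B)$ one gets $R(A)\subseteq R(B)+R(C)=R(B)$ directly, with no need for $A=P_AB$ at all. Your closing ``alternative route'' through the matrix form of Proposition \ref{left*} is sound and does use $R(C)\subseteq R(B)$ correctly (via Douglas' lemma to write $C=BX$ and read off $c_{21}=c_{22}x$), so the proof is salvageable either way; but as the main argument stands, the converse has a genuine hole. A final caution: $A=P_AB$ alone is equivalent only to $A^*A=A^*B$, not to the left star order, so your remark that it ``delivers'' both halves should be dropped.
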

\begin{proof}
	If $A$ $^*\hspace{-5pt}\leq B$, $B=A+(B-A)$ and $A^*(B-A)=0$. Thus, $C:=B-A$ verifies $R(C)\subseteq R(B)$ and $A^*C=0.$  Conversely, if $B=A+C$ with $R(C)\subseteq R(B)$ and $A^*C=0$ then, trivially,   $A^*B=A^*A$ and $R(A)\subseteq R(B)$; i.e., $A$ $^*\hspace{-5pt}\leq B$.  
\end{proof}

In \cite{MR2293588},  the {\it logic order} between selfadjoint operators was introduced: for $A,B\in L(\cH)$ selfadjoint operators, $A$ is less than or equal to $B$ with respect to the logic order if there exists a selfadjoint operator $C\in L(\cH)$  such that $B=A+C$ and $AC=0.$ Notice that the above proposition shows that the logic order is the restriction of the left star order to selfadjoint operators. More generally, $A\overset{*}{\leq} B$ if and only if $B=A+C$ with  $C\in L(\cH,\cK)$ such that $A^*C=AC^*=0.$

\begin{cor} Let $A,B\in L(\cH, \cK)$ such that $A\overset{s}{\leq}B$. Then, $A\overset{*}{\leq}B$ if and only if $B^\dagger A$ and $(B^*)^\dagger A^*\in \cP$.
\end{cor}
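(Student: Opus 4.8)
The plan is to use the characterization of $\overset{*}{\leq}$ in terms of orthogonal projections from Lemma 2.2(1), namely that $A\overset{*}{\leq} B$ if and only if $A=P_AB=BP_{A^*}$, and to translate each of these two identities into a statement about the Moore-Penrose inverses $B^\dagger A$ and $(B^*)^\dagger A^*$ under the standing hypothesis $A\overset{s}{\leq}B$ (which guarantees $R(A)\subseteq R(B)$, so $B^\dagger A\in L(\cH)$, and $R(A^*)\subseteq R(B^*)$, so $(B^*)^\dagger A^*\in L(\cK)$). Since $\cP$ is the set of orthogonal projections, the corollary asks exactly that both $B^\dagger A$ and $(B^*)^\dagger A^*$ be idempotent and selfadjoint, and the proof should establish the two-way equivalence by showing each of these properties corresponds to one of the star-order conditions.

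First I would prove the forward implication. Assume $A\overset{*}{\leq} B$. From Proposition 4.9's proof (more precisely the proof of Proposition~\ref{minusinnerinverse}) we already know that $Q:=B^\dagger A$ and $\tilde Q:=(B^*)^\dagger A^*$ are \emph{oblique} projections whenever $A\overset{-}{\leq}B$, and $A\overset{*}{\leq} B\Rightarrow A\overset{-}{\leq}B$ by Lemma~\ref{relation}; so idempotency is free. It remains to check selfadjointness. For $Q=B^\dagger A$: using $A=BP_{A^*}$ and $B^\dagger B=P_{B^*}$ together with $R(A^*)\subseteq R(B^*)$ (so $P_{B^*}P_{A^*}=P_{A^*}$), one computes $B^\dagger A=B^\dagger B P_{A^*}=P_{B^*}P_{A^*}=P_{A^*}$, which is an orthogonal projection. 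Symmetrically, $(B^*)^\dagger A^*=P_{B}P_{A}=P_A\in\cP$, using $A^*=B^*P_A$. This also makes the converse transparent.

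For the converse, suppose $B^\dagger A\in\cP$ and $(B^*)^\dagger A^*\in\cP$. The key observation is that the range of $B^\dagger A$ is contained in $\overline{R(B^*)}$ and its action recovers $A$ via $B(B^\dagger A)=BB^\dagger A=P_{\overline{R(B)}}A=A$ (using $R(A)\subseteq R(B)$, exactly as in equation (\ref{1}) of the text). Now if $B^\dagger A$ is an orthogonal projection, I would identify its range: since $B(B^\dagger A)=A$ we get $R(A)\subseteq R(B B^\dagger A)$, and combined with $N(B^\dagger A)\supseteq N(A)$ one shows $B^\dagger A=P_{A^*}$; then $A=B(B^\dagger A)=BP_{A^*}$, i.e. $AA^*=BA^*$. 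Dually, $(B^*)^\dagger A^*\in\cP$ forces $(B^*)^\dagger A^*=P_A$ and hence $A^*=B^*P_A$, i.e. $A^*A=A^*B$. Together these two identities are precisely $A\overset{*}{\leq} B$ by Lemma~2.2(1).

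The main obstacle I anticipate is the bookkeeping with domains in the converse direction: $B^\dagger$ and $(B^*)^\dagger$ may be unbounded, so one must be careful that all the compositions $B^\dagger A$, $B(B^\dagger A)$, $(B^*)^\dagger A^*$ are genuinely well-defined (this is exactly where the hypothesis $A\overset{s}{\leq}B$ is used), and that an orthogonal projection with the right range/nullspace data is \emph{uniquely} determined — i.e. that knowing $B^\dagger A$ is selfadjoint and idempotent, together with $R(B^\dagger A)\subseteq\overline{R(B^*)}$ and $N(B^\dagger A)\supseteq N(A)$ and $B(B^\dagger A)=A$, pins it down to $P_{A^*}$. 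This uniqueness step is where I would spend the most care; everything else is a direct application of Lemma~2.2(1) and the Moore-Penrose identities $B^\dagger B=P_{\overline{R(B^*)}}$, $BB^\dagger = P_{\overline{R(B)}}|_{\cD(B^\dagger)}$ already invoked in the paper.
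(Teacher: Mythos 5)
Your proposal is correct and follows essentially the same route as the paper: the forward direction computes $B^\dagger A=B^\dagger BP_{A^*}=P_{B^*}P_{A^*}=P_{A^*}$ (and dually $(B^*)^\dagger A^*=P_A$), and the converse uses idempotency to get $AB^\dagger A=A$, hence $N(B^\dagger A)=N(A)$, which pins the orthogonal projection $B^\dagger A$ down to $P_{A^*}$ and yields $A=BB^\dagger A=BP_{A^*}$. The uniqueness step you flag is resolved exactly as you suggest, by the nullspace identification; the appeal to Proposition 4.2 for idempotency in the forward direction is harmless but redundant, since the direct computation $B^\dagger A=P_{A^*}$ already gives it.
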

\begin{proof}
If $A\overset{*}{\leq}B$ then $A=P_AB=BP_{A^*}.$ Thus, $B^\dagger A=B^\dagger B P_{A^*}= P_{B^*}P_{A^*}=P_{A^*}$ because $R(A^*)\subseteq R(B^*).$ Analogously, $(B^*)^{\dagger} A^*=(B^*)^{\dagger} B^* P_A=P_A.$ Conversely, if $B^\dagger A\in\cP$ then $B^\dagger AB^\dagger A=B^\dagger A$ and so $AB^\dagger A= A$. From this, $N(B^\dagger A)=N(A),$ so that $B^\dagger A=P_{A^*}.$ Therefore, $A=BB^\dagger A=BP_{A^*}.$ Similarly, from  $(B^*)^\dagger A^*\in \cP$, we get that $A^*=B^*P_{A}.$ Therefore, $A\overset{*}{\leq}B$.

\end{proof}

The following is an equivalent way of presenting the minus order. See \cite{MR840614}. 
\begin{prop}
	Let $A,B\in L(\cH,\cK)$. The following conditions are equivalent:
	\begin{enumerate}
	\item $A\overset{-}{\leq}B;$ 
	\item there exist $X\in L(\cK)$ and $Y\in L(\cH)$ such that $A=XB=BY$ and $A=XA.$
	\end{enumerate}
\end{prop}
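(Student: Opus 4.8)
The plan is to show the equivalence in both directions, exploiting the matrix representation machinery already developed, in particular Proposition~\ref{minusmatrix} and Lemma~\ref{innerinv}.

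First, assume $A\overset{-}{\leq}B$. By definition there are oblique projections $\tilde Q,Q$ with $A=\tilde Q B=BQ$. Setting $X:=\tilde Q\in L(\cK)$ and $Y:=Q\in L(\cH)$ we immediately get $A=XB=BY$. It remains to see $A=XA$, i.e.\ $\tilde Q A=A$; but $\tilde Q A=\tilde Q\tilde Q B=\tilde Q B=A$ since $\tilde Q$ is idempotent. This direction is essentially a one-line computation.

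For the converse, suppose there are $X\in L(\cK)$, $Y\in L(\cH)$ with $A=XB=BY$ and $A=XA$. From $A=BY$ we get $R(A)\subseteq R(B)$ by Douglas' lemma, and from $A^*=B^*X^*$ we get $R(A^*)\subseteq R(B^*)$; hence $A\overset{s}{\leq}B$. The idea is then to produce a single operator lying in $B[1]\cap A[1]$ and invoke Proposition~\ref{minusinnerinverse}. A natural candidate is $A^-:=Y B^\dagger X$, or more simply one argues directly: since $A=XA$, the operator $X$ acts as the identity on $R(A)$, so for any $B^=\in B[1]$ one has $A B^= A = X B B^= B Y = X B Y = X A = A$ (using $R(A)\subseteq R(B)\subseteq\cD(B^=)$ to make sense of $B^= A$, together with $A=BY$ and $BB^=B=B$). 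Thus every bounded or densely defined inner inverse of $B$ restricts to an inner inverse of $A$; in particular $B^\dagger\in B[1]\cap A[1]$, which is therefore nonempty. By Proposition~\ref{minusinnerinverse}, $A\overset{-}{\leq}B$.

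The main obstacle is purely bookkeeping about domains: the inner inverses here are possibly unbounded and only densely defined, so one must check that $R(A)\subseteq R(B)\subseteq\cD(B^-)$ legitimizes the expression $AB^-A$ and that the equalities $BB^-B=B$, $A=BY$, $A=XB$, $A=XA$ can be chained in the right order on the relevant domain. Once the domain issues are handled, both implications are short, and the key conceptual input is simply the characterization of the minus order via shared inner inverses from Proposition~\ref{minusinnerinverse}.
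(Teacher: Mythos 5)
Your proof is correct, but the converse direction takes a genuinely different route from the paper's. For $1\Rightarrow 2$ both arguments are the same one-liner (take $X=\tilde Q$, $Y=Q$; the paper gets $XA=A$ from $R(\tilde Q)=\overline{R(A)}$, you get it from idempotency of $\tilde Q$ -- both fine). For $2\Rightarrow 1$, the paper constructs explicit projections $\tilde Q:=XP_B$ and $Q:=YP_{A^*}$ and verifies by direct computation (using $B^\dagger$ and $A^\dagger$) that they are idempotent and satisfy $A=\tilde QB=BQ$, so the minus order follows straight from the definition. You instead observe that $A\overset{s}{\leq}B$ and that the identity $AB^{=}A=XBB^{=}BY=XBY=XA=A$ puts every $B^{=}\in B[1]$ (in particular $B^\dagger$) into $A[1]$, and then invoke Proposition~\ref{minusinnerinverse}. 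Your domain bookkeeping is handled correctly, since $R(A)\subseteq R(B)\subseteq\cD(B^{=})$ legitimizes $B^{=}A$, and all three hypotheses $A=XB$, $A=BY$, $A=XA$ are genuinely used. What each approach buys: yours is shorter and reuses an already-proved characterization (and in fact proves slightly more, namely $B[1]\subseteq A[1]$ under these hypotheses, echoing Lemma~\ref{innerinv}); the paper's is self-contained at this point and exhibits the projections $\tilde Q$, $Q$ explicitly in terms of the given $X$ and $Y$. One cosmetic remark: Douglas' lemma is not needed to get $R(A)=R(BY)\subseteq R(B)$; the inclusion is immediate.
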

\begin{proof}
	If $A\overset{-}{\leq}B$ then there exist projections $\tilde{Q}$ and $Q$ with $R(\tilde{Q})=\overline{R(A)}$ such that $A=\tilde{Q}B=BQ$ and $A=\tilde{Q}A$.
	
	Conversely, suppose that there exist $X\in L(\cK)$ and $Y\in L(\cH)$ such that $A=XB=BY$ and $A=XA.$ Define $\tilde{Q}:=XP_B$ and $Q:=YP_{A^*}.$ Then, $A=XB=XP_BB=\tilde{Q}B$ and $A=BY=BYP_{A^*}=BQ.$ Finish by proving that $\tilde{Q}$ and $Q$ are projections. Indeed,
	$(\tilde{Q}^*)^2=P_BX^*P_BX^*=(B^*)^\dagger B^*X^*P_BX^*=(B^*)^\dagger A^* P_BX^*=(B^*)^\dagger A^* X^*=(B^*)^\dagger A^* =(B^*)^\dagger B^*X^*=P_BX^*=\tilde{Q}^*.$
	Similarly, $Q^2=YP_{A^*}YP_{A^*}=YA^\dagger AYP_{A^*}=YA^\dagger XBYP_{A^*}=YA^\dagger XA=YA^\dagger A=YP_{A^*}=Q.$ 
	
\end{proof}

\begin{cor} Let $A,B\in L(\cH, \cK)$ such that $A\overset{s}{\leq}B$. Then, $A\overset{-}{\leq}B$ if and only if $B^\dagger A\in \cQ$.

\end{cor}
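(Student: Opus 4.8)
The plan is to exploit the immediately preceding Proposition, which characterizes $A \overset{-}{\leq} B$ by the existence of $X \in L(\cK)$ and $Y \in L(\cH)$ with $A = XB = BY$ and $A = XA$. Since $A \overset{s}{\leq} B$ is assumed, $B^\dagger A$ is a bounded operator on $\cH$ (because $R(A) \subseteq R(B) \subseteq \cD(B^\dagger)$), so the statement makes sense, and moreover Douglas' lemma already guarantees a $Y \in L(\cH)$ with $A = BY$; the natural candidate is $Y = B^\dagger A$ itself, since $BB^\dagger A = P_{\overline{R(B)}}A = A$. So the whole content of the equivalence reduces to: this canonical $Y = B^\dagger A$ is a projection if and only if $A \overset{-}{\leq} B$.

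For the forward direction, assume $A \overset{-}{\leq} B$. By Proposition \ref{minusinnerinverse} (or directly from the definition), $B^\dagger \in A[1]$, i.e. $A B^\dagger A = A$. Then $(B^\dagger A)^2 = B^\dagger (A B^\dagger A) = B^\dagger A$, so $B^\dagger A \in \cQ$. This is essentially the computation $Q^2 = B^\dagger A B^\dagger A = B^\dagger A = Q$ already appearing in the proof of Proposition \ref{minusinnerinverse}, so I would simply cite that. For the converse, assume $B^\dagger A \in \cQ$, i.e. $B^\dagger A B^\dagger A = B^\dagger A$. Applying $B$ on the left and using $B B^\dagger A = A$ (valid since $R(A)\subseteq R(B)$) gives $A B^\dagger A = B B^\dagger A B^\dagger A = B B^\dagger A = A$; hence $B^\dagger \in A[1]$, so $B[1] \cap A[1] \neq \emptyset$, and Proposition \ref{minusinnerinverse} together with the hypothesis $A \overset{s}{\leq} B$ yields $A \overset{-}{\leq} B$.

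The only mild subtlety — and the one point that needs care rather than being a genuine obstacle — is justifying $B B^\dagger A = A$ and the boundedness of $B^\dagger A$ when $B$ need not have closed range: here $B^\dagger$ is the possibly unbounded Moore–Penrose inverse with domain $R(B) \oplus N(B^*)$, and one uses only that $R(A) \subseteq R(B) \subseteq \cD(B^\dagger)$ together with $B B^\dagger|_{\cD(B^\dagger)} = P_{\overline{R(B)}}$, exactly as in equation (\ref{1}) in the proof of Proposition \ref{minusinnerinverse}. With that observation the argument is a two-line computation in each direction; I would present it compactly, referencing Proposition \ref{minusinnerinverse} for the heavy lifting.
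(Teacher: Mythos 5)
Your proof is correct and follows essentially the same route as the paper: the forward direction is the computation $Q^2=B^\dagger A B^\dagger A=B^\dagger A$ already contained in the proof of Proposition \ref{minusinnerinverse}, and the converse multiplies the projection identity on the left by $B$ to get $AB^\dagger A=A$, so that $B^\dagger\in B[1]\cap A[1]$ and Proposition \ref{minusinnerinverse} applies. Your remark on the domain issues for the possibly unbounded $B^\dagger$ is exactly the justification implicit in equation (\ref{1}) of that proof.
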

\begin{proof}
 If $A\overset{-}{\leq}B$ then, by the proof of Proposition \ref{minusinnerinverse},  we have that $Q:=B^\dagger A$ is a bounded projection. Conversely, if $B^\dagger A$ is a projection then $B^\dagger A=B^\dagger AB^\dagger A$ and so, as $R(A)\subseteq R(B)$,  $A=BB^\dagger A=BB^\dagger AB^\dagger A=AB^\dagger A$. Therefore, $B^\dagger\in A[1]$ and so, since $A\overset{s}{\leq}B$,  by Proposition \ref{minusinnerinverse}, $A\overset{-}{\leq}B$.

\end{proof}

\begin{prop}
Let $A,B\in L(\cH, \cK)$ . Then the following conditions are equivalent:
\begin{enumerate}
\item $AA^*A =AB^*A;$ 
\item $B=A+C$ with $C\in L(\cH,\cK)$ such that $AC^*A=0;$
\end{enumerate}
\end{prop}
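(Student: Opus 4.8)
The statement asserts the equivalence of $AA^*A = AB^*A$ with the existence of a decomposition $B = A + C$ where $AC^*A = 0$. The natural candidate for $C$ is simply $C := B - A$, so the implication $(1) \Rightarrow (2)$ should be essentially a rewriting: if $AA^*A = AB^*A$, then $A(B-A)^*A = AB^*A - AA^*A = 0$, so $C = B-A$ works. This direction requires no real work beyond this algebraic manipulation.

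For the converse $(2) \Rightarrow (1)$, suppose $B = A + C$ with $AC^*A = 0$. Then $AB^*A = A(A+C)^*A = AA^*A + AC^*A = AA^*A$, which is exactly condition (1). So in fact both directions are immediate consequences of expanding $(B-A)^*$ or $(A+C)^*$ and using linearity of the adjoint and of operator multiplication.

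Since this is such a short argument, the main thing to get right is simply the bookkeeping of which operator plays the role of $C$ and making sure the two conditions match up symbolically; there is no genuine obstacle here. One could optionally remark that this mirrors the pattern already seen for the left star order ($A^*C = 0$) and the star order ($A^*C = AC^* = 0$), with the diamond-type condition $AA^*A = AB^*A$ corresponding to the weaker ``sandwiched'' vanishing $AC^*A = 0$; but that is commentary rather than part of the proof. I would simply write out the two one-line computations.
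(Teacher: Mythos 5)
Your proof is correct and takes essentially the same approach as the paper: both directions amount to setting $C=B-A$ and expanding by linearity of the adjoint. The paper merely phrases the forward direction via the matrix representation of $C$ (whose $(1,1)$ entry vanishes because $AA^*A=AB^*A$ forces $b_{11}=a$), while your direct computation $A(B-A)^*A=AB^*A-AA^*A=0$ is the same argument without the block notation.
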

\begin{proof}
 If item 1 holds, then $C=\left(\begin{matrix}0 & b_{12} \\ b_{21} & b_{22}\end{matrix}\right)$ verifies the desired conditions. Conversely, if item 2 holds then, clearly, $AA^*A =AB^*A.$

\end{proof}

\section{Partial orders onto projections}\label{facto}

Evidently, all the operator orders studied in this paper are closely linked to projections. Our goal in this section is to highlight this relationship in different senses. To this aim, we begin with the following lemma, where for fixed  order relation $\prec$ in $L(\cH)$, we denote by $[0, I]_\prec:=\{T\in L(\cH): 0 \prec T\prec I\}.$ 
\begin{lem}\label{I2}The following hold:
\begin{enumerate}
\item $[0, I]_{\overset{*}{\leq}}=[0, I]_{^*\leq}=[0, I]_{\leq^\ast}=\cP$.
\item $[0, I]_{\overset{-}{\leq}}=\cQ$. 
\item $[0, I]_{\overset{\diamond}{\leq}}=\cP\cP$.
\item $[0, I]_{\overset{+}{\leq}}=\cQ\cQ.$
\end{enumerate}
\end{lem}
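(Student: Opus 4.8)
The plan is to prove the four identities separately, using the matrix-representation characterizations (Corollary \ref{order**}, Proposition \ref{minusmatrix}, Proposition \ref{diamond3}) together with the descriptions of the order intervals that each of them forces when $B=I$. Throughout we work with $A\in L(\cH)$ and take $B=I$, so the relevant decomposition is $\cH=\overline{R(A^*)}\oplus N(A)=\overline{R(A)}\oplus N(A^*)$, and we must be careful that for a single operator $A$ these two decompositions need not coincide.

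For item 1, the inclusion $\cP\subseteq[0,I]_{\overset{*}{\leq}}$ is immediate since for $P\in\cP$ we have $P^*P=P=P^*I$ and $PP^*=P=IP^*$, hence $P\overset{*}{\leq}I$; and also $P\,^*\hspace{-3pt}\leq I$ and $P\leq^* I$ trivially. Conversely, if $0\overset{*}{\leq}A$ and $A\overset{*}{\leq}I$, Corollary \ref{order**} gives (with $B=I$) that $I=\left(\begin{smallmatrix} a & 0\\ 0 & b_{22}\end{smallmatrix}\right)$ in the decomposition induced by $A$, so $a$ is the identity on $\overline{R(A^*)}$, $b_{22}$ is the identity on $N(A)$, and — crucially — the two decompositions $\overline{R(A^*)}\oplus N(A)$ and $\overline{R(A)}\oplus N(A^*)$ must agree for the off-diagonal blocks of $I$ to vanish; this forces $\overline{R(A)}=\overline{R(A^*)}$ and $A=P_{\overline{R(A)}}$, an orthogonal projection. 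The same argument applies to $^*\hspace{-3pt}\leq$ and $\leq^*$ using Proposition \ref{left*} and its right-hand analogue, since $0\,^*\hspace{-3pt}\leq A$ already forces $N(A)=N(A^*)$-type symmetry. For item 2, $P\in\cQ$ gives $P=P\cdot I=I\cdot P$ with $P$ an oblique projection, so $\cQ\subseteq[0,I]_{\overset{-}{\leq}}$; conversely, if $A\overset{-}{\leq}I$ then by Proposition \ref{minusmatrix} with $B=I$ we get $b_{22}=1$, $yb_{22}=0$, $b_{22}x=0$, hence $x=y=0$ and $a=1$, so $A=\left(\begin{smallmatrix}1 & 0\\ 0 & 0\end{smallmatrix}\right)$ in the decomposition $\cH=\overline{R(A^*)}\oplus N(A)$ read into $\cH=\overline{R(A)}\oplus N(A^*)$, which exactly says $A^2=A$, i.e.\ $A\in\cQ$ (the condition $0\overset{-}{\leq}A$ is automatic). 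Alternatively one invokes Proposition \ref{minusinnerinverse}: $A\overset{-}{\leq}I$ iff $A\overset{s}{\leq}I$ and $I[1]\cap A[1]\neq\emptyset$, and $I\in A[1]$ means $A^2=A$.

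For item 3, the containment $\cP\cP\subseteq[0,I]_{\overset{\diamond}{\leq}}$: if $A=PR$ with $P,R\in\cP$ then $R(A)\subseteq R(P)$, $R(A^*)\subseteq R(R)$, and $AA^*A=PRR^*P\cdot PR=PRPR$ while $AB^*A=AA=PRPR$ with $B=I$, so $AA^*A=AIA$ trivially and $A\overset{\diamond}{\leq}I$. For the reverse inclusion, if $A\overset{\diamond}{\leq}I$ then $A\overset{s}{\leq}I$ is automatic and $AA^*A=AA$, i.e.\ $A^*$ acts as the identity on $R(A^*)$ in the sense $AA^*A=A^2$; equivalently $A(A^*-I)A=0$, which says $R(A)\subseteq N((A^*-I)A)$... the cleanest route is Proposition \ref{inndiamond}: $A\overset{\diamond}{\leq}I$ is equivalent (for the algebraic part) to $A^\dagger I A^\dagger=A^\dagger$ on $\cD(A^\dagger)$, i.e.\ $A^\dagger A A^\dagger=A^\dagger$ — but that is always true! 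So we instead need to extract the structure directly: $AA^*A=A^2$ means $A|_{R(A^*)}$ composed with $A^*|_{R(A)}$ acts trivially, and combined with closedness one shows $A$ has the partial-isometry-like form making $A=P_{R(A)}\,P_{R(A^*)}$ a product of two orthogonal projections. I will present this as: $A\overset{\diamond}{\leq}I$ iff $P_A A P_{A^*}=A$ and $AA^*A=A^2$; the first gives that $A$ is "supported" on $\overline{R(A)}\times\overline{R(A^*)}$, and a short computation with the polar decomposition $A=U|A|$ shows $AA^*A=A^2$ forces $|A|=P_{A^*}$ restricted appropriately, whence $A=P_A P_{A^*}\in\cP\cP$. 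Conversely every product of two orthogonal projections has closed-form modulus making the identity hold. Finally, item 4 follows by combining item 3 with Remark \ref{adiamond} and a direct argument: $\cQ\cQ\subseteq[0,I]_{\overset{+}{\leq}}$ since if $A=\tilde Q Q$ with $\tilde Q,Q\in\cQ$ then $A=\tilde Q\, I\, Q$ and $A\overset{s}{\leq}I$ is automatic, so $A\overset{+}{\leq}I$; conversely, if $A\overset{+}{\leq}I$, write $A=\tilde Q I Q=\tilde Q Q$ with $\tilde Q,Q$ projections, which is literally a product of two oblique projections, so $A\in\cQ\cQ$.

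The main obstacle is item 3, specifically the reverse inclusion $[0,I]_{\overset{\diamond}{\leq}}\subseteq\cP\cP$: the hypothesis $AA^*A=A^2$ together with $P_A A P_{A^*}=A$ must be massaged into the explicit factorization $A=P_{\overline{R(A)}}P_{\overline{R(A^*)}}$, and this requires knowing that $A$, written as a block operator between $\overline{R(A^*)}\oplus N(A)$ and $\overline{R(A)}\oplus N(A^*)$, reduces to a single block $a\colon\overline{R(A^*)}\to\overline{R(A)}$ satisfying $aa^*a=a\cdot a$ — but the right side $a^2$ only makes sense after identifying the two Hilbert spaces, which is where the product-of-projections geometry enters. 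I would handle this by using Lemma 2.2(2) ($AA^*A=AB^*A$ iff $P_ABP_{A^*}=A$; with $B=I$ this is $P_A P_{A^*}=A$ directly!), which collapses the whole argument: $A\overset{\diamond}{\leq}I$ iff $A\overset{s}{\leq}I$ and $P_A P_{A^*}=A$, and the range conditions in $A\overset{s}{\leq}I$ are vacuous, so $[0,I]_{\overset{\diamond}{\leq}}=\{P_A P_{A^*}: A\in L(\cH)\}$. It remains only to observe that $\{P_AP_{A^*}:A\in L(\cH)\}=\cP\cP$: "$\subseteq$" is clear, and for "$\supseteq$", given $P,R\in\cP$ set $A:=PR$; then $\overline{R(A)}\subseteq R(P)$ and $\overline{R(A^*)}\subseteq R(R)$ with possible strict inclusion, so we cannot immediately conclude $P_A P_{A^*}=PR=A$. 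One fixes this by noting $P_A\le P$ and $P_{A^*}\le R$ and $P_A P R = PR$, $PR P_{A^*}\stackrel{?}{=}PR$; a direct check using $P_{A^*}R=P_{A^*}$ (since $R(A^*)\subseteq R(R)$ so $P_{A^*}\le R$) and $PP_A=P_A$ gives $P_A P_{A^*}=P_AP RP_{A^*}$, and one shows this equals $PR$ using that $N(A)=N(PR)\supseteq N(R)$ hence $P_{A^*}\ge$ ... — to avoid this subtlety I would instead simply verify $P,R\in\cP$ satisfy $PR\overset{\diamond}{\leq}I$ directly from the definition (which I did above: $PRR^*P\cdot PR=(PR)^2=PR\cdot I\cdot PR$), giving $PR\in[0,I]_{\overset{\diamond}{\leq}}$ without needing the identity $P_AP_{A^*}=PR$, thereby closing item 3.
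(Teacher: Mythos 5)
The paper states Lemma \ref{I2} without any proof, so there is no argument of the authors' to measure yours against; judged on its own terms your proposal is essentially correct, and the mechanism you isolate is the right one: with $B=I$ every range condition ($A\overset{s}{\leq}I$, $R(A)\subseteq R(I)$, \dots) is vacuous, so each interval is cut out by a purely algebraic identity. Items 2 and 4 are immediate from the definitions ($A=\tilde QI=IQ$ forces $A=\tilde Q=Q\in\cQ$; $A=\tilde QIQ=\tilde QQ\in\cQ\cQ$). For item 3, specializing the preliminary lemma's item 2 ($AA^*A=AB^*A$ iff $P_ABP_{A^*}=A$) to $B=I$ gives $[0,I]_{\overset{\diamond}{\leq}}=\{A:P_AP_{A^*}=A\}\subseteq\cP\cP$, and your direct check that $PR(PR)^*PR=(PR)^2$ for $P,R\in\cP$ gives the reverse inclusion; this is the clean route, and the identity $T=P_TP_{T^*}$ for $T\in\cP\cP$ that you were trying to avoid is in any case quoted by the paper from \cite{MR2775769} just before the propositions on $\cP\cP$ and $\cQ\cQ$, so either way the subtlety you flagged is not an obstacle.

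Two subsidiary claims are wrong, though neither is fatal because you supply correct alternatives alongside. First, in item 1 the remark that $0\;^*\hspace{-5pt}\leq A$ ``forces $N(A)=N(A^*)$-type symmetry'' is empty: $0\prec A$ holds for every $A$ under all of these relations and forces nothing. What actually closes the left-star case is that Proposition \ref{left*} with $B=I$ makes the $(1,2)$-block of $I$ vanish, and since $I$ is self-adjoint the $(2,1)$-block $P_{N(A^*)}|_{\overline{R(A^*)}}$ vanishes as well, whence $\overline{R(A)}=\overline{R(A^*)}$ and $a=1$; more simply, $A^*A=A^*$ already yields $A=A^*=A^2$ in one line. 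Second, in item 2 the matrix computation is incorrect: the $(1,2)$-block of $I$ with respect to the skew pair of decompositions $\overline{R(A^*)}\oplus N(A)\to\overline{R(A)}\oplus N(A^*)$ is $P_A|_{N(A)}$, which is not zero for a general oblique projection (take $A=\left(\begin{smallmatrix}1&1\\0&0\end{smallmatrix}\right)$, for which $A\overset{-}{\leq}I$ yet $P_A|_{N(A)}\neq 0$), so one cannot conclude $yb_{22}=b_{22}x=0$ and $x=y=0$. Your fallback via Proposition \ref{minusinnerinverse} ($I\in A[1]$ iff $A^2=A$), or simply reading the definition $A=\tilde QI=IQ$, is the argument to keep.
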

\begin{proof}
It is straightforward.
\end{proof}

As we have said, the star order coincides with the L$\ddot{\text o}$wner order restricted to orthogonal projections. More precisely:

\begin{lem} \label{orP} On orthogonal projections the partial orders $\leq$, $\overset{*}{\leq}$, $\overset{-}{\leq}$, $\overset{\diamond}{\leq}$ and  $\overset{+}{\leq}$ coincide.
\end{lem}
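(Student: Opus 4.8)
The plan is to show that on the set $\cP$ of orthogonal projections the five relations $\leq$, $\overset{*}{\leq}$, $\overset{-}{\leq}$, $\overset{\diamond}{\leq}$ and $\overset{+}{\leq}$ all agree. By Lemma \ref{relation} we already know the chain of implications $A\overset{*}{\leq}B\Rightarrow A\overset{-}{\leq}B\Rightarrow A\overset{+}{\leq}B$ and $A\overset{*}{\leq}B\Rightarrow A\overset{\diamond}{\leq}B\Rightarrow A\overset{+}{\leq}B$, valid for arbitrary operators. So the only thing left is to close the loop: it suffices to prove, for $A,B\in\cP$, that $A\overset{+}{\leq}B$ implies $A\leq B$ (L\"owner order), and that $A\leq B$ implies $A\overset{*}{\leq}B$. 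The second of these is classical and quick: if $A,B\in\cP$ with $A\leq B$, then $R(A)\subseteq R(B)$, and for orthogonal projections this forces $BA=AB=A$, hence $A^*A=A=A^*B$ and $AA^*=A=BA^*$, i.e. $A\overset{*}{\leq}B$. (One may also simply invoke that the star order restricted to $\cP$ is the L\"owner order, the fact recalled in the Introduction.)

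For the remaining implication $A\overset{+}{\leq}B\Rightarrow A\leq B$ on $\cP$, the first step is to use $A\overset{+}{\leq}B\Rightarrow A\overset{s}{\leq}B$ (again Lemma \ref{relation}), which gives $R(A)\subseteq R(B)$ since $A,B$ are selfadjoint; for orthogonal projections this already yields $AB=BA=A$ and therefore $A\leq B$. Thus in fact the space pre-order alone suffices to collapse everything on $\cP$. A clean way to write the whole proof is then: for $A,B\in\cP$,
\[
A\overset{*}{\leq}B\ \Rightarrow\ A\overset{-}{\leq}B\ \Rightarrow\ A\overset{+}{\leq}B\ \Rightarrow\ A\overset{s}{\leq}B\ \Rightarrow\ R(A)\subseteq R(B)\ \Rightarrow\ A\leq B\ \Rightarrow\ A\overset{*}{\leq}B,
\]
and symmetrically $A\overset{*}{\leq}B\Rightarrow A\overset{\diamond}{\leq}B\Rightarrow A\overset{+}{\leq}B$ closes the diamond order into the same cycle. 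Hence all five relations coincide on $\cP$.

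There is essentially no obstacle here; the only point requiring a word of care is the elementary fact that for $P,Q\in\cP$ the inclusion $R(P)\subseteq R(Q)$ is equivalent to $QP=PQ=P$ and to $P\leq Q$. I would state this as a one-line observation: $R(P)\subseteq R(Q)$ gives $QP=P$, taking adjoints $PQ=P$, and then $Q-P=(Q-P)^2\geq 0$. So the proof is a short assembly of Lemma \ref{relation} with this standard property of orthogonal projections.

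\begin{proof}
By Lemma \ref{relation}, for any $A,B\in L(\cH)$ we have $A\overset{*}{\leq}B\Rightarrow A\overset{-}{\leq}B\Rightarrow A\overset{+}{\leq}B\Rightarrow A\overset{s}{\leq}B$ and $A\overset{*}{\leq}B\Rightarrow A\overset{\diamond}{\leq}B\Rightarrow A\overset{+}{\leq}B$. Therefore it is enough to show that, for $A,B\in\cP$, the conditions $A\overset{s}{\leq}B$, $A\leq B$ and $A\overset{*}{\leq}B$ are equivalent.

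Suppose first that $A,B\in\cP$ and $A\overset{s}{\leq}B$. Then $R(A)\subseteq R(B)$, so $BA=A$; taking adjoints, $AB=A$ as well. Hence $(B-A)^2=B-AB-BA+A=B-A$ and, since $B-A$ is selfadjoint, $B-A\geq 0$, that is $A\leq B$.

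Now suppose $A,B\in\cP$ and $A\leq B$. Then $R(A)\subseteq R(B)$, so $BA=A$ and, taking adjoints, $AB=A$. Consequently $A^*A=A=A^*B$ and $AA^*=A=BA^*$, which means $A\overset{*}{\leq}B$. This closes the cycle and proves that all five orders coincide on $\cP$.
\end{proof}
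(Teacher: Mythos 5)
Your proof is correct and follows essentially the same route as the paper: both reduce via Lemma \ref{relation} to showing that $\overset{s}{\leq}$, $\leq$ and $\overset{*}{\leq}$ coincide on $\cP$, using the standard fact that $R(P)\subseteq R(P')$ forces $P=P'P=PP'$. Your write-up merely spells out a couple of elementary steps (e.g.\ $(B-A)^2=B-A$) that the paper leaves implicit.
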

\begin{proof}
Let $P,P'\in \cP.$ By Lemma \ref{relation}, it suffices to prove that $P\leq P'\Leftrightarrow P\overset{s}{\leq}P' \Leftrightarrow P\overset{*}{\leq}P'$. Clearly, $P\leq P'$ if and only if $R(P)\subseteq R(P'),$ i.e., $P\overset{s}{\leq}P'.$ Now, if $R(P)\subseteq R(P')$ then $P=P'P=PP'$ and so $P\overset{*}{\leq}P'.$ Conversely, if  $P\overset{*}{\leq}P'$ then $P=P'P=PP'$, i.e.,  $R(P)\subseteq R(P')$ or, equivalently, $P\overset{s}{\leq}P'.$ 
\end{proof}

The L$\ddot{\text o}$wner order can  also be related to the minus order on projections.

\begin{lem} Let $Q,Q'\in \cQ.$ The following conditions are equivalent:
\begin{enumerate}
\item $Q\overset{-}{\leq} Q';$
\item $Q\overset{s}{\leq}Q';$
\item $P_{Q}\leq P_{Q'}$ and $P_{Q'^*}\leq P_{Q^*}.$
\end{enumerate}
\end{lem}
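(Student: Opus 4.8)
The plan is to establish the cycle of implications $1 \Rightarrow 3 \Rightarrow 2 \Rightarrow 1$, exploiting the fact (Lemma~\ref{relation}) that the minus order implies $\overset{s}{\leq}$, so that $1 \Rightarrow 2$ is free and only $2 \Rightarrow 1$ needs real work.

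First I would handle $1 \Rightarrow 2$: by Lemma~\ref{relation}, $Q\overset{-}{\leq}Q'$ implies $Q\overset{+}{\leq}Q'$, which implies $Q\overset{s}{\leq}Q'$; so $R(Q)\subseteq R(Q')$ and $R(Q^*)\subseteq R(Q'^*)$. Next, $2\Rightarrow 3$ is a translation into the language of orthogonal projections: $R(Q)\subseteq R(Q')$ says exactly that $P_{Q'}$ acts as the identity on $R(Q)$, equivalently $P_{Q}\leq P_{Q'}$ in the L\"owner order (recall $P_Q$ is the orthogonal projection onto $\overline{R(Q)}=R(Q)$ since $Q$ has closed range as a bounded projection); similarly $R(Q^*)\subseteq R(Q'^*)$ is equivalent to $P_{Q'^*}\leq P_{Q^*}$. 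Conversely, the same equivalences give $3\Rightarrow 2$, so in fact $2\Leftrightarrow 3$ is immediate and the only substantive implication is $3\Rightarrow 1$ (or $2 \Rightarrow 1$).

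For $2\Rightarrow 1$, the strategy is to produce the two oblique projections witnessing $Q\overset{-}{\leq}Q'$ explicitly. The natural candidates are $\tilde{Q}:=QQ'$ and $\hat{Q}:=Q'Q$; one checks $QQ'Q = Q$ using $R(Q)\subseteq R(Q')$ (which gives $Q'Q=Q$) together with the fact that $Q'$ restricted to $R(Q')$ is the identity, and dually $R(Q^*)\subseteq R(Q'^*)$ gives $Q'^*Q^*=Q^*$, i.e. $QQ'=Q$ as well. Wait---that would already give $Q=QQ'=Q'Q$, which is too strong; the correct reading is that $R(Q)\subseteq R(Q')$ yields $Q'Q=Q$ (since $Q'$ is identity on its range), while $R(Q^*)\subseteq R(Q'^*)$, i.e. $N(Q')\subseteq N(Q)$, yields $QQ'=Q$ (since $Q'$ is identity modulo its kernel and $Q$ kills that kernel). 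So $Q = Q'Q = QQ'$, and then $\tilde{Q}:=Q'Q$, $Q:=QQ'$ are idempotent: $(Q'Q)^2 = Q'(QQ')Q=Q'QQ=Q'Q$ and likewise $(QQ')^2=QQ'$; thus $Q = \tilde{Q}Q' $? No---we need $Q = \tilde{Q}Q'Q_0$ form. Concretely, take the inner-inverse route instead: $Q'$ is its own inner inverse, and $Q Q' Q = Q$ by the above, so $Q'\in Q[1]\cap Q'[1]$; since also $Q\overset{s}{\leq}Q'$, Proposition~\ref{minusinnerinverse} gives $Q\overset{-}{\leq}Q'$ directly.

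The cleanest path, and the one I would actually write, therefore bypasses explicit projections: prove $2\Rightarrow 1$ via Proposition~\ref{minusinnerinverse}, for which it suffices to verify $QQ'Q=Q$, and this single identity follows from the two range inclusions in item~2 as sketched above. The main obstacle is exactly pinning down this identity $QQ'Q=Q$ from $R(Q)\subseteq R(Q')$ and $R(Q^*)\subseteq R(Q'^*)$ without circular reasoning: one uses that $R(Q)\subseteq R(Q')$ implies $Q'x=x$ for $x\in R(Q)$, hence $Q'Q=Q$, whence $QQ'Q=QQ$; and then $QQ=Q$ finishes it, so actually only $R(Q)\subseteq R(Q')$ is needed for the inner-inverse condition, while both inclusions together are what Proposition~\ref{minusinnerinverse} additionally requires as the $\overset{s}{\leq}$ hypothesis. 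Everything else is routine.
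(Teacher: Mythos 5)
Your treatment of $1\Leftrightarrow 2$ is correct and is essentially the paper's argument: from $R(Q)\subseteq R(Q')$ one gets $Q'Q=Q$, and from $R(Q^*)\subseteq R(Q'^*)$, i.e. $N(Q')\subseteq N(Q)$, one gets $QQ'=Q$. Your momentary worry that $Q=QQ'=Q'Q$ is ``too strong'' is unfounded: this pair of identities is exactly the point, since $Q=QQ'$ and $Q=Q'Q$ already exhibit $Q$ itself as the two oblique projections required by the definition of $\overset{-}{\leq}$ (with $A=Q$, $B=Q'$, $\tilde{Q}=Q$ and the right factor also $Q$), which is how the paper concludes. Your alternative exit through $QQ'Q=Q$ and Proposition~\ref{minusinnerinverse} is also valid (you correctly note that the full hypothesis $Q\overset{s}{\leq}Q'$ is still needed there), just a longer detour than reading the projections off directly.

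The genuine problem is your $2\Leftrightarrow 3$. You assert that $R(Q^*)\subseteq R(Q'^*)$ is equivalent to $P_{Q'^*}\leq P_{Q^*}$. It is not: for orthogonal projections the L\"owner order is equivalent to inclusion of ranges in the \emph{same} direction (this is the content of the proof of Lemma~\ref{orP}), so $R(Q^*)\subseteq R(Q'^*)$ is equivalent to $P_{Q^*}\leq P_{Q'^*}$, the reverse of what you wrote. Indeed, with item~3 read literally the lemma is false: take $Q=0$ and $Q'=I$; then items 1 and 2 hold, and $P_Q=0\leq I=P_{Q'}$, but $P_{Q'^*}=I\not\leq 0=P_{Q^*}$. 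So item~3 as printed must be a typo for $P_{Q^*}\leq P_{Q'^*}$ (equivalently, $N(Q')\subseteq N(Q)$), and the paper's one-line appeal to Lemma~\ref{orP} only works for that corrected version. Rather than silently flipping an inequality to match the printed statement, you should either prove the corrected statement and flag the discrepancy, or exhibit the counterexample; as written, your claimed equivalence in the $2\Leftrightarrow 3$ step is false.
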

\begin{proof}
If $Q\overset{s}{\leq}Q'$ then $R(Q)\subseteq R(Q')$ and $R(Q^*)\subseteq R((Q')^*)$. Thus, $Q=Q'Q=QQ'$ and so $Q\overset{-}{\leq} Q'.$ The converse always holds. Finally, $2\Leftrightarrow 3$ follows from the proof of Lemma \ref{orP}.
\end{proof}

Motivated by Lemma \ref{I2}, we focus now on studying in more detail the relationship between the diamond and plus orders with the sets $\cP\cP$ and  $\cQ\cQ$, respectively. Recall that if  $\cA,\cB$ are subsets of $L(\cH, \cK)$, $\cA\cB=\{AB: A\in\cA \; \text{and} \; B\in \cB\}$, then partial orderings on the domain of both components induce a partial ordering on the whole set. More precisely, given $(\cA, \leq_{\cA})$ and $(\cB, \leq_{\cB})$ two partially ordered subsets of $L(\cH, \cK)$, then if $A=A_1B_1$ and $B=A_2B_2$ with $A_1,A_2\in \cA$ and $B_1,B_2\in\cB$ then the relation $A\tilde{\leq}B$ if $A_1\leq_{\cA}A_2$ and $B_1\leq_{\cB}B_2$ defines a partial order on $\cA\cB.$
Under this scheme, we propose to study the diamond order onto the set $\cP\cP$ and the plus order onto $\cQ\cQ$. 
Before stating our next result, observe that if $T\in \cP\cP$ then $T=P_{T}P_{T^*}$, see \cite{MR2775769} for this fact and some other properties of $\cP\cP.$

\begin{prop}
Let $T,T'\in \cP\cP.$ The following conditions are equivalent:
\begin{enumerate}
\item $T\overset{\diamond}{\leq} T';$ 
\item $T\overset{s}{\leq} T'$.  

If $T'\in \cP\cP$ has closed range then the previous conditions are equivalent to:
\item $P_{T}\leq P_{T'}$ and $P_{T^*}\leq P_{T'^*}.$ 
\end{enumerate}
\end{prop}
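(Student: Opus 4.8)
The plan is to prove the cycle $1 \Rightarrow 3 \Rightarrow 2 \Rightarrow 1$ when $T'$ has closed range, and separately handle the equivalence $1 \Leftrightarrow 2$ in general. Since $T,T' \in \cP\cP$, we may write $T = P_T P_{T^*}$ and $T' = P_{T'} P_{T'^*}$ by the cited fact from \cite{MR2775769}. The implication $1 \Rightarrow 2$ is immediate from Lemma \ref{relation}, since $A\overset{\diamond}{\leq} B$ implies $A\overset{s}{\leq} B$. So the substance is the reverse implication $2 \Rightarrow 1$, i.e., that $T\overset{s}{\leq} T'$ forces $TT^*T = TT'^*T$.

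For $2 \Rightarrow 1$: assuming $T\overset{s}{\leq} T'$, I would use Lemma 2.2(2), which says $TT^*T = TT'^*T$ is equivalent to $P_T T' P_{T^*} = T$. Writing $T' = P_{T'}P_{T'^*}$ and using $R(T)\subseteq R(T')$, $R(T^*)\subseteq R(T'^*)$, we get $P_{T'} P_T = P_T$ (since $\overline{R(T)} \subseteq \overline{R(T')}$ forces $P_T \leq P_{T'}$ as orthogonal projections, hence $P_{T'}P_T = P_T$), and similarly $P_{T'^*}P_{T^*} = P_{T^*}$ and their adjoints $P_T P_{T'} = P_T$, $P_{T^*}P_{T'^*} = P_{T^*}$. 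Then
$$
P_T T' P_{T^*} = P_T P_{T'} P_{T'^*} P_{T^*} = P_T P_{T'^*} P_{T^*}.
$$
Now I need $P_T P_{T'^*} P_{T^*} = T = P_T P_{T^*}$. This is the crux: it would follow if $P_T P_{T'^*} P_{T^*} = P_T P_{T^*}$, which is where I expect the main obstacle — one needs to exploit that $T = P_T P_{T^*}$ is itself a product of orthogonal projections and relate $P_{T'^*}$ to $P_{T^*}$ more carefully. One approach: from $T\overset{s}{\leq} T'$ and $T = P_T P_{T^*}$, note $R(T^*) = R(P_{T^*} P_T) \subseteq R(T'^*)$, and the range of $T^*$ is dense in $\overline{R(T^*)}$; combined with $P_{T^*} P_{T'^*} = P_{T^*}$ we should be able to insert $P_{T'^*}$ harmlessly between $P_T$ and $P_{T^*}$ by a density argument, or alternatively observe $P_T P_{T'^*}P_{T^*} - P_T P_{T^*} = P_T(P_{T'^*} - I)P_{T^*} = -P_T(I - P_{T'^*})P_{T^*}$ and show $P_T(I - P_{T'^*})P_{T^*}=0$ using that $N(T^*)^\perp = \overline{R(T)}$ feeds into $R(T'^*)$ appropriately.

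For the closed-range equivalence with condition 3 when $T'$ has closed range: the implication $2 \Rightarrow 3$ is the observation already used above, that $R(T)\subseteq R(T') \iff P_T \leq P_{T'}$ (as noted in the proof of Lemma \ref{orP}), and symmetrically for the adjoints. For $3 \Rightarrow 2$: $P_T \leq P_{T'}$ gives $\overline{R(T)} \subseteq \overline{R(T')} = R(T')$ (closed range), hence $R(T) \subseteq R(T')$; and $P_{T^*}\leq P_{T'^*}$ gives $R(T^*)\subseteq R(T'^*)$ likewise, so $T\overset{s}{\leq} T'$. The closed-range hypothesis on $T'$ is exactly what upgrades the closure inclusions to honest range inclusions. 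I expect the range/closure bookkeeping in the $2\Rightarrow 1$ step to be the delicate part; everything else is routine manipulation with orthogonal projections and the Douglas-type range conditions.
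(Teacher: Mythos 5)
Your argument is essentially the paper's, and it does go through; the only problem is that the step you flag as ``the crux'' and leave unresolved is in fact immediate from an identity you had already written down two sentences earlier. You correctly derived $P_{T'^*}P_{T^*}=P_{T^*}$ from $\overline{R(T^*)}\subseteq\overline{R(T'^*)}$, and that identity alone finishes the job:
\begin{equation*}
P_T\,P_{T'^*}\,P_{T^*}=P_T\bigl(P_{T'^*}P_{T^*}\bigr)=P_T P_{T^*}=T,
\end{equation*}
equivalently $(I-P_{T'^*})P_{T^*}=0$ outright, so no density argument and no finer analysis of $T=P_TP_{T^*}$ is needed --- the second of the two fallback strategies you sketch collapses to a one-line verification. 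With that observed, your proof of $2\Rightarrow 1$ via Lemma 2.2(2) (reducing $TT^*T=T(T')^*T$ to $P_TT'P_{T^*}=T$) is complete and correct. The paper's own proof is the same projection bookkeeping packaged slightly differently: it bypasses Lemma 2.2(2) and computes both sides directly, namely $TT^*T=P_TP_{T^*}\cdot P_{T^*}P_T\cdot P_TP_{T^*}=(P_TP_{T^*})^2=T^2$ and $T(T')^*T=TP_{(T')^*}P_{T'}T=T\cdot T=T^2$, where $TP_{(T')^*}=T$ and $P_{T'}T=T$ follow from the same two absorption identities you use. Neither route is more general than the other. Your treatment of $1\Rightarrow 2$ via Lemma \ref{relation} and of $2\Leftrightarrow 3$ under the closed-range hypothesis (closure inclusions upgrading to range inclusions exactly when $R(T')$, hence $R(T'^*)$, is closed) matches the paper and is fine.
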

\begin{proof}

$1\Rightarrow 2$. Obvious. 

$2\Rightarrow 1$. If $T\overset{s}{\leq} T'$ then $\overline{R(T)}\subseteq \overline{R(T')}$ and $\overline{R(T^*)}\subseteq \overline{R((T')^*)}$. So $TT^*T=P_T P_{T^*}P_{T^*}P_T P_T P_{T^*}=P_T P_{T^*} P_T P_{T^*}=T^2$ and $T(T')^*T=TP_{(T')^*}P_{T'}T=TT=T^2;$ i.e., $TT^*T=T(T')^*T$, and so $T\overset{\diamond}{\leq} T'$.

If $T'\in \cP\cP$ has closed range then $2\Leftrightarrow 3$  follows from the proof of Lemma \ref{orP}.

\end{proof}

Now, consider the set $\cQ\cQ.$ Denote by 	$\QQ:=\{(E,F)\in \cQ\times\cQ: T=EF, R(E)=\overline{R(T)} \; {\rm and} \; N(F)=N(T)\}.$  For characterizations and properties of the sets $\cQ\cQ$ and $\QQ,$ see \cite{MR3669130}. 

\begin{prop}
Let $T, T'\in \cQ\cQ$ and $T'$ with closed range. The following conditions are equivalent:
\begin{enumerate}
\item $T\overset{+}{\leq} T';$
\item for all $(E',F')\in [\cQ\cQ]_{T'}$ there exists $(E,F)\in\QQ$ such that $E \overset{-}{\leq}E'$ and $F\overset{-}{\leq}F'.$ 
\end{enumerate}
\end{prop}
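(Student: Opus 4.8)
The plan is to pass through the Moore--Penrose inverse to reduce the plus order on $\cQ\cQ$ to the diamond order, then to use the already-established equivalence between the diamond order on $\cP\cP$ and the space pre-order (the previous Proposition), and finally to translate back. Recall that for $T\in\cQ\cQ$ with the factorization $T=EF$, $(E,F)\in\QQ$, we have $E\in\cQ$ with $R(E)=\overline{R(T)}$ and $F\in\cQ$ with $N(F)=N(T)$; since $T'$ has closed range, the same holds with closures removed on the primed side, and $T'^\dagger$ is bounded. The first step is to observe that $T^\dagger=F^\dagger E^\dagger\in\cQ\cQ$ as well (this is standard for products of projections: the Moore--Penrose inverse of a product of two projections is again a product of two projections), and more precisely to record how the factors of $T^\dagger$ relate to those of $T$, namely $E^\dagger=P_{E^*}$-type orthogonal projections appear. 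Concretely one shows $T^\dagger = F^\dagger E^\dagger$ with $F^\dagger,E^\dagger$ orthogonal projections when $E,F$ are the canonical (non-oblique) representatives; but since here $E,F$ are merely oblique, the cleaner route is to work directly with the characterization $A\overset{+}{\leq}B$ iff $A\overset{s}{\leq}B$ and $A=\tilde{Q}BQ$ for projections $\tilde Q,Q$.

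The main line of argument I would carry out is: $(1)\Rightarrow(2)$. Assume $T\overset{+}{\leq}T'$. Then $T\overset{s}{\leq}T'$, so $\overline{R(T)}\subseteq R(T')$ and $\overline{R(T^*)}\subseteq R(T'^*)$, and there are projections $\tilde Q,Q$ with $R(\tilde Q)=\overline{R(T)}$, $R(Q^*)=\overline{R(T^*)}$ and $T=\tilde Q T' Q$. Now fix an arbitrary $(E',F')\in[\cQ\cQ]_{T'}$, so $T'=E'F'$, $R(E')=R(T')$, $N(F')=N(T')$. The candidate factors are $E:=\tilde Q E'$ and $F:=F'Q$. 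First I would check $E\in\cQ$ and $F\in\cQ$: $E^2=\tilde Q E'\tilde Q E'$, and one uses $R(\tilde Q)=\overline{R(T)}\subseteq R(T')=R(E')$ to get $E'\tilde Q=\tilde Q$ on the relevant piece... more carefully, $R(E')=R(T')\supseteq\overline{R(T)}=R(\tilde Q)$ gives $E'$ acts as identity on $R(\tilde Q)$, hence $E'\tilde Q x=\tilde Q x$? That is not automatic — $E'$ is an oblique projection onto $R(T')$, so $E'$ restricted to $R(T')$ is the identity, and $R(\tilde Q)\subseteq R(T')$, hence indeed $E'y=y$ for $y\in R(\tilde Q)$, i.e. $E'\tilde Q=\tilde Q$; therefore $E^2=\tilde Q(E'\tilde Q)E'=\tilde Q\tilde Q E'=\tilde Q E'=E$. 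Symmetrically, $N(F')=N(T')\subseteq N(Q^\perp\text{-ish})$; precisely $N(F')=N(T')\subseteq\overline{R(T^*)}^\perp\cup\dots$ — here I use $\overline{R(T^*)}\subseteq R(T'^*)=N(F')^\perp$, equivalently $N(F')\subseteq N(T^*)^\perp{}^\perp=\overline{R(T^*)}^\perp\oplus\dots$; cleanly: $N(Q)=N(T)$? no, we only fixed $R(Q^*)=\overline{R(T^*)}$, which means $N(Q)=\overline{R(T^*)}^\perp$. Then $F'Q F'Q$: since $Q$ acts as identity on $R(Q)$ and $F'Q$ has range in $R(F')$... the symmetric computation $QF'=Q$ follows from $N(F')=N(T')\subseteq N(Q)$? that needs $N(T')\subseteq\overline{R(T^*)}^\perp$, i.e. $R(T^*)^{\perp\perp}\cap\overline{R(T'^*)}$... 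I would verify this from $\overline{R(T^*)}\subseteq R(T'^*)$ giving $N(T')=R(T'^*)^\perp\subseteq\overline{R(T^*)}^\perp=N(Q)$. Good, so $QF'=Q$ on... wait we need $F'Q$: $F'QF'Q = F'(QF')Q$; $QF'x$ — is $QF'=F'$? No. The relation we want is $F'Q F' = F'$, which would follow if $Q$ acts as the identity on $R(F'^*)$... This is getting delicate, which is exactly the expected obstacle — see below. Granting $E,F\in\cQ$, then $EF=\tilde Q E'F'Q=\tilde Q T' Q=T$, and one checks $R(E)=\overline{R(T)}$, $N(F)=N(T)$, so $(E,F)\in\QQ$. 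Finally $E=\tilde Q E'$ with $\tilde Q$ a projection and $R(E)=\overline{R(T)}\subseteq R(E')$ shows $E=\tilde Q E'$ is of the form "projection times $E'$", i.e. $E\overset{-}{\leq}E'$ via $E=\tilde Q E' = E' Q_1$ for a suitable oblique projection $Q_1$; symmetrically $F\overset{-}{\leq}F'$.

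For $(2)\Rightarrow(1)$: given $(E',F')\in[\cQ\cQ]_{T'}$ and $(E,F)\in\QQ$ with $E\overset{-}{\leq}E'$, $F\overset{-}{\leq}F'$, write $E=\tilde Q E'$ and $F=F'Q$ for projections $\tilde Q$ (range $\overline{R(E)}=\overline{R(T)}$) and $Q$ (nullspace $N(F)=N(T)$), using Lemma 2.2(3). Then $T=EF=\tilde Q E'F'Q=\tilde Q T'Q$, and $R(T)=R(E)\subseteq R(E')\subseteq R(T')$ together with the symmetric range inclusion gives $T\overset{s}{\leq}T'$; hence $T\overset{+}{\leq}T'$ by the definition of the plus order. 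I expect the main obstacle to be the bookkeeping in $(1)\Rightarrow(2)$: verifying that the proposed factors $\tilde Q E'$ and $F'Q$ are genuinely idempotent and genuinely land in $\QQ$, since the obliqueness of $E',F',\tilde Q,Q$ means the naive "acts as identity on its range" arguments must be applied on exactly the right subspaces. An alternative, possibly smoother, route for this direction is to invoke Remark 4.11-style reasoning: change the inner products so that $\tilde Q,Q$ become orthogonal, apply Corollary 5.9 ($A\overset{\diamond}{\leq}B\Leftrightarrow A^\dagger\overset{-}{\leq}B^\dagger$) together with the fact that in the new geometry $T^\dagger = F^\dagger E^\dagger$ splits the minus order across factors, and then transport back — but this requires care that "$[\cQ\cQ]_{T'}$" is computed in the original geometry, so I would likely present the direct computation and isolate the idempotency checks as a short preliminary claim.
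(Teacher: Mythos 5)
Your proposal is correct and follows essentially the same route as the paper's proof: the same candidate factors $E=\tilde{Q}E'$ and $F=F'Q$ in $(1)\Rightarrow(2)$, the same use of the range inclusions to justify idempotency and the minus relations on the factors, and the same recombination $T=EF=\tilde{Q}_1E'F'Q_2=\tilde{Q}_1T'Q_2$ in $(2)\Rightarrow(1)$. The one step you flag as delicate, the idempotency of $F'Q$, closes immediately from the relation $QF'=Q$ that you had already correctly derived from $N(F')=N(T')=R((T')^*)^\perp\subseteq \overline{R(T^*)}^\perp=N(Q)$: indeed $(F'Q)^2=F'(QF')Q=F'QQ=F'Q$, and likewise $F'QF'=F'Q$ gives $F\overset{-}{\leq}F'$ directly.
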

\begin{proof}
$1\Rightarrow 2.$ If $T\overset{+}{\leq} T'$ then $\overline{R(T)}\subseteq R(T')$, and $\overline{R(T^*)}\subseteq R((T')^*)$ and there exist two projections $\tilde{Q},Q$ such that $T=\tilde{Q}T'Q.$ Without loss of generality we can assume that $R(\tilde{Q})=\overline{R(T)}$ and $R(Q^*)=\overline{R(T^*)}.$ Let $T'=E'F'=Q_{R(T')//\ese'}Q_{\cW'//N(T')}.$ Then, $\tilde{Q}E'$ is a projection. Indeed, $(\tilde{Q}E')^2=\tilde{Q}E'\tilde{Q}E'=\tilde{Q}E'$ since   $\overline{R(T)}\subseteq R(T')$. Moreover, $R(\tilde{Q}E')=\overline{R(T)}.$ Let $E:=\tilde{Q}E'.$ Analogously, $F:=F'Q$ is a projection with $R(F^*)=\overline{R(T^*)}.$ Therefore, $T=\tilde{Q}T'Q=\tilde{Q}E'F'Q=EF$. Furthermore, as $E=\tilde{Q}E'$, $R(E)\subseteq R(E')$ and $R(E')$ is closed, we get by \cite[Corollary 3.16]{MR3682701} that $E \overset{-}{\leq}E'$. Similarly, $F\overset{-}{\leq}F'.$

$2\Rightarrow 1.$ Let  $(E',F')\in [\cQ\cQ]_{T'}$ and $(E,F)\in\QQ$ such that $ E\overset{-}{\leq}E'$ and $F\overset{-}{\leq}F'.$ Then, from $E \overset{-}{\leq}E'$, there exist $\tilde{Q}_1,\tilde{Q}_2\in\cQ$ such that $E=\tilde{Q}_1E'=E'\tilde{Q}_2.$ Hence, $R(T)\subseteq \overline{R(T)}=R(E)\subseteq R(E')=R(T').$ Similarly, since $F\overset{-}{\leq}F'$, there exist $Q_1,Q_2\in\cQ$ such that $F=Q_1F'=F'Q_2$, and so $R(T^*)\subseteq\overline{R(T^*)}=R(F^*)\subseteq R((F')^*)=R((T')^*).$ Finally, $T=EF=\tilde{Q}_1E'F'Q_2=\tilde{Q}_1T'Q_2$, and so $T\overset{+}{\leq} T'$ as desired. 
\end{proof}

Finally, it is natural to  consider the relation between operator orders and polar decompositions. Consider polar decomposition of $T\in L(\cH, \cK)$, $T =|T^*|V_T$, where $V_T$ is a partial isometry with $N(V_T) = N(T)$.  In \cite{MR1670982} it is proven that $A\overset{*}{\leq}B$ if and only if $|A^*| \overset{*}{\leq}|B^*|$ and $V_A\overset{*}{\leq} V_B.$ Furthermore, if $|A^*| \overset{-}{\leq}|B^*|$ and $V_A\overset{-}{\leq} V_B$ then $A\overset{-}{\leq}B;$ and the converse is false in general; see also \cite{MR1670982}. Here, we complete this analysis by considering  $\overset{\diamond}{\leq}$ and $\overset{+}{\leq}$. We begin with the next lemma:

\begin{lem}\label{isop} The partial orders $\overset{*}{\leq}$, $\overset{-}{\leq}$ and $\overset{\diamond}{\leq}$ lead to the same relation within the class of partial isometries.
\end{lem}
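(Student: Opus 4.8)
The plan is to show that for partial isometries $V, W \in L(\cH, \cK)$ the three relations $V \overset{*}{\leq} W$, $V \overset{-}{\leq} W$ and $V \overset{\diamond}{\leq} W$ are all equivalent. By Lemma \ref{relation} we already have the implications $V \overset{*}{\leq} W \Rightarrow V \overset{-}{\leq} W \Rightarrow V \overset{+}{\leq} W$ and $V \overset{*}{\leq} W \Rightarrow V \overset{\diamond}{\leq} W \Rightarrow V \overset{+}{\leq} W$, so it suffices to close the loop by proving that each of $V \overset{-}{\leq} W$ and $V \overset{\diamond}{\leq} W$ forces $V \overset{*}{\leq} W$. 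The key structural fact I would exploit is that a partial isometry $V$ satisfies $V^*V = P_{V^*}$ and $VV^* = P_V$, the orthogonal projections onto $\overline{R(V^*)}$ and $\overline{R(V)}$ respectively, and likewise for $W$.

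First I would handle $V \overset{\diamond}{\leq} W \Rightarrow V \overset{*}{\leq} W$. By definition $V \overset{\diamond}{\leq} W$ gives $V \overset{s}{\leq} W$ together with $VV^*V = VW^*V$. Since $V$ is a partial isometry, the left-hand side is $VP_{V^*} = V$ (as $V$ acts as an isometry on $\overline{R(V^*)}$ and annihilates its complement), so the diamond condition reads $VW^*V = V$, which by Lemma (the one listing basic properties, part 2) is exactly $P_V W P_{V^*} = V$. Now I would use $R(V) \subseteq R(W)$, hence $\overline{R(V)} \subseteq \overline{R(W)}$, so $P_W P_V = P_V$; similarly $P_W P_{V^*}$-type relations on the domain side from $R(V^*) \subseteq R(W^*)$. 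The task is to upgrade $P_V W P_{V^*} = V$ to $P_V W = V$ and $W P_{V^*} = V$ separately, which is precisely the content of $V \overset{*}{\leq} W$ by part 1 of the basic-properties lemma. I expect this to follow by computing $W^*W$ restricted to the relevant subspaces and using that $W$ is a partial isometry so $W^*W$ is the orthogonal projection $P_{W^*}$; the relation $P_V W P_{V^*} = V$ combined with $V$ being a partial isometry pins down $W$ on $\overline{R(V^*)}$ to agree with $V$, and orthogonality of $P_{V^*}$ handles the complement.

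Next, $V \overset{-}{\leq} W \Rightarrow V \overset{*}{\leq} W$. Here $V = \tilde{Q}W = WQ$ for oblique projections which, by part 3 of the basic-properties lemma, may be taken with $R(\tilde{Q}) = \overline{R(V)}$ and $N(Q) = N(V)$, i.e. $R(Q^*) = \overline{R(V^*)}$. The goal is to show these projections are in fact orthogonal, equivalently that $\tilde{Q} = P_V$ and $Q = P_{V^*}$; then $V = P_V W = W P_{V^*}$ is exactly the star order. I would argue this by using that $W$ is a partial isometry: from $V = WQ$ and the fact that $W$ is isometric on $\overline{R(W^*)}$, one controls $\|Vx\|$ versus $\|Qx\|$ projected appropriately, and a partial-isometry norm computation forces $Q$ to have the same kernel-range geometry as the orthogonal projection; the adjoint relation $V^* = Q^* W^*$ handles $\tilde{Q}$ symmetrically. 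The main obstacle I anticipate is precisely this step: showing an a priori oblique projection is orthogonal. A cleaner route might be to invoke Lemma \ref{innerinv}/Proposition \ref{minusinnerinverse}: since $V, W$ are partial isometries, $W^* \in W[1]$ (indeed $WW^*W = W$) and if $V \overset{-}{\leq} W$ then $W^* \in V[1]$ too, so $VW^*V = V$; but that is the diamond condition, reducing this case to the previous one. I would therefore organize the proof as $V \overset{-}{\leq} W \Rightarrow VW^*V = V \Rightarrow V \overset{\diamond}{\leq} W \Rightarrow V \overset{*}{\leq} W \Rightarrow V \overset{-}{\leq} W$, with the $\overset{\diamond}{\leq} \Rightarrow \overset{*}{\leq}$ implication being the one genuine computation, the rest being immediate from Lemma \ref{relation} and the partial-isometry identities.
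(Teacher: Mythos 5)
Your architecture is sound and closes the loop correctly, but it differs from the paper at both junctures, and one step is left as a sketch rather than a proof. For the minus order, the paper simply cites \cite[Lemma 2]{MR1670982} for $F\overset{*}{\leq}G\Leftrightarrow F\overset{-}{\leq}G$; your reduction via inner inverses is self-contained and cleaner: since $W$ is a partial isometry, $WW^*W=W$ gives $W^*\in W[1]$, and the proof of Proposition \ref{minusinnerinverse} (or Lemma \ref{innerinv}) then yields $VW^*V=V$, which together with $V\overset{s}{\leq}W$ is exactly $V\overset{\diamond}{\leq}W$. That is a genuine improvement in economy, since it reuses machinery already in the paper. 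For the implication $V\overset{\diamond}{\leq}W\Rightarrow V\overset{*}{\leq}W$, which you correctly identify as the one real computation, the paper argues quite differently: it shows that $FG^*$ is simultaneously an idempotent ($FG^*F=F$) and a partial isometry ($FG^*GF^*FG^*=FG^*$, using $G^*G=P_{G^*}$ and $P_{G^*}F^*=F^*$), hence an orthogonal projection, and then identifies $R(FG^*)=R(FF^*)$ to conclude $FG^*=FF^*$; the adjoint argument gives $F^*G=F^*F$, which is the star order.

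Your proposed route for that step --- upgrading $P_VWP_{V^*}=V$ to $WP_{V^*}=V$ and $P_VW=V$ --- is only gestured at (``I expect this to follow by computing $W^*W$ restricted to the relevant subspaces''), and as written it is not yet a proof. It does, however, go through, and the missing computation is short: for $x\in\overline{R(V^*)}\subseteq\overline{R(W^*)}$ one has $\|Wx\|^2=\langle P_{W^*}x,x\rangle=\|x\|^2$, while $\|P_VWx\|=\|P_VWP_{V^*}x\|=\|Vx\|=\|x\|$; since $P_V$ is an \emph{orthogonal} projection, $\|P_VWx\|=\|Wx\|$ forces $Wx\in\overline{R(V)}$, hence $Wx=P_VWx=Vx$ and $WP_{V^*}=V$. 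Taking adjoints in $VW^*V=V$ gives $V^*WV^*=V^*$, so the same argument applied to the partial isometries $V^*,W^*$ yields $W^*P_V=V^*$, i.e.\ $P_VW=V$, and $V\overset{*}{\leq}W$ follows. Note that the orthogonality of $P_V$ is essential here; this is why the analogous statement fails for $\overset{+}{\leq}$, as the paper's example after Lemma \ref{isop} shows. So: correct plan, genuinely different (and in the minus case preferable) route, but you must actually write out the norm argument for the diamond-to-star step before the proof is complete.
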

\begin{proof}
	Let $F,G\in L(\cH, \cK)$ be partial isometries. Assume that $F\overset{\diamond}{\leq}G$. Then, $F=FF^*F=FG^*F$, $R(F)\subseteq R(G)$ and $R(F^*)\subseteq R(G^*).$ Therefore, $(FG^*)^2=FG^*$ and $FG^*(FG^*)^* FG^*=FG^*GF^*FG^*=FP_{G^*}F^*FG^*=FF^*FG^*=FG^*.$ Thus, $FG^*$ is a projection and a partial isometry; and so $FG^*$ is an orthogonal projection. Moreover, $R(FG^*)=R(FG^*GF^*)=R(FP_{G^*}F^*)=R(FF^*)$ and as a consequence $FG^*=FF^*.$ Analogously, from $F^*=F^*FF^*=F^*GF^*$ we get that $F^*G$ is an orthogonal projection with $R(F^*G)=R(F^*F)$, i.e., $F^*G=F^*F$. Summarizing, $GF^*=FF^*$ and $F^*G=F^*F$, and so $F\overset{*}{\leq}G$. The converse implication always holds.
	For the equivalence $F\overset{*}{\leq}G$ if and only if $F\overset{-}{\leq}G$ we referred to  \cite[Lemma 2]{MR1670982}.
\end{proof}

The partial order $\overset{+}{\leq}$ provides a new order relation within the partial isometries. Consider the partial isometries $F=\frac{1}{2}\left(\begin{matrix} \sqrt{2} & 0 \\ \sqrt{2} & 0\end{matrix}\right)$ and $G=\left(\begin{matrix} 0 & 1 \\ 1 & 0\end{matrix}\right),$ and the projections $Q=\left(\begin{matrix} 1/2 & 1/2 \\ 1/2 & 1/2\end{matrix}\right)$ and $\tilde{Q}=\left(\begin{matrix} 1 & 0 \\ \sqrt{2}-1 & 0\end{matrix}\right)$. Clearly, $F=QG\tilde{Q}$, $R(F)\subseteq R(G)$ and $R(F^*)\subseteq R(G^*),$ i.e., $F\overset{+}{\leq}G.$ However, $F\neq FG^*F,$ i.e., $F\overset{\diamond}{\nleq}G.$

\begin{prop}\label{poldiamond} Let $A=|A^*|V_A$ and $B=|B^*|V_B$ be the polar decompositions of $A,B\in L(\cH, \cK)$ respectively, and let $B$ have closed range. If $|A^*|\overset{\diamond}{\leq} |B^*|$ and $V_A\overset{\diamond}{\leq}V_B$ then $A\overset{\diamond}{\leq}B$.

\end{prop}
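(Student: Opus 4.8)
The plan is to translate the hypotheses $|A^*|\overset{\diamond}{\leq}|B^*|$ and $V_A\overset{\diamond}{\leq}V_B$ into the three requirements for $A\overset{\diamond}{\leq}B$: namely $R(A)\subseteq R(B)$, $R(A^*)\subseteq R(B^*)$, and $AA^*A=AB^*A$ (equivalently $P_ABP_{A^*}=A$ by part 2 of Lemma~\ref{shorted}'s preceding lemma). First I would record the easy range inclusions. Since $A=|A^*|V_A$ we have $R(A)\subseteq R(|A^*|)$, and in fact $R(A)=R(|A^*|)$ because $V_A$ is a coisometry on $\overline{R(A^*)}$; similarly $R(A^*)=R(|A|)=R(V_A^*)$. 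From $|A^*|\overset{\diamond}{\leq}|B^*|$ we get $R(|A^*|)\subseteq R(|B^*|)=R(B)$, hence $R(A)\subseteq R(B)$. For the other inclusion note $R(A^*)=R(V_A^*)\subseteq R(V_B^*)$ by $V_A\overset{\diamond}{\leq}V_B$, and $R(V_B^*)=R(B^*)$ since $B$ has closed range (so $V_B$ is a genuine partial isometry with $R(V_B^*)=\overline{R(B^*)}=R(B^*)$); thus $R(A^*)\subseteq R(B^*)$, giving $A\overset{s}{\leq}B$.

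The substance is the equation $AA^*A=AB^*A$. Here I would exploit Lemma~\ref{isop}: since $V_A,V_B$ are partial isometries and $V_A\overset{\diamond}{\leq}V_B$, we actually have $V_A\overset{*}{\leq}V_B$, i.e.\ $V_AV_A^*=V_BV_A^*$ and $V_A^*V_A=V_A^*V_B$. Similarly $|A^*|\overset{\diamond}{\leq}|B^*|$ between positive operators; I would use the characterization $P_{|A^*|}|B^*|P_{|A^*|}=|A^*|$, and note $P_{|A^*|}=P_{A^{*}\!*}=P_{A}$ is the orthogonal projection onto $\overline{R(A)}$ (since $\overline{R(|A^*|)}=\overline{R(A)}$). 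Now compute $AB^*A=|A^*|V_A\,(|B^*|V_B)^*\,|A^*|V_A=|A^*|V_AV_B^*|B^*|\,|A^*|V_A$. Using $V_AV_B^*=(V_BV_A^*)^*=(V_AV_A^*)^*=V_AV_A^*$ from the star relation, this becomes $|A^*|V_AV_A^*|B^*|\,|A^*|V_A$. Since $V_AV_A^*=P_{R(A)}=P_A$ and $P_A|A^*|=|A^*|$, and also $|A^*|P_A=|A^*|$, the middle collapses: $|A^*|P_A|B^*|P_A|A^*|V_A=|A^*|\,(P_A|B^*|P_A)\,|A^*|V_A=|A^*|\,|A^*|\,|A^*|V_A=|A^*|^2 A$. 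Meanwhile $AA^*A=|A^*|V_AV_A^*|A^*|\,|A^*|V_A=|A^*|P_A|A^*|^2V_A=|A^*|^3V_A=|A^*|^2A$ as well. Hence $AA^*A=AB^*A$, completing the proof.

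The main obstacle I anticipate is bookkeeping of the various projections and the precise identity $V_AV_B^*=V_AV_A^*$: one must be careful that $V_A\overset{\diamond}{\leq}V_B$ upgrades to $\overset{*}{\leq}$ (this is exactly Lemma~\ref{isop}, which needs $V_A,V_B$ to be partial isometries — fine for $V_A$ always, and for $V_B$ we use that $B$ has closed range so $V_B$ is a bona fide partial isometry), and that the adjoint of $V_BV_A^*=V_AV_A^*$ gives $V_AV_B^*=V_AV_A^*$. A secondary point to verify carefully is $P_{|A^*|}=P_A$ and the absorption identities $P_A|A^*|=|A^*|=|A^*|P_A$, which hold since $|A^*|$ is positive with closure of range equal to $\overline{R(A)}$. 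Everything else is routine manipulation, and the closed-range hypothesis on $B$ is used precisely to guarantee $R(V_B^*)=R(B^*)$ and that $V_B$ is a partial isometry so that Lemma~\ref{isop} applies.
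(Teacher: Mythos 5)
Your proof is correct and follows essentially the same route as the paper: both arguments use Lemma~\ref{isop} to upgrade $V_A\overset{\diamond}{\leq}V_B$ to $V_A\overset{*}{\leq}V_B$, extract the identity $V_AV_B^*=V_AV_A^*$, and combine it with the modulus hypothesis to collapse $AB^*A$ to $AA^*A$. The only differences are cosmetic (you use $P_A|B^*|P_A=|A^*|$ where the paper uses $|A^*|^3=|A^*||B^*||A^*|$, and you derive $R(A)\subseteq R(B)$ from the modulus hypothesis rather than from $R(V_A)\subseteq R(V_B)$); just note that $R(A^*)=R(V_A^*)$ should be the inclusion $R(A^*)\subseteq R(V_A^*)=\overline{R(A^*)}$, which is all you need.
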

\begin{proof} 
If $V_A\overset{\diamond}{\leq}V_B$ then $R(A)\subseteq \overline{R(A)}=R(V_A)\subseteq R(V_B)=R(B)$ and $R(A^*)\subseteq\overline{R(A^*)}=R(V_A^*)\subseteq R(V_B^*)=R(B^*).$ Furthermore, by Lemma \ref{isop}, $V_A\overset{*}{\leq}V_B;$ i.e., $V_AV_A^*=V_AV_B^*$ and $V_A^*V_A=V_A^*V_B.$ 
On the other hand, if $|A^*|\overset{\diamond}{\leq} |B^*|$ then $|A^*||A^*||A^*|=|A^*||B^*||A^*|.$ Therefore,
$AA^*A=|A^*||A^*||A^*|V_A=|A^*||B^*||A^*|V_A=|A^*|V_AV_A^*|B^*||A^*|V_A=|A^*|V_AV_B^*|B^*||A^*|V_A=AB^*A.$ 

\end{proof}

\begin{rem} The converse of Proposition \ref{poldiamond} does not hold in general. For example, let $A=\left(\begin{matrix} 1 & 0 \\ 1 & 0\end{matrix}\right)$ and $B=\left(\begin{matrix} 0 & 2 \\ 2 & 0\end{matrix}\right)$. Then, $V_A=\frac{1}{2}\left(\begin{matrix} \sqrt{2} & 0 \\ \sqrt{2} & 0\end{matrix}\right)$ and $V_B=\left(\begin{matrix} 0 & 1 \\ 1 & 0\end{matrix}\right).$ A simple computation shows that $A\overset{\diamond}{\leq}B$ but $V_A\overset{\diamond}{\nleq}V_B.$

However, if $A\overset{\diamond}{\leq}B$ and  $V_A\overset{\diamond}{\leq}V_B$ then $|A^*|\overset{\diamond}{\leq} |B^*|$. Indeed, if $V_A\overset{\diamond}{\leq}V_B$ then by Lemma \ref{isop}, $V_AV_A^*=V_AV_B^*$. Now, as $A\overset{\diamond}{\leq}B$,  $|A^*|^3 V_A=AA^*A=AB^*A=|A^*|V_AV_B^*|B^*||A^*|V_A=|A^*|V_AV_A^*|B^*||A^*|V_A=|A^*||B^*||A^*|V_A.$ Postmultiplying by $V_A^*$, we get that $|A^*|^3=|A^*||B^*||A^*|;$ i.e, $|A^*|\overset{\diamond}{\leq} |B^*|$.
\end{rem}

\begin{prop}\label{pola}  Let $A=|A^*|V_A$ and $B=|B^*|V_B$ be the polar decompositions of $A,B\in L(\cH, \cK)$ respectively, and let $B$ with closed range. If $|A^*|\overset{*}{\leq} |B^*|$ and $V_A\overset{+}{\leq}V_B$ then $A\overset{+}{\leq}B$. 
\end{prop}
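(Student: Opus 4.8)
The plan is to mimic the structure of the proof of Proposition \ref{poldiamond}, but to track the weaker hypotheses carefully. First I would unpack what $V_A\overset{+}{\leq}V_B$ gives: since $V_A,V_B$ are partial isometries with closed ranges and $V_A\overset{+}{\leq}V_B$, we have $R(V_A)\subseteq R(V_B)$, $R(V_A^*)\subseteq R(V_B^*)$, and there exist projections $\tilde Q, Q$ with $R(\tilde Q)=\overline{R(V_A)}=R(V_A)$ and $R(Q^*)=\overline{R(V_A^*)}=R(V_A^*)$ such that $V_A=\tilde Q V_B Q$. From $|A^*|\overset{*}{\leq}|B^*|$ (Lemma \ref{orP}-style facts) I would extract $|A^*|\,|A^*|=|A^*|\,|B^*|$ and, by symmetry of the positive operators, $|B^*|\,|A^*|=|A^*|\,|A^*|$ as well; also $R(|A^*|)\subseteq R(|B^*|)$, i.e.\ $\overline{R(A)}\subseteq\overline{R(B)}$.

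The two things to establish for $A\overset{+}{\leq}B$ are: (i) $A\overset{s}{\leq}B$, and (ii) there are projections whose product sandwiches $A$ out of $B$. For (i): $R(A)=R(|A^*|V_A)\subseteq R(|A^*|)\subseteq R(|B^*|)=\overline{R(B)}$, and since $R(A)=R(AA^*)$ is itself the range of $|A^*|$ restricted appropriately — more cleanly, $R(A)=R(|A^*|)=\overline{R(A)}$ need not hold, so I would instead argue $R(A)\subseteq R(B)$ directly: $B$ has closed range, so $R(B)=R(|B^*|)$, and $R(A)\subseteq R(|A^*|)\subseteq R(|B^*|)=R(B)$. For $R(A^*)\subseteq R(B^*)$: $A^*=V_A^*|A^*|$, so $R(A^*)\subseteq R(V_A^*)\subseteq R(V_B^*)=R(B^*)$. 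So $A\overset{s}{\leq}B$ holds.

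For (ii), the natural candidates are built from the polar data. I would set $\tilde Q_A := $ the projection onto $\overline{R(A)}$ along a complement chosen compatibly with $|A^*|\overset{*}{\leq}|B^*|$ — in fact, since $|A^*|\overset{*}{\leq}|B^*|$, by Corollary \ref{order**} we may represent $|B^*|$ as $\mathrm{diag}(|A^*|\!\restriction,\,\ast)$ with respect to $\overline{R(A)}\oplus N(A^*)$ (note $\overline{R(|A^*|)}=\overline{R(A)}$ and $N(|A^*|)=N(A^*)$), which already gives $P_{A^*}|B^*| = |A^*|$ in the appropriate sense, hence $P_{\overline{R(A)}}|B^*| = |A^*| = |B^*|P_{\overline{R(A)}}$. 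Combined with $V_A = \tilde Q V_B Q$, I would compute $A = |A^*|V_A = P_{\overline{R(A)}}|B^*|\,\tilde Q V_B Q$. The key reduction will be to show $P_{\overline{R(A)}}|B^*|\tilde Q = P_{\overline{R(A)}}|B^*|$, i.e.\ that $|B^*|$ maps $N(V_A^*)$ (the complement used by $\tilde Q$) in a way that is killed by $P_{\overline{R(A)}}$ — this follows because $|B^*|P_{\overline{R(A)}}=P_{\overline{R(A)}}|B^*|$ forces $|B^*|$ to reduce $\overline{R(A)}$, so $P_{\overline{R(A)}}|B^*|(I-P_{\overline{R(A)}})=0$. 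Choosing $\tilde Q$ so that $I-\tilde Q$ has range inside $N(A^*)=\overline{R(A)}^\perp$ (legitimate since $R(\tilde Q)=\overline{R(V_A)}=\overline{R(A)}$), we get $P_{\overline{R(A)}}|B^*|\tilde Q = P_{\overline{R(A)}}|B^*|$. Then $A = P_{\overline{R(A)}}|B^*|V_B Q$. Now $P_{\overline{R(A)}}|B^*| = |B^*|P_{\overline{R(A)}}$ and $|B^*|$ is invertible on $\overline{R(B)}$; writing $\hat Q := |B^*| P_{\overline{R(A)}} |B^*|^{-1} P_{\overline{R(B)}}$ (using the Moore–Penrose inverse of $|B^*|$) one checks $\hat Q$ is a projection, and $P_{\overline{R(A)}}|B^*| = \hat Q |B^*|$. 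Hence $A = \hat Q |B^*| V_B Q = \hat Q B Q$, exhibiting $A$ as $\tilde Q_1 B Q_1$ for projections $\tilde Q_1=\hat Q$, $Q_1=Q$. Together with (i), this is exactly $A\overset{+}{\leq}B$.

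The main obstacle I anticipate is the bookkeeping in (ii): making the ranges and complements of $\tilde Q$, $Q$, and the auxiliary projection $\hat Q$ all compatible, and justifying that the operators $|B^*|^\dagger$, $P_{\overline{R(B)}}$, etc.\ combine into an honest bounded \emph{projection} $\hat Q$ — here the closed-range hypothesis on $B$ (equivalently on $|B^*|$) is essential, since it makes $|B^*|^\dagger$ bounded and $|B^*|$ a topological isomorphism of $\overline{R(B)}$. A secondary subtlety is confirming $(I-\tilde Q)$ can be taken with range in $N(A^*)$; this is fine because both $\tilde Q$ (from the plus-order data, adjusted as in the definition of $\overset{+}{\leq}$) and the orthogonal projection $P_{\overline{R(A)}}$ share the range $\overline{R(A)}$, and any two projections with the same range differ only in their complement, which we are free to prescribe. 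Once these are pinned down, the computation $A=\hat Q B Q$ is a short chain of substitutions using $|A^*|=P_{\overline{R(A)}}|B^*|$ and $V_A=\tilde Q V_B Q$.
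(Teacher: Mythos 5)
Your overall strategy is the same as the paper's: use $|A^*|\overset{*}{\leq}|B^*|$ to realize $|A^*|$ as $|B^*|$ compressed to $\overline{R(A)}$, substitute $V_B=|B^*|^\dagger B$, and exhibit the left factor as a projection built from $|B^*|$ (the closed range of $B$ enters exactly where you say, to make $|B^*|^\dagger$ bounded). Part (i) is fine. But your ``key reduction'' $P_{\overline{R(A)}}|B^*|\tilde Q=P_{\overline{R(A)}}|B^*|$ contains a genuine gap. Since $|B^*|$ commutes with $P_{\overline{R(A)}}$ and is injective on $\overline{R(A)}\subseteq R(B)$, and since $P_{\overline{R(A)}}\tilde Q=\tilde Q$ automatically (because $R(\tilde Q)=\overline{R(A)}$), that identity is equivalent to $|B^*|(\tilde Q-P_{\overline{R(A)}})=0$, hence to $\tilde Q=P_{\overline{R(A)}}$. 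You justify this by saying the complement of $\tilde Q$ is something ``we are free to prescribe''; it is not. The plus order provides \emph{some} projection $\tilde Q$ with range $\overline{R(A)}$ satisfying $V_A=\tilde QV_BQ$, and replacing $\tilde Q$ by another projection with the same range but a different nullspace changes the product $\tilde QV_BQ$, so the factorization is lost. Indeed, your $\hat Q=|B^*|P_{\overline{R(A)}}|B^*|^\dagger$ collapses, by the very commutation you established, to $P_{\overline{R(A)}}$, so you would be proving the strictly stronger claim $A=P_{\overline{R(A)}}BQ$, which does not follow from the hypotheses as stated.

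The repair is short and is exactly what the paper does: write $|A^*|=|B^*|P_{\overline{R(A)}}$ (the other order of the commuting product), so that $A=|A^*|V_A=|B^*|P_{\overline{R(A)}}\tilde QV_BQ=|B^*|\tilde QV_BQ$, using only the trivial identity $P_{\overline{R(A)}}\tilde Q=\tilde Q$ and never touching the nullspace of $\tilde Q$. Then $A=\bigl(|B^*|\tilde Q|B^*|^\dagger\bigr)BQ$, and $|B^*|\tilde Q|B^*|^\dagger$ is a bounded idempotent because $|B^*|^\dagger|B^*|=P_{\overline{R(B)}}$ acts as the identity on $R(\tilde Q)=\overline{R(A)}\subseteq R(B)$. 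In short: conjugate the oblique projection $\tilde Q$ itself by $|B^*|$, rather than attempting to replace $\tilde Q$ by the orthogonal projection.
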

\begin{proof}
If $|A^*|\overset{*}{\leq} |B^*|$ then $|A^*|= |B^*|P_A.$ On the other hand, if $V_A\overset{+}{\leq}V_B$ then $R(A)\subseteq\overline{R(A)}=R(V_A)\subseteq R(V_B)=R(B)$ and  $R(A^*)\subseteq\overline{R(A^*)}=R(V_A^*)\subseteq R(V_B^*)=R(B^*)$ and $V_A=QV_B\tilde{Q}$ for some $Q, \tilde{Q}\in \cQ$ and $R(Q)=\overline{R(A)}.$ Hence, $A=|A^*|V_A=|B^*|P_AQV_B\tilde{Q}=|B^*|QV_B\tilde{Q}=|B^*|Q|B^*|^\dagger B\tilde{Q}.$ Now, $|B^*|Q|B^*|^\dagger\in \cQ$  and $\tilde{Q}\in\cQ.$ Thus, $A\overset{+}{\leq}B$. 
\end{proof}



\noindent $^1$ \fontsize {9}{9} \selectfont Instituto Argentino de Matem\'atica  ``Alberto P. Calder\'on''\ (CONICET), Buenos Aires, Argentina\\
\fontsize {9}{9} \selectfont $^2$ Dpto. de Matem\'atica, Facultad de Ingenier\'{i}a, Universidad de Buenos Aires, Buenos Aires, Argentina.\\

\fontsize {9}{9} \selectfont{$^a$  lauraarias@conicet.gov.ar, $^b$ amaestri@fi.uba.ar}

\end{document}